\documentclass[a4paper,10pt]{article}

\usepackage[english]{babel}
\usepackage[T1]{fontenc}
\usepackage{graphicx}
\usepackage{amsmath, amsfonts,amsthm, mathrsfs, amssymb}
\usepackage{verbatim}

\usepackage{enumerate}
\usepackage[latin1]{inputenc}
\usepackage{geometry}








\newcommand{\B}{{\mathbb B}}
\newcommand{\C}{{\mathbb C}}

\newcommand{\E}{{\mathbb E}}
\newcommand{\F}{{\mathbb F}}

\newcommand{\N}{{\mathbb N}}

\newcommand{\R}{{\mathbb R}}

\newcommand{\T}{{\mathbb T}}
\newcommand{\Z}{{\mathbb Z}}

\newcommand{\cA}{{\mathcal A}}

\newcommand{\cF}{{\mathcal F}}

\newcommand{\cL}{{\mathcal L}}
\newcommand{\cM}{{\mathcal M}}

\newcommand{\cO}{{\mathcal O}}

\newcommand{\cR}{{\mathcal R}}
\newcommand{\cS}{{\mathcal S}}
\newcommand{\cT}{{\mathcal T}}
\newcommand{\cU}{{\mathcal U}}

\newcommand{\cY}{{\mathcal Y}}
\newcommand{\cZ}{{\mathcal Z}}


\renewcommand{\d}{\partial}

\newcommand{\grad}{\nabla}


\newcommand{\la}{\left\langle}
\newcommand{\ra}{\right\rangle}


\newcommand{\jap}[1]{\langle #1 \rangle}

\newcommand{\di}{\rm d}
\newcommand{\sleq}{\lesssim}
\newcommand{\sgeq}{\gtrsim}
\newcommand{\nablac}{\jap{\grad}_c}


\numberwithin{equation}{section}
\newtheorem{theorem}{Theorem}[section]
\newtheorem{lemma}[theorem]{Lemma}
\newtheorem{corollary}[theorem]{Corollary}
\newtheorem{proposition}[theorem]{Proposition}

\newtheorem{remark}[theorem]{Remark}


\title{Dynamics of the nonlinear Klein-Gordon equation in the nonrelativistic limit, II}

\author{
 S. Pasquali 
\footnote{
   Dipartimento di Matematica e Fisica, Universit\`a degli Studi Roma Tre, Largo S. Leonardo Murialdo 1, 00146 Roma. 
\newline
 \textit{Email: } \texttt{spasquali@mat.uniroma3.it}
 }
}

\begin{document}

\maketitle

\begin{abstract}
We study the the nonlinear Klein-Gordon (NLKG) equation on a manifold $M$ 
in the nonrelativistic limit, namely as the speed of light $c$ tends to 
infinity. In particular, we consider an order-$r$ normalized approximation 
of NLKG (which corresponds to the NLS at order $r=1$), and prove that when 
$M=\R^d$, $d \geq 2$,  small radiation solutions of the order-$r$ normalized 
equation approximate solutions of the NLKG up to times of order 
$\cO(c^{2(r-1)})$. \\
\emph{Keywords}: nonrelativistic limit, nonlinear Klein-Gordon equation \\
\emph{MSC2010}: 37K55, 70H08, 70K45, 81Q05
\end{abstract}

\tableofcontents

\section{Introduction} \label{intro}

This paper is a continuation of \cite{pasquali2017dynamics}. 
In these two papers the nonlinear Klein-Gordon (NLKG) equation 
in the nonrelativistic limit, namely as the speed of light $c$ 
tends to infinity, is studied. 

The nonrelativistic limit for the Klein-Gordon equation 
on $\R^d$ has been extensively studied over more than 30 years, and
essentially all the known results only show convergence of the
solutions of NLKG to the solutions of the approximate equation for
times of order $\cO(1)$. The typical statement ensures convergence locally
uniformly in time. In a first series of results (see
\cite{tsutsumi1984nonrelativistic}, \cite{najman1990nonrelativistic} and 
\cite{machihara2001nonrelativistic}) it was shown that, 
if the initial data are in a certain smoothness class, 
then the solutions converge in a weaker
topology to the solutions of the approximating equation. These are
informally called ``results with loss of smoothness''. Although  
in this paper a longer time convergence is proved, this result also fills in 
this group.

Recently, Lu and Zhang in \cite{lu2016partially} proved a result 
which concerns the NLKG with a quadratic nonlinearity. 
Here the problem is that the typical scale
over which the standard approach allows to control the dynamics is
$\cO(c^{-1})$, while the dynamics of the approximating equation takes 
place over time scales of order $\cO(1)$. In that work the authors are 
able to use a normal form transformation (in a spirit quite different 
from ours) in order to extend the time of validity of the approximation 
over the $\cO(1)$ time scale. 
We did not try to reproduce or extend that result. \\

In \cite{pasquali2017dynamics} Birkhoff normal form methods were used in 
order to extend the approximation up to order $\cO(1)$ to the NLKG equation 
on $M$, $M$ being a compact smooth manifolds or $\R^d$; when $M=\R^d$, 
$d \geq 2$, the approximation of solutions of the \emph{linear} KG equation 
with solutions of the linearized order-$r$ normalized equation up to times 
of order $\cO(c^{2(r-1)})$ is proved.

In this paper we prove a long-time approximation result for the dynamics of 
the NLKG: we consider the NLKG equation on $\R^d$, $d \geq 2$, and we prove 
that for $r>1$ solutions of the order-$r$ normalized equation approximate 
solutions of the NLKG equation up to times of order $\cO(c^{2(r-1)})$.

\vskip 10pt 

The present paper and \cite{pasquali2017dynamics} can be thought as examples 
in which techniques from canonical perturbation theory are used together with 
results from the theory of dispersive equations in order to understand the 
singular limit of some Hamiltonian PDEs. 
In this context, the nonrelativistic limit of the NLKG is a relevant example.

The issue of nonrelativistic limit has been studied 
also in the more general Maxwell-Klein-Gordon system 
(\cite{bechouche2004nonrelativistic}, \cite{masmoudi2003nonrelativistic}), 
in the Klein-Gordon-Zakharov system (\cite{masmoudi2008energy}, 
\cite{masmoudi2010klein}), 
in the Hartree equation (\cite{cho2006semirelativistic}) and 
in the pseudo-relativistic NLS (\cite{choi2016nonrelativistic}). 
However, all these results proved the convergence of the solutions 
\emph{locally uniformly in time}; 
no information could be obtained about the convergence of solutions for 
longer (in the case of NLKG, that means $c$-dependent) timescales. 
On the other hand, in the recent \cite{han2017long}, which studies the 
nonrelativistic limit of the Vlasov-Maxwell system, the authors were able to 
prove a stability result valid for times which are polynomial in terms of the 
speed of light for solutions which lie in a neighbourhood of stable equilibria 
of the system. \\

Another example of singular perturbation problem that has been studied 
with canonical perturbation theory is the problem of the continuous 
approximation of lattice dynamics (see e.g. \cite{bambusi2006metastability}). 
In the framework of lattice dynamics, the approximation has been justified only 
for the typical time scale of averaging theorems, which corresponds to our 
$\cO(1)$ time scale. Hopefully the methods developed in \cite{pasquali2017dynamics} and in the present paper could allow to extend the time of validity of those results. \\

The paper is organized as follows.
In sect. \ref{results} we state the results of the paper, together with 
some examples and comments. 
In sect. \ref{dispKG} we show Strichartz estimates for the linear KG 
equation on $\R^d$. In sect. \ref{Galavmethod} we recall an abstract result 
from \cite{pasquali2017dynamics}; next, in sect. \ref{NLKGappl} we apply the 
abstract theorem to the real NLKG equation, making some explicit computations 
of the normal form at the first and at the second step. 
In sect. \ref{BNFstudy} we study the properties of the normalized equation, 
namely its dispersive properties in the linear case and its well-posedness 
for solutions with small initial data in the nonlinear case. 
In sect. \ref{longtappr} we discuss the approximation for longer 
timescales: in particular, to deduce the latter we will exploit 
some dispersive properties of the KG equation reported in sect. \ref{dispKG}. \\

\emph{Acknowledgments}.
This work is a revised and extended version of a part of the author's PhD thesis. The author would like to thank his supervisor for the PhD thesis Professor Dario Bambusi. 

The author is supported by the ERC grant ``HamPDEs''.

\section{Statement of the Main Results} \label{results}

The NLKG equation describes the motion of a spinless particle with mass $m>0$. 
Consider first the real NLKG
\begin{align} \label{NLKGeq}
\frac{\hbar^2}{2mc^2} u_{tt} - \frac{\hbar^2}{2m} \Delta u +\frac{mc^2}{2} u + \lambda |u|^{2(l-1)}u &= 0,
\end{align}
where $c>0$ is the speed of light, $\hbar>0$ is the Planck constant, 
$\lambda \in \R$, $l \geq 2$, $c>0$.

In the following $m=1$, $\hbar=1$. 
As anticipated above, one is interested in the behaviour of solutions
as $c\to\infty$. 

First it is convenient to reduce equation
\eqref{NLKGeq} to a first order system, by making the following
symplectic change variables 
\begin{align*}
 \psi &:= \frac{1}{\sqrt{2}} \left[ \left(\frac{\nablac}{c} \right)^{1/2} u - i \left(\frac{c}{\nablac}\right)^{1/2}v \right], \; \; v = u_t/c^2,
\end{align*}
where
\begin{equation}
\label{nablac}
\nablac:=(c^2-\Delta)^{1/2}, 
\end{equation}
which reduces \eqref{NLKGeq} to the form 
\begin{align} \label{dsa}
-i \psi_t &= c \nablac \psi 
+ \frac{\lambda}{2^l} \left( \frac{c}{\nablac} \right)^{1/2} 
\left[ \left( \frac{c}{\nablac} \right)^{1/2} (\psi+\bar\psi) \right]^{2l-1},
\end{align}
which is hamiltonian with Hamiltonian function given by
\begin{align} \label{dsa1}
H(\bar\psi,\psi) &= \la \bar{\psi}, c\jap{\grad}_c\psi \ra 
+ \frac{\lambda}{2l} \int \left[ 
\left( \frac{c}{\nablac} \right)^{1/2} \frac{\psi+\bar\psi}{\sqrt{2}} 
\right]^{2l} \di x.
\end{align}

\indent In the following the notation $a \sleq b$ is used to mean: 
there exists a positive constant $K$ that does not depend on $c$ such that $a \leq Kb$.\\

Before discussing the approximation of the solutions of NLKG with 
NLS-type equations, we describe the general strategy we use to get them. 

Remark that Eq. \eqref{NLKGeq} is Hamiltonian with Hamiltonian function
\eqref{dsa1}. If one divides the Hamiltonian by a factor $c^2$ 
(which corresponds to a rescaling of time) 
and expands in powers of $c^{-2}$ it takes the form 
\begin{equation} \label{Hmi}
\langle\psi,\bar \psi\rangle + \frac{1}{c^2} P_c(\psi,\bar \psi)
\end{equation}
with a suitable funtion $P_c$. 
One can notice that this Hamiltonian is a perturbation of 
$h_0:=\langle\psi,\bar \psi\rangle $,
which is the generator of the standard Gauge transform, and 
which in particular admits a flow that is periodic in time. 
Thus the idea is to exploit canonical perturbation theory in
order to conjugate such a Hamiltonian system to a system in normal
form, up to remainders of order $\cO(c^{-2r})$, for any given $r \geq 1$. 

The problem is that the perturbation $P_c$ has a vector field which 
is small only as an operator extracting derivatives: hence, if one Taylor 
expands $P_c$ and its vector field, the number of derivatives extracted at
each order increases. This situation is typical in singular perturbation
problems, and the price to pay to get a normal form is that the remainder 
of the perturbation turns out to be an operator that extracts a large number 
of derivatives. 

In Sect. \ref{NLKGappl} the normal form equation is explicitly computed 
in the case $r=2$, $l=2$:
\begin{align} \label{fdas2}
-i \psi_t \; &= \;  c^2 \psi - \frac{1}{2} \Delta\psi + \frac{3}{4} \lambda |\psi|^2\psi \nonumber \\
&+ \frac{1}{c^2} \left[ \frac{51}{8} \lambda^2 |\psi|^4\psi + \frac{3}{16} \lambda \left(2|\psi|^2 \, \Delta\psi + \psi^2 \Delta\bar\psi + \Delta(|\psi|^2\bar\psi) \right) - \frac{1}{8} \Delta^2\psi \right],
\end{align}
namely a singular perturbation of a Gauge-transformed
NLS equation. If one, after a gauge transformation, 
only considers the first order terms, one has the NLS.

The standard way to exploit such a ``singular'' normal form is to use it
just to construct some approximate solution of the original system, and
then to apply Gronwall Lemma in order to estimate the difference with
a true solution with the same initial datum (see for example 
\cite{bambusi2002nonlinear}).

This strategy works also here, but it only leads to a control of the
solutions over times of order $\cO(c^2)$. When scaled back to the
physical time, this allows to justify the approximation of the solutions 
of NLKG by solutions of the NLS over time scales of order $\cO(1)$, 
{\it on any manifold} admitting a Littlewood-Paley decomposition 
(such as Riemannian smooth compact manifolds, or $\R^d$; 
see the introduction of \cite{bouclet2010littlewood} and section 2.1 of 
\cite{burq2004strichartz} for the construction 
of Littlewood-Paley decomposition on compact manifolds). 

A similar result has been obtained for the case $M=\T^d$ by Faou and Schratz 
\cite{faou2014asymptotic}, who aimed to construct numerical schemes which 
are robust in the nonrelativistic limit.

The idea one uses here in order to improve the time scale of the result
is that of substituting Gronwall Lemma with a more sophisticated tool,
namely dispersive estimates and the retarded Strichartz estimate. 
This can be done each time one can prove a dispersive or a Strichartz 
estimate for the linearization of equation \eqref{dsa} on the approximate 
solution, uniformly in $c$.
Now we state our result for the approximation of small radiation solutions 
of the NLKG equation.

\begin{theorem} \label{NLKGtoNLSradthm}
Consider \eqref{dsa} on $\R^d$, $d \geq 2$. 
Let $r>1$, and fix $k_1 \gg 1$. 
Assume that $l \geq 2$ and $r < \frac{d}{2}(l-1)$. 
Then $\exists$ $k_0=k_0(r)>0$ such that for any $k \geq k_1$ and 
for any $\sigma>0$ the following holds: 
consider the solution $\psi_{r}$ of the normalized equation 
\eqref{simpleq}, with initial datum $\psi_{r,0} \in H^{k+k_0+\sigma+d/2}$. 
Then there exist $\alpha^\ast:=\alpha^\ast(d,l,r)>0$ 
and there exists  $c^\ast:=c^\ast(r,k) > 1$, 
such that for any $\alpha> \alpha^\ast$ and for any $c > c^\ast$, 
if $\psi_{r,0}$ satisfies
\begin{align*}
\|\psi_{r,0}\|_{H^{k+k_0+\sigma+d/2}} &\sleq c^{-\alpha},
\end{align*}
then
\begin{align*}
\sup_{t\in [0,T]} \|\psi(t)-\psi_r(t)\|_{H^k_x} &\sleq \frac{1}{c^2}, \; \; T \sleq c^{2(r-1)},
\end{align*}
where $\psi(t)$ is the solution of \eqref{NLKG} with initial datum $\psi_{r,0}$.
\end{theorem}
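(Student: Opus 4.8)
\bigskip
\noindent\textbf{Proof proposal.}

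The plan is to combine the $r$-step Birkhoff normal form of Sections~\ref{Galavmethod}--\ref{NLKGappl} with the uniform-in-$c$ dispersive and Strichartz estimates of Section~\ref{dispKG} and the small-data theory of Section~\ref{BNFstudy}, and to replace the Gronwall argument---which only reaches rescaled time $\cO(c^2)$---by a \emph{retarded Strichartz estimate} for the linearization of \eqref{dsa} along the approximate solution. Starting from the rescaled Hamiltonian \eqref{Hmi}, the abstract normalization theorem produces after $r$ steps a canonical map $\cT=\cT_c=\mathrm{Id}+\cO(c^{-2})$, bounded from $H^s$ to $H^{s-k_0}$ with a derivative loss $k_0=k_0(r)$ forced by the singular character of $P_c$, such that in the variable $\widetilde\psi:=\cT^{-1}\psi$ the equation becomes
\begin{equation*}
-\im\,\dt\widetilde\psi \;=\; X_Z(\widetilde\psi)+X_{\cR}(\widetilde\psi),
\end{equation*}
where $X_Z$ is the vector field of the normalized equation \eqref{simpleq} and $\cR$ is $\cO(c^{-2r})$ in the rescaled Hamiltonian, so that in physical time $\|X_{\cR}(\widetilde\psi)\|_{H^k}\sleq c^{-2(r-1)}\|\widetilde\psi\|_{H^{k+k_0}}^{2l-1}$ (a high power of the unknown because $\cR$ starts at degree $\ge 2l$). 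The $k_0+\sigma+d/2$ extra derivatives built into the hypothesis are exactly what is needed to run this normalization and, below, the Strichartz estimates---the $d/2$ providing the room for the Sobolev embedding into the admissible Lebesgue space.

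Since $\psi$ and $\psi_r$ carry the same datum, $\widetilde\psi(0)=\cT^{-1}\psi_{r,0}=\psi_{r,0}+\cO(c^{-2}\|\psi_{r,0}\|^{2l-1})$, and it suffices to control $w:=\widetilde\psi-\psi_r$, which solves
\begin{equation*}
-\im\,\dt w \;=\; L(t)\,w+\cN(w,\psi_r)+X_{\cR}(\widetilde\psi),\qquad \|w(0)\|_{H^k}\sleq c^{-2}\|\psi_{r,0}\|^{2l-1},
\end{equation*}
where $L(t):=c\nablac+V(t)$ is the linearization of $X_Z$ at $\psi_r(t)$---so $V(t)$ is a multiplication-type operator built from $\psi_r(t)$ (including a conjugation-type coupling), hence small and time-decaying since, by Section~\ref{BNFstudy}, $\psi_r$ is globally defined, stays $\cO(c^{-\alpha})$, and disperses---and $\cN$ collects the terms at least quadratic in $w$.

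Next I would show that the propagator generated by $L(t)$ obeys the same Strichartz estimates as $e^{\im tc\nablac}$ \emph{uniformly in $c$}, by a perturbative Duhamel/Cook argument based on the uniform estimates of Section~\ref{dispKG}, using that the space--time norms of $V$ are small on $[0,T]$ with $T\sleq c^{2(r-1)}$. This is where $\|\psi_{r,0}\|_{H^{k+k_0+\sigma+d/2}}\sleq c^{-\alpha}$ and, above all, $r<\tfrac{d}{2}(l-1)$ enter: the latter is precisely the condition under which the dispersive decay of the degree-$(2l-1)$ nonlinearity makes the relevant norms of $\psi_r$ (hence of $V$) summable over a window of length $c^{2(r-1)}$, with constants independent of $c$ and of $T$. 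Applying the retarded Strichartz estimate in the Duhamel formula for $w$, the source contributes $\int_0^T\|X_{\cR}(\widetilde\psi)\|_{H^k}\,\di s\sleq c^{-2(r-1)}\cdot c^{2(r-1)}\cdot c^{-\alpha(2l-1)}(\cdots)=c^{-\alpha(2l-1)}(\cdots)$, the term $\cN(w,\psi_r)$ (superlinear in $w$) is absorbed by a continuity argument on the Strichartz-plus-$L^\infty_tH^k$ norm of $w$, and the initial mismatch is $\cO(c^{-2}\|\psi_{r,0}\|^{2l-1})$; choosing $\alpha^\ast=\alpha^\ast(d,l,r)$ so that all of these are $\sleq c^{-2}$ and $c^\ast=c^\ast(r,k)$ large enough to open the bootstrap yields $\sup_{[0,T]}\|w\|_{H^k}\sleq c^{-2}$. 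Undoing $\cT$---which costs only $\cO(c^{-2})$---gives the theorem.

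The main obstacle is this uniform Strichartz step: one needs estimates for the linearized NLKG operator that are simultaneously \emph{uniform in $c$}---forcing the use of the rescaled dispersive estimates of Section~\ref{dispKG}, since $c\jap{\xi}_c$ is a $c$-dependent dispersion relation, only Schr\"odinger-like at frequencies $\ll c$---and valid on the \emph{long} interval $[0,c^{2(r-1)}]$. The long-time requirement is what pins down the bound $r<\tfrac{d}{2}(l-1)$ (an upper cap on the number of normal-form steps set by the scattering threshold of the degree-$(2l-1)$ nonlinearity in dimension $d$) and what makes the careful accounting of the derivative losses---$k_0$ from the normalization and the further losses in the linearization, against the $\sigma+d/2$ spare derivatives in the hypothesis---genuinely essential rather than cosmetic.
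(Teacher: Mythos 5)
Your proposal follows the same overall architecture as the paper's proof: order-$r$ Birkhoff normal form, a Duhamel representation for the error, the retarded Strichartz estimate in place of Gronwall, smallness of the radiation solution on $[0,T]$ supplied by the modulation-space well-posedness theory (Corollary \ref{LWPBNFr}), and a final $\cO(c^{-2})$ correction from the near-identity transformation \eqref{CTthm}. The one substantive difference is where you place the comparison. The paper pushes the normalized solution \emph{forward}, setting $\psi_a=\cT^{(r)}(\psi_r(c^2t))$, so that $\delta=\psi-\psi_a$ solves an equation whose free part is exactly $c\jap{\grad}_c$; the Duhamel term is then handled with the \emph{free KG} estimates \eqref{strestkg}--\eqref{retstrkg}, whose constants are uniformly controlled in $c$, and the potential $\di P(\psi_a)$ is absorbed via H\"older in time (the key computation being \eqref{estrad}, which trades the factor $T^{\frac{1+\rho}{2+\rho}}\sleq c^{2(r-1)\frac{1+\rho}{2+\rho}}$ against $\|\eta_{rad}\|^{2(l-1)}\sleq c^{-2\alpha(l-1)}$ and is what produces the constraint $\alpha>\frac{r-1}{l-1}$ inside $\alpha^\ast$). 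You instead pull $\psi$ \emph{back} and linearize the normalized flow at $\psi_r$. If your free operator is literally $c\jap{\grad}_c$, as your formula $L(t)=c\jap{\grad}_c+V(t)$ states, this is equivalent to the paper's argument. But if, as your description of $L(t)$ as ``the linearization of $X_Z$'' suggests, the free part is the truncated operator $A_{c,r}$, then the available Strichartz estimates are those for $\cU_r(t)$ in \eqref{strhighschr}--\eqref{retstrhighschr}, whose constants \emph{grow} with positive powers of $c$; the uniformity in $c$ you assert for the perturbed propagator is then not automatic and would have to be bought with a larger $\alpha^\ast$. This is precisely the trap the paper's choice of variables avoids, and you should make the choice explicit. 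Two further minor points: the condition $r<\frac{d}{2}(l-1)$ enters through the contraction argument for the normalized equation (it is $m=2(l-1)>4r/d$ in Proposition \ref{LWPhighordschr}), not through a scattering threshold in the perturbed Strichartz step; and the remainder bookkeeping should read $c^{-2r}\cdot T\sleq c^{-2r}\cdot c^{2(r-1)}=c^{-2}$, using the uniform bound \eqref{Remthm} on $X_{\cR^{(r)}}$ rather than an extra power of the solution.
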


\begin{remark}
The assumption of existence of $\psi_r$ up to times of order $\cO(c^{2(r-1)})$ 
is actually a delicate matter. Equation \eqref{fdas2}, for example, is a 
quasilinear perturbation of a fourth-order Schr\"odinger equation (4NLS).
Even if we restrict to the case $r=2$, the issues of global well-posedness 
and scattering for solutions with large initial data for Eq. \eqref{fdas2} 
have not been solved. 
For solutions with small initial data, on the other hand, there are some papers 
dealing with the local well-posedness of 4NLS (see for example \cite{hao2007well}), and with global well-posedness and scattering of 4NLS (see \cite{ruzhansky2016global}).  
In Sec. \ref{higherordWP} we prove the local well-posedness for times of order 
$\cO(c^{2(r-1)})$ for solutions of the order-$r$ normalized equation with small 
initial data under the assumptions that $l \geq 2$ and $r < \frac{d}{2}(l-1)$.
\end{remark}

\begin{remark}
Just to be explicit, we make some examples of Theorem \ref{NLKGtoNLSradthm}. 
For $M=\R^2$ and a nonlinearity of order $2l$, we can justify the approximation of small radiation solutions up to times of order $\cO(c^{2(r-1)})$, for $r < l-1$. For $M=\R^3$ and a nonlinearity of order $2l$, we can justify the approximation of small radiation solutions up to times of order $\cO(c^{2(r-1)})$, for $r < \frac{3}{2}(l-1)$.

On the other hand, when $\frac{d}{2}(l-1) \leq 2$, we cannot justify the 
approximation over long time scales: examples of such cases are the cubic NLKG 
in $2$, $3$ and $4$ dimensions, or the quintic NLKG in $2$ dimensions.
\end{remark}

Before closing the subsection, we remark that the condition on $r$ 
in Theorem \ref{NLKGtoNLSradthm} depends on the assumption under which we were 
able to prove a well-posedness result for the normalized equation, which in 
turn depends on the approach presented recently in \cite{ruzhansky2016global}; 
we do not exclude that this technical condition could be improved.

\section{Dispersive properties of the Klein-Gordon equation} \label{dispKG}

\indent We briefly recall some classical notion of Fourier 
analysis on $\R^d$. Recall the definition of the space of 
Schwartz (or rapidly decreasing) functions,

\begin{align*}
\cS &:= \{ f \in C^\infty(\R^d,\R) | \sup_{x \in \R^d} (1+|x|^2)^{\alpha/2} |\d^\beta f(x)| < + \infty, \; \; \forall \alpha \in \N^d, \forall \beta \in \N^d \}.
\end{align*}

In the following $\la x \ra:=(1+|x|^2)^{1/2}$. \\
Now, for any $f \in \cS$ the \emph{Fourier transform} of $f$, 
$\cF f:\R^d \to \R$, is defined by the following formula

\begin{align*}
\cF f(\xi) &:= (2\pi)^{-d/2} \int_{\R^d} f(x) e^{-i \la x,\xi \ra}\di x, \; \; \forall \xi \in \R^d,
\end{align*}
where $\la \cdot,\cdot \ra$ denotes the scalar product in $\R^d$.

At the beginning we will obtain Strichartz estimates for the linear equation 
\begin{align} \label{KG}
-i \, \psi_t \, &= \, c\jap{\grad}_c \, \psi, \; \; x \in \R^d. 
\end{align}

\begin{proposition} \label{strlin}
Let $d \geq 2$. For any Schr\"odinger admissible couples $(p,q)$ and $(r,s)$, 
namely such that
\begin{align*}
2 \leq p&,r \leq \infty, \\ 
2 \leq q&,s \leq \frac{2d}{d-2}, \\ 
\frac{2}{p}+\frac{d}{q} =\frac{d}{2}, &\; 
\frac{2}{r}+\frac{d}{s} =\frac{d}{2}, \\
(p,q,d),(r,s,d) &\neq (2,+\infty,2),
\end{align*}
one has
\begin{align} \label{strestkg}
\|  \jap{\grad}_c^{\frac{1}{q}-\frac{1}{p}} \; e^{it \; c\jap{\grad}_c} \; \psi_0 \|_{L^p_t L^q_x} \; &\sleq \; c^{\frac{1}{q}-\frac{1}{p}-\frac{1}{2}} \; \| \jap{\grad}_c^{1/2}  \psi_0\|_{L^2}, 
\end{align}
\begin{align} \label{retstrkg}
\left\|  \jap{\grad}_c^{\frac{1}{q}-\frac{1}{p}} \; \int_0^t e^{i(t-s) \; c\jap{\grad}_c} \; F(s) \; \di s \right\|_{L^p_t L^q_x} \; &\sleq \; c^{\frac{1}{q}-\frac{1}{p}+\frac{1}{s}-\frac{1}{r}-1} \; \| \jap{\grad}_c^{\frac{1}{r}-\frac{1}{s}+1} F \|_{L^{r'}_t L^{s'}_x}. 
\end{align}
\end{proposition}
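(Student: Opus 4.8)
The plan is to transfer the classical Strichartz inequalities for the Klein--Gordon group at speed of light $1$ to the operator $e^{itc\nablac}$ by a scaling argument that trades the parameter $c$ for a dilation in space and in time; the powers of $c$ in \eqref{strestkg}--\eqref{retstrkg} then come out automatically. Set $U_c(t):=e^{itc\nablac}$ and $U(t):=e^{it\jap{\grad}}$ with $\jap{\grad}:=(1-\lap)^{1/2}$. Substituting $\xi=c\zeta$ in the Fourier representation of $U_c(t)\psi_0$ gives, for every $\mu\in\R$, the elementary identity
\[
\bigl(\nablac^{\mu}U_c(t)\psi_0\bigr)(x)=c^{\mu}\bigl(\jap{\grad}^{\mu}U(tc^2)w_0\bigr)(cx),\qquad w_0:=\psi_0(\cdot/c),
\]
the symbol of $\nablac^{\mu}$ becoming $c^{\mu}\jap{\zeta}^{\mu}$ under this substitution. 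Since $(p,q)$ is Schr\"odinger admissible, a change of variables in $x$ and $t$ together with $\tfrac{d}{q}+\tfrac{2}{p}=\tfrac{d}{2}$ collapses the two dilations into a single factor,
\[
\|\nablac^{\mu}U_c(\cdot)\psi_0\|_{L^p_tL^q_x}=c^{\mu-d/2}\,\|\jap{\grad}^{\mu}U(\cdot)w_0\|_{L^p_tL^q_x},
\]
while a direct computation on the Fourier side yields $\|\jap{\grad}^{1/2}w_0\|_{L^2}=c^{(d-1)/2}\|\nablac^{1/2}\psi_0\|_{L^2}$.

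The analytic input is then the (classical) Strichartz estimate for $U(t)$ on $\R^d$: for every Schr\"odinger admissible $(p,q)$ with $(p,q,d)\neq(2,\infty,2)$ one has $\|\jap{\grad}^{a}U(t)f\|_{L^p_tL^q_x}\sleq\|\jap{\grad}^{1/2}f\|_{L^2}$ with $a:=\tfrac1q-\tfrac1p$. I would prove it in the usual way: the Hessian of the symbol $(1+|\xi|^2)^{1/2}$ has one radial eigenvalue $\sim\jap{\xi}^{-3}$ and $d-1$ tangential ones $\sim\jap{\xi}^{-1}$, so its determinant is $\sim\jap{\xi}^{-(d+2)}$; stationary phase gives the frequency-localized dispersive bound $\|U(t)P_Nf\|_{L^\infty}\sleq|t|^{-d/2}\jap{N}^{(d+2)/2}\|P_Nf\|_{L^1}$, which combined with $L^2$ conservation through the Keel--Tao theorem gives $\|U(t)P_Nf\|_{L^p_tL^q_x}\sleq\jap{N}^{\frac{d+2}{2}(\frac12-\frac1q)}\|P_Nf\|_{L^2}$; a Littlewood--Paley square function estimate and Minkowski's inequality (both valid since $p,q\ge2$), with the endpoint $(p,q)=(2,\tfrac{2d}{d-2})$ handled by the usual Lorentz space refinement, then sum this to $\|U(t)f\|_{L^p_tL^q_x}\sleq\|f\|_{H^s}$ with $s:=\tfrac{d+2}{2}(\tfrac12-\tfrac1q)$. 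The key point is the identity $a+s=\tfrac12$, forced by admissibility, which makes the loss on the right come out to be exactly $\jap{\grad}^{1/2}$.

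Assembling the pieces proves \eqref{strestkg}: taking $\mu=a$ above and inserting the Strichartz bound for $U$,
\[
\|\nablac^{a}U_c(\cdot)\psi_0\|_{L^p_tL^q_x}=c^{a-d/2}\|\jap{\grad}^{a}U(\cdot)w_0\|_{L^p_tL^q_x}\sleq c^{a-d/2}\|\jap{\grad}^{1/2}w_0\|_{L^2}=c^{a-d/2}c^{(d-1)/2}\|\nablac^{1/2}\psi_0\|_{L^2},
\]
and $a-\tfrac{d}{2}+\tfrac{d-1}{2}=a-\tfrac12=\tfrac1q-\tfrac1p-\tfrac12$, the exponent of \eqref{strestkg}. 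For \eqref{retstrkg} I would dualize \eqref{strestkg} for the pair $(r,s)$: transposing $g\mapsto\nablac^{1/s-1/r-1/2}U_c(t)g$, which maps $L^2\to L^r_tL^s_x$ with norm $\sleq c^{1/s-1/r-1/2}$, and reinserting the appropriate power of $\nablac$ gives
\[
\Bigl\|\nablac^{1/2}\!\int_{\R}U_c(-\sigma)F(\sigma)\,\di\sigma\Bigr\|_{L^2}\sleq c^{1/s-1/r-1/2}\,\|\nablac^{1/r-1/s+1}F\|_{L^{r'}_tL^{s'}_x}.
\]
Applying \eqref{strestkg} for $(p,q)$ to $\psi_0=\int_{\R}U_c(-\sigma)F(\sigma)\,\di\sigma$ and using $U_c(t)\psi_0=\int_{\R}U_c(t-\sigma)F(\sigma)\,\di\sigma$ then produces the inhomogeneous estimate on the full line, with $c$-power $(\tfrac1q-\tfrac1p-\tfrac12)+(\tfrac1s-\tfrac1r-\tfrac12)=\tfrac1q-\tfrac1p+\tfrac1s-\tfrac1r-1$ and weight $\nablac^{1/r-1/s+1}$ on $F$; the retarded truncation $\int_0^t$ is recovered with the same constant from the Christ--Kiselev lemma as long as $p>r'$, the double-endpoint case being treated directly by the Keel--Tao bilinear argument. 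The main difficulty here is bookkeeping rather than conceptual: one must track how the frequency-dependent loss in the dispersive estimate interacts with the $c$-scaling across the two regimes $|\xi|\sleq c$ (where $U_c$ is essentially the Schr\"odinger group and there is no loss) and $|\xi|\sgeq c$ (where it is wave-like), check that the $\nablac$-weights absorb this transition uniformly in $c$, distinguish homogeneous from inhomogeneous Sobolev norms throughout the scaling, and, as usual, treat the endpoint pair with care, since there both the Littlewood--Paley summation in the second step and the passage from the full-line to the retarded inhomogeneous bound have to go through the Keel--Tao arguments rather than Christ--Kiselev.
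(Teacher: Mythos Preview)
Your approach is correct and is essentially the same as the paper's: reduce to the case $c=1$ by the space--time rescaling $x\mapsto cx$, $t\mapsto c^2t$, which converts $\nablac$ into $\jap{\grad}$ and $U_c(t)$ into $U(t)$, and then invoke the Strichartz estimate for $e^{it\jap{\grad}}$. The paper simply cites that input (the D'Ancona--Fanelli lemma, stated as Lemma~\ref{danfanlemma}) and says ``by a simple scaling argument''; you carry out the scaling explicitly and obtain exactly the right power of~$c$.

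The only real difference is in level of detail. The paper treats the $c=1$ Strichartz bound as a black box, whereas you also sketch its proof via the frequency-localized dispersive estimate $\|U(t)P_Nf\|_{L^\infty}\sleq|t|^{-d/2}\jap{N}^{(d+2)/2}\|P_Nf\|_{L^1}$ and Keel--Tao, checking the bookkeeping identity $a+s=\tfrac12$. You also spell out the derivation of the retarded estimate \eqref{retstrkg} by dualizing the homogeneous one and applying Christ--Kiselev (with the Keel--Tao bilinear argument at the double endpoint), which the paper leaves implicit. Both approaches lead to the same conclusion; your version is more self-contained, the paper's is shorter because it outsources the $c=1$ case to the literature.
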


\begin{proof}
By a simple scaling argument, from the following result reported by D'Ancona-Fanelli in \cite{d2008strichartz} for the operator $\jap{\grad}:=\jap{\grad}_1$ 
(for more details see the proof of Proposition 3.1 in \cite{pasquali2017dynamics}).

\begin{lemma} \label{danfanlemma}
For all $(p,q)$ Schr\"odinger-admissible exponents \\
\[ \|e^{i\tau \; \jap{\grad} } \; \phi_0 \|_{L^p_\tau \; W^{\frac{1}{q}-\frac{1}{p}-\frac{1}{2},q }_y} = \; \|\jap{\grad}^{\frac{1}{q}-\frac{1}{p}-\frac{1}{2} } \; e^{it \; \jap{\grad}} \; \phi_0 \|_{L^p_\tau \; L^q_y} \; \leq \; \|\phi_0\|_{L^2_y}. \] \\
\end{lemma}
\end{proof}

\begin{remark}
By choosing $p=+\infty$ and $q=2$, we get the following a priori estimate for 
finite energy solutions of \eqref{KG},
\begin{align*}
\| c^{1/2} \jap{\grad}_c^{1/2} \; e^{it \; c\jap{\grad}_c} \; \psi_0 \|_{L^\infty_t L^2_x} \; &\sleq \; \| c^{1/2} \jap{\grad}_c^{1/2}  \psi_0\|_{L^2}.
\end{align*}
We also point out that, since the operators $\jap{\grad}$ and $\jap{\grad}_c$ 
commute, the above estimates in the spaces $L^p_tL^q_x$ extend to 
estimates in $L^p_tW^{k,q}_x$ for any $k \geq 0$.
\end{remark}

\section{A Birkhoff Normal Form result} \label{Galavmethod} 

\indent Consider the scale of Banach spaces 
$W^{k,p}(M,\C^n \times \C^n) \ni (\psi,\bar\psi)$  
($k \geq 1$, $1<p<+\infty$, $n \in \N_0$) 
endowed by the standard  symplectic form. 
Having fixed $k$ and $p$, and $U_{k,p} \subset W^{k,p}$ open, 
we define the gradient of $H \in C^\infty(U_{k,p},\R)$ w.r.t. $\bar\psi$ 
as the unique function s.t.
\begin{align*}
\la \grad_{\bar\psi} H ,\bar h \ra &= \di_{\bar\psi}H \bar h, \; \; \forall h \in W^{k,p},
\end{align*}
so that the Hamiltonian vector field of a Hamiltonian 
function H is given by \\
\[ X_H(\psi,\bar\psi)=(i\grad_{\bar\psi}H, \; -i\grad_{\psi}H). \]
The open ball of radius $R$ and center $0$ in $W^{k,p}$ will be denoted by 
$B_{k,p}(R)$. \\

\begin{remark} \label{littlepaley}
Let $k \geq 0$, $1 < p < +\infty$, we now introduce the 
Littlewood-Paley decomposition on the Sobolev space $W^{k,p}=W^{k,p}(\R^d)$ 
(see \cite{taylor2011partial}, Ch. 13.5). \\
\indent In order to do this, define the cutoff operators in $W^{k,p}$ 
in the following way: 
start with a smooth, radial nonnegative function $\phi_0: \R^d \to \R$
such that $\phi_0(\xi) = 1$ for $|\xi| \leq 1/2$, and 
$\phi_0(\xi) = 0$ for $|\xi| \geq 1$; 
then define $\phi_1(\xi):=\phi_0(\xi/2)-\phi_0(\xi)$, and set
\begin{align} \label{litpal}
\phi_j(\xi) &:= \phi_1(2^{1-j}\xi), \; \; j \geq 2.
\end{align}
Then $(\phi_j)_{j \geq 0}$ is a partition of unity,
\begin{align*}
\sum_{j \geq 0} \phi_j(\xi) &= 1.
\end{align*}
Now, for each $j \in \N$ and each $f \in W^{k,2}$, we can define $\phi_j(D)f$ by
\begin{align*}
\cF( \phi_j(D)f )(\xi) := \phi_j(\xi)\cF(f)(\xi).
\end{align*}
It is well known that for $p \in (1,+\infty)$ the map $\Phi:L^p(\R^d) \to L^p(\R^d,l^2)$,
\begin{align*}
\Phi(f) &:= (\phi_j(D)f)_{j \in \N},
\end{align*}
maps $L^p(\R^d)$ isomorphically onto a closed subspace of $L^p(\R^d,l^2)$, and we have compatibility of norms (\cite{taylor2011partial}, Ch. 13.5, (5.45)-(5.46)), 
\begin{align*}
K'_p \| f \|_{L^p} \leq \|\Phi(f)\|_{L^p(\R^d,l^2)} &:= \left\| \left[ \sum_{j \in \N} |\phi_j(D)f|^2 \right]^{1/2} \right\|_{L^p} \leq K_p \|f\|_{L^p},
\end{align*}
and similarly for the $W^{k,p}$-norm, i.e. for any $k>0$ and $p \in (1,+\infty)$
\begin{align} \label{compnorms}
K'_{k,p} \| f \|_{W^{k,p}} \leq \left\| \left[ \sum_{j \in \N} 2^{2jk} |\phi_j(D)f|^2 \right]^{1/2} \right\|_{L^p} \leq K_{k,p} \|f\|_{W^{k,p}}.
\end{align}
We then define the cutoff operator $\Pi_N$ by
\begin{align} \label{cutoff}
 \Pi_N\psi := \sum_{j \leq N}\phi_j(D)\psi.
\end{align}
We point out that the Littlewood-Paley decomposition, along with equality 
\eqref{compnorms}, can be extended to compact manifolds (see \cite{burq2004strichartz}), 
as well as to some particular non-compact manifolds (see \cite{bouclet2010littlewood}).
\end{remark}

\indent Now we consider a Hamiltonian system of the form \\
\begin{equation} \label{absH}
H=h_0+ \epsilon \, h + \epsilon \, F, 
\end{equation}
where $\epsilon>0$ is a parameter. We assume that
\begin{itemize}
\item[PER]  $h_0$ generates a linear periodic flow $\Phi^t$ with period $2\pi$, 
\[ \Phi^{t+2\pi} = \Phi^t \; \; \forall t. \]
We also assume that $\Phi^t$ is analytic from $W^{k,p}$ to itself 
for any $k \geq 1$, and for any $p \in (1,+\infty)$;
\item[INV] for any $k\geq 1$, for any $p \in (1,+\infty)$,  
$\Phi^t$ leaves invariant the space $\Pi_jW^{k,p}$ for any $j\geq0$. 
Furthermore, for any $j \geq 0$ 
\[ \pi_j(D) \circ \Phi^t = \Phi^t \circ \pi_j(D); \]
\item[NF] $h$ is in normal form, namely
\[ h \circ \Phi^t = h. \]
\end{itemize}
Next we assume that both the Hamiltonian and the vector field 
of both $h$ and $F$  admit an asymptotic expansion in $\epsilon$ of the form
\begin{align} \label{Hexp}
h \sim  \sum_{j \geq 1} \epsilon^{j-1} h_j, &\; \; F \sim \sum_{j \geq 1} \epsilon^{j-1} F_j, \\
X_h \sim \sum_{j \geq 1} \epsilon^{j-1} X_{h_j}, &\; \; X_F \sim \sum_{j \geq 1} \epsilon^{j-1} X_{F_j},
\end{align}
and that the following properties are satisfied
\begin{itemize}

\item[HVF] There exists $R^\ast>0$ such that for any $j \geq 1$ 
\begin{itemize}
\item[$\cdot$] $X_{h_j}$ is analytic from $B_{k+2j,p}(R^\ast)$ to $W^{k,p}$;
\item[$\cdot$] $X_{F_j}$ is analytic from $B_{k+2(j-1),p}(R^\ast)$ to $W^{k,p}$.
\end{itemize}
Moreover, for any $r \geq 1$ we have that 
\begin{itemize}
\item[$\cdot$] $X_{h-\sum_{j=1}^r \epsilon^{j-1} h_j}$ is analytic from $B_{k+2(r+1),p}(R^\ast)$ to $W^{k,p}$;
\item[$\cdot$] $X_{F - \sum_{j=1}^r \epsilon^{j-1} F_j}$ is analytic from $B_{k+2r,p}(R^\ast)$ to $W^{k,p}$.
\end{itemize}

\end{itemize}

In \cite{pasquali2017dynamics} we proved the following theorem.

\begin{theorem}[see Theorem 4.3 in \cite{pasquali2017dynamics}] \label{normformgavthm}
Fix $r\geq1$, $R>0$, $k_1\gg 1$, $1<p<+\infty$. 
Consider \eqref{absH}, and assume PER, INV 
, NF and HVF.
Then $\exists$ $k_0=k_0(r)>0$ with the following properties: 
for any $k \geq k_1$ there exists $\epsilon_{r,k,p} \ll 1$ such that 
for any $\epsilon<\epsilon_{r,k,p}$ there exists 
$\cT^{(r)}_\epsilon:B_{k,p}(R) \to B_{k,p}(2R)$ analytic canonical transformation 
such that
\[ H_r := H \circ \cT^{(r)}_\epsilon = h_0 + \sum_{j=1}^r\epsilon^j \cZ_j + \epsilon^{r+1} \; \mathcal{R}^{(r)}, \] \\
where $\cZ_j$ are in normal form, namely
\begin{align} \label{NFthm}
\{\cZ_j,h_0\} &= 0,
\end{align} 
and
\begin{align*} 
\sup_{B_{k+k_0,p}(R)} \|X_{\cZ_{j}}\|_{W^{k,p}} &\leq C_{k,p},
\end{align*} 
\begin{align} \label{Remthm}
\sup_{B_{k+k_0,p}(R)} \|X_{\mathcal{R}^{(r)}}\|_{W^{k,p}} &\leq C_{k,p},
\end{align} 
\begin{align} \label{CTthm}
\sup_{B_{k,p}(R)} \|\cT^{(r)}_\epsilon-id\|_{W^{k,p}} &\leq C_{k,p} \, \epsilon.
\end{align}
In particular, we have that \\
\[ \cZ_1(\psi,\bar\psi) = h_1(\psi,\bar\psi) + \la F_1 \ra(\psi,\bar\psi), \\ \]
where $\la F_1 \ra(\psi,\bar\psi) := \int_0^{2\pi} F_1\circ\Phi^t(\psi,\bar\psi) \frac{\di t}{2\pi}$. \\
\end{theorem}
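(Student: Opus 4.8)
\textbf{Proof proposal for Theorem \ref{normformgavthm}.}

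The plan is to construct the canonical transformation $\cT^{(r)}_\epsilon$ iteratively as a composition of $r$ Lie transforms, each one removing the non-normalized part of the perturbation at the current order in $\epsilon$, exactly as in classical Birkhoff normal form, but keeping careful track of the ``loss of derivatives'' bookkeeping encoded in HVF. First I would set up the $j$-th step: suppose inductively that $H$ has been conjugated to $h_0 + \sum_{i=1}^{j-1}\epsilon^i\cZ_i + \epsilon^j G_j + \epsilon^{j+1}(\ldots)$, where $G_j$ has a vector field analytic from $B_{k+2j,p}(R^\ast)$ to $W^{k,p}$ (the loss $2j$ is dictated by HVF). One seeks a generating Hamiltonian $\chi_j$, solving the homological equation $\{h_0,\chi_j\} + G_j = \cZ_j$, where $\cZ_j := \la G_j\ra$ is the time-average of $G_j$ along $\Phi^t$; since $h_0$ generates the $2\pi$-periodic flow $\Phi^t$ (PER), this is solved explicitly by
\begin{align*}
\chi_j(\psi,\bar\psi) &= \frac{1}{2\pi}\int_0^{2\pi} t\, \bigl( G_j\circ\Phi^t - \cZ_j \bigr)(\psi,\bar\psi)\,\di t,
\end{align*}
which is standard. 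The key point is that $X_{\chi_j}$ inherits the same regularity as $X_{G_j}$, i.e. it is analytic from $B_{k+2j,p}(R^\ast)$ to $W^{k,p}$, because $\Phi^t$ preserves $W^{k,p}$ uniformly in $t$ (PER) and commutes with the Littlewood-Paley projectors (INV); in particular $\cZ_j$ is again in normal form by construction, giving \eqref{NFthm}, and its vector field satisfies the bound in the statement.

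Next I would conjugate by the time-one flow $\Phi_{\chi_j}^1$ of $X_{\chi_j}$. Here one must verify that for $\epsilon$ small the flow exists on the relevant ball and maps $B_{k,p}$ into a slightly larger ball; this is where the derivative loss matters, and is the main technical obstacle. Since $X_{\chi_j}$ loses $2j$ derivatives, $\Phi_{\epsilon^j\chi_j}^1$ does not act on a fixed $W^{k,p}$; one handles this by a standard trick for singular perturbations: work on the scale of spaces $B_{k+2m,p}$, where applying the flow once costs $2j$ derivatives but the prefactor $\epsilon^j$ is available, so choosing $\epsilon$ small enough (depending on $k$, $r$, $p$, $R$) keeps everything in $B_{k,p}(2R)$. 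Expanding $H\circ\Phi_{\epsilon^j\chi_j}^1$ by the Lie series, $H\circ\Phi_{\epsilon^j\chi_j}^1 = \sum_{m\geq 0}\frac{\epsilon^{jm}}{m!}(\mathrm{ad}_{\chi_j})^m H$, one checks that the terms of order $<j$ in $\epsilon$ are unchanged (they Poisson-commute with $h_0$ modulo higher order by the inductive hypothesis), the order-$\epsilon^j$ term becomes $\{h_0,\chi_j\}+G_j=\cZ_j$ in normal form, and everything else is pushed to order $\epsilon^{j+1}$; crucially, one must check that the new order-$(j+1)$ remainder still has a vector field analytic from $B_{k+2(j+1),p}(R^\ast)$ to $W^{k,p}$, using the second part of HVF (the bound on $X_{F-\sum F_i}$ and $X_{h-\sum h_i}$) together with the fact that composing with $\mathrm{ad}_{\chi_j}$ costs at most $2j$ further derivatives. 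This closes the induction. After $r$ steps one sets $\cT^{(r)}_\epsilon := \Phi_{\epsilon\chi_1}^1\circ\cdots\circ\Phi_{\epsilon^r\chi_r}^1$, obtaining $H_r = h_0+\sum_{j=1}^r\epsilon^j\cZ_j + \epsilon^{r+1}\cR^{(r)}$ with $\cR^{(r)}$ satisfying \eqref{Remthm}; the estimate \eqref{CTthm} on $\cT^{(r)}_\epsilon-\mathrm{id}$ follows from the Lie-series estimate since the leading generator $\epsilon\chi_1$ already loses only $2$ derivatives and $\epsilon$ is a small factor.

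Finally, for the explicit identification $\cZ_1 = h_1 + \la F_1\ra$: at the first step $j=1$, the perturbation is $\epsilon(h+F)$, and expanding $h$ and $F$ via \eqref{Hexp} the order-$\epsilon$ part of the vector field is $X_{h_1}+X_{F_1}$; since $h$ is already in normal form (NF), $h\circ\Phi^t = h$, so $\la h_1\ra = h_1$, whence $\cZ_1 = \la h_1+F_1\ra = h_1 + \la F_1\ra$ with $\la F_1\ra(\psi,\bar\psi)=\int_0^{2\pi}F_1\circ\Phi^t(\psi,\bar\psi)\,\frac{\di t}{2\pi}$, as claimed. I expect the main obstacle to be the uniform-in-$\epsilon$ control of the flows $\Phi_{\epsilon^j\chi_j}^1$ on the shifting scale of Sobolev-type spaces $W^{k,p}$ — i.e. rigorously justifying that the derivative loss at each step is compensated by the corresponding power of $\epsilon$ — and the careful bookkeeping showing the remainder at stage $j+1$ genuinely lies in the class required by HVF so that the next step can be performed; all of this was carried out in \cite{pasquali2017dynamics}, and here one only needs to invoke it.
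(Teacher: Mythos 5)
First, note that this paper does not actually prove Theorem \ref{normformgavthm}: it is imported verbatim from Theorem 4.3 of \cite{pasquali2017dynamics}, so the only ``proof'' given here is the citation. Your formal scheme --- iterated homological equations $\{h_0,\chi_j\}+G_j=\la G_j\ra$, the explicit primitive $\chi_j=\frac{1}{2\pi}\int_0^{2\pi}t\,(G_j\circ\Phi^t-\la G_j\ra)\,\di t$, and the identification $\cZ_1=h_1+\la F_1\ra$ via NF --- is the correct algebra and is consistent with what is visible in Remarks \ref{1steprem} and \ref{2steprem} (up to the sign convention: \eqref{homeq1step} is written with $\{\chi_1,h_0\}$ rather than $\{h_0,\chi_1\}$).

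The genuine gap is in the step you yourself flag as the main obstacle, and the fix you propose does not work. If $X_{\chi_j}$ is only analytic from $B_{k+2j,p}(R^\ast)$ to $W^{k,p}$, then the time-one flow $\Phi^1_{\epsilon^j\chi_j}$ is simply not defined on any fixed $W^{k,p}$, and no smallness of $\epsilon$ repairs this: a small coefficient in front of an operator that is unbounded between the relevant spaces does not make its flow bounded (the loss of derivatives is reproduced at every instant of the flow, not once). Saying that ``applying the flow once costs $2j$ derivatives'' conflates a single application of the vector field with the flow, which requires iterating it. This is precisely why the hypotheses include INV and why Remark \ref{littlepaley} sets up the cutoffs $\Pi_N$: the actual argument in \cite{pasquali2017dynamics} is a Galerkin averaging. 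One first truncates to frequencies $\leq N$, where the generating vector fields become bounded with norms growing in $N$, performs the normal form construction there (INV guarantees $h_0$ and its flow respect the truncation), and then chooses $N=N(\epsilon)$ so that both the normal form remainder and the high-frequency tail are $\cO(\epsilon^{r+1})$; the extra smoothness $k_0=k_0(r)$ in the statement is exactly the price of controlling that tail. Your sketch never uses INV in any essential way and offers no mechanism producing $k_0(r)$, so as written the induction cannot be closed; the frequency cutoff (or an equivalent regularization) is the missing idea, not an optional technicality.
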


\subsection{The real nonlinear Klein-Gordon equation} \label{NLKGappl}

We first consider the Hamiltonian of the real non-linear Klein-Gordon equation
with power-type nonlinearity on a smooth manifold $M$ 
($M$ is such the Littlewood-Paley decomposition is well-defined; take, for example, a smooth compact manifold, or $\R^d$). The Hamiltonian is of the form
\begin{align} \label{NLKGham}
 H(u,v) &= \frac{c^2}{2} \la v,v\ra + \frac{1}{2} \la u,\jap{\grad}_c^2u \ra \; + \; \lambda \int \frac{u^{2l}}{2l},
\end{align}
where $\jap{\grad}_c:=(c^2-\Delta)^{1/2}$, $\lambda \in \mathbb{R}$, $l \geq 2$. \\
If we introduce the complex-valued variable 
\begin{align}
 \psi &:= \frac{1}{\sqrt{2}} \left[ \left(\frac{\jap{\grad}_c}{c} \right)^{1/2} u - i \left(\frac{c}{\jap{\grad}_c}\right)^{1/2}v \right], \label{changevar}
\end{align}
(the corresponding symplectic 2-form becomes $i \di\psi \wedge \di\bar\psi$), 
the Hamiltonian \eqref{NLKGham} in the coordinates $(\psi,\bar\psi)$ is
\begin{align} \label{NLKGhamnew}
H(\bar\psi,\psi) &= \la \bar{\psi}, c\jap{\grad}_c\psi \ra 
+ \frac{\lambda}{2l} \int \left[ 
\left( \frac{c}{\jap{\grad}_c} \right)^{1/2} \frac{\psi+\bar\psi}{\sqrt{2}} 
\right]^{2l} \di x.
\end{align}
If we rescale the time by a factor $c^{2}$, the Hamiltonian takes the form 
\eqref{absH}, with $\epsilon = \frac{1}{c^2}$, and 
\begin{align} \label{NLKG}
H(\psi,\bar\psi) &= h_0(\psi,\bar\psi) + \epsilon \, h(\psi,\bar\psi) + \epsilon \, F(\psi,\bar\psi),
\end{align}
where

\begin{align}
h_0(\psi,\bar\psi) &= \la \bar\psi,\psi \ra, \\
h(\psi,\bar\psi) &= \la \bar\psi, \left( c \jap{\grad}_c - c^2 \right)\psi \ra \sim \sum_{j\geq 1}\epsilon^{j-1} \; \la\bar\psi,a_j\Delta^j\psi\ra =: \sum_{j\geq 1}\epsilon^{j-1} h_j(\psi,\bar\psi),  \label{hath} \\
F(\psi,\bar\psi) &= \frac{\lambda}{2^{l+1}l} \int \left[ \left(\frac{c}{\jap{\grad}_c}\right)^{1/2} (\psi+\bar\psi) \right]^{2l} \di x \\
&\sim \frac{\lambda}{2^{l+1}l} \int (\psi+\bar\psi)^{2l} \di x \nonumber \\
&+ \epsilon b_2 \int \left[ (\psi+\bar\psi)^{2l-1}\Delta(\psi+\bar\psi) 
+ \ldots 
+ (\psi+\bar\psi)\Delta((\psi+\bar\psi)^{2l-1}) \right] \di x \nonumber \\
&+ \cO(\epsilon^2) \nonumber \\
&=: \sum_{j \geq 1} \epsilon^{j-1} \, F_j(\psi,\bar\psi), \label{HP}
\end{align}
where $(a_j)_{j \geq 1}$ and $(b_j)_{j \geq 1}$ are real coefficients,
and $F_j(\psi,\bar\psi)$ is a polynomial function of the variables 
$\psi$ and $\bar\psi$ (along with their derivatives) and which admits a
bounded vector field from a neighborhood of the origin in $W^{k+2(j-1),p}$ 
to $W^{k,p}$ for any $1<p<+\infty$. 

This description clearly fits the scheme treated in the previous section,
and one can easily check that assumptions PER, NF and HVF are satisfied. 
Therefore we can apply Theorem \ref{normformgavthm} to the 
Hamiltonian \eqref{NLKG}. \\

\begin{remark} \label{1steprem}
\indent About the normal forms obtained by applying Theorem 
\ref{normformgavthm}, we remark that in the first step (case $r=1$ in 
the statement of the Theorem) the homological equation we get is of the form 
\begin{equation} \label{homeq1step}
\{\chi_1,  h_0 \} + F_1 = \la F_1 \ra, 
\end{equation}
where $F_1(\psi,\bar\psi) = \frac{\lambda}{2^{l+1}l} \int (\psi+\bar\psi)^{2l} \di x$. Hence the transformed Hamiltonian is of the form 
\begin{equation} \label{ham1step}
H_1(\psi,\bar\psi) = h_0(\psi,\bar\psi) + \frac{1}{c^2} \left[ -\frac{1}{2} \la\bar\psi,\Delta\psi\ra + \la F_1 \ra(\psi,\bar\psi) \right] + \frac{1}{c^4} \cR^{(1)}(\psi,\bar\psi),
\end{equation}
where 
\begin{align}
\la F_1 \ra(\psi,\bar\psi) &= \frac{\lambda}{2^{l+1}l} \binom{2l}{l} \int |\psi|^{2l} \; \di x. \label{F1av}
\end{align}
If we neglect the remainder and we derive the corresponding 
equation of motion for the system, we get 
\begin{equation} \label{eqstep1}
 -i \psi_t \; = \psi + \frac{1}{c^2} \left[ -\frac{1}{2} \Delta\psi + \frac{\lambda}{2^{l+1}} \binom{2l}{l} |\psi|^{2(l-1)}\psi \right], \\
\end{equation}
which is the NLS, and the Hamiltonian which generates the canonical 
transformation is given by 
\begin{equation} \label{chi1}
\chi_1(\psi,\bar\psi) = \frac{\lambda}{2^{l+1}l} \sum_{\substack{j=0,\ldots,2l \\ j \neq l}} \frac{1}{i \, 2(l-j)} \binom{2l}{j} \int \psi^{2l-j} \bar\psi^{j} \di x.
\end{equation}
Such computations already appeared in \cite{pasquali2017dynamics}.
\end{remark}

\begin{remark} \label{2steprem}
Now we iterate the construction by passing to the case $r=2$. \\

If we neglect the remainder of order $c^{-6}$, we have that 
\begin{align} 
H \circ \cT^{(1)} &= h_0 + \frac{1}{c^2} h_1 + \frac{1}{c^4} \{\chi_1, h_1\} + \frac{1}{c^4} h_2 + \nonumber \\
&+ \frac{1}{c^2} \la F_1 \ra + \frac{1}{c^4} \{\chi_1, F_1\} + \frac{1}{2c^4} \{\chi_1,\{\chi_1,h_0\}\} + \frac{1}{c^4} F_2 \\
&= h_0 + \frac{1}{c^2} \left[ h_1 + \la F_1\ra \right] + \frac{1}{c^4} \left[ \{\chi_1,h_1\} + h_2 + \{\chi_1,F_1\} + \frac{1}{2} \{ \chi_1, \la F_1 \ra - F_1 \} + F_2 \right],
\end{align}
where $h_1(\psi,\bar\psi) = -\frac{1}{2} \la\bar\psi,\Delta\psi\ra$, and 
$\chi_1$ is of the form \eqref{chi1}. \\

Now we compute the terms of order $\frac{1}{c^4}$. 
\begin{align}
  \{\chi_1, h_1 \} &= \di \chi_1 X_{h_1} =  \frac{\d \chi_1}{\d\psi} \cdot i \frac{\d h_1}{\d\bar\psi} - i \frac{\d \chi_1}{\bar\psi} \frac{\d h_1}{\d\psi} \nonumber \\
&= -\frac{\lambda}{2^{l+3}l} \int \left[ \sum_{\substack{j=0,\ldots,2l-1 \\j \neq l}} \frac{1}{l-j} \binom{2l}{j} (2l-j) \psi^{2l-j-1} \bar\psi^j \right] \, \Delta\psi \; \di x \nonumber \\
&+\frac{\lambda}{2^{l+3}l} \int \left[ \sum_{\substack{j=1,\ldots,2l \\j \neq l}}  \frac{1}{l-j} \binom{2l}{j} j \psi^{2l-j} \bar\psi^{j-1} \right] \, \Delta\bar\psi \; \di x \nonumber \\
&=-\frac{\lambda}{2^{l+3}l} \int \Delta\psi \, \psi^{2l-1} + \Delta\bar{\psi} \, \bar{\psi}^{2l-1} \; \di x \nonumber \\
& -\frac{\lambda}{2^{l+3}l} \int \sum_{\substack{j=1,\ldots,2l-1 \\j \neq l}} \frac{1}{l-j} \binom{2l}{j} \int (2l-j) \psi^{2l-j-1}\bar{\psi}^j \, \Delta\psi - j \psi^{2l-j}\bar{\psi}^{j-1} \, \Delta\bar{\psi} \; \di x, \label{chi1h1}
\end{align}
and since $j \neq l$ in the sum we have that
\begin{align} \label{chi1h1av}
\la \{\chi_1, h_1 \} \ra &= 0.
\end{align}

Next,
\begin{align} \label{h2}
h_2 = -\frac{1}{8} \la \bar\psi,\Delta^2\psi \ra, 
\end{align}

\begin{align}
&\{\chi_1, F_1\} \nonumber \\
 &= \frac{\lambda^2}{2^{2l+3}l^2} \int \left[ \sum_{\substack{ j=0,\ldots,2l-1 \\ j \neq l}} \frac{1}{l-j} \binom{2l}{j} (2l-j) \psi^{2l-j-1}\bar{\psi}^j \right] \left[ \sum_{h=1}^{2l} \binom{2l}{h} h \psi^{2l-h}\bar{\psi}^{h-1} \right] \; \di x  \nonumber \\
&- \frac{\lambda^2}{2^{2l+3}l^2} \int \left[ \sum_{\substack{ j=1,\ldots,2l \\ j \neq l}} \frac{1}{l-j} \binom{2l}{j} j \psi^{l-j}\bar{\psi}^{j-1} \right] \left[\sum_{h=0}^{2l-1} \binom{2l}{h} (2l-h) \psi^{2l-h-1}\bar{\psi}^{h} \right] \; \di x  \nonumber
\end{align}
\begin{align}
&= \frac{\lambda^2}{2^{2l+3}l^2} \sum_{\substack{j,h=1,\ldots,2l-1 \\ j \neq l}} \frac{1}{l-j} \binom{2l}{j} \binom{2l}{h} [(2l-j)h-j(2l-h)] \int \psi^{4l-j-h-1} \bar{\psi}^{j+h-1} \; \di x \nonumber \\
&+ \frac{\lambda^2}{2^{2l+3}l^2} \, 2 \int \psi^{2l-1} \left[ \sum_{h=1}^{2l} \binom{2l}{h} h \psi^{2l-h} \bar{\psi}^{h-1} \right] \; \di x \nonumber \\
&+ \frac{\lambda^2}{2^{2l+3}l^2} \, 2l \int \left[ \sum_{\substack{j=0,\ldots,2l-1 \\ j \neq l}} \frac{1}{l-j} \binom{2l}{j} (2l-j) \psi^{2l-j-1} \bar{\psi}^j \right] \bar{\psi}^{2l-1} \; \di x \nonumber \\
&+ \frac{\lambda^2}{2^{2l+3}l^2} \, 2 \int \bar{\psi}^{2l-1} \left[ \sum_{h=0}^{2l-1} \binom{2l}{h} (2l-h) \psi^{2l-h-1} \bar{\psi}^h \right] \; \di x \nonumber \\
&- \frac{\lambda^2}{2^{2l+3}l^2} \, 2l \int \left[ \sum_{\substack{j=1,\ldots,2l \\ j \neq l}} \frac{1}{l-j} \binom{2l}{j} j \psi^{2l-j}\bar{\psi}^{j-1} \right] \psi^{2l-1} \; \di x, \label{chi1F1}
\end{align}

\begin{align}
\la \{\chi_1, F_1\} \ra &= \lambda^2 K(l) \int |\psi|^{2(2l-1)} \; \di x, \label{chi1F1av} \\
K(l) &:= \frac{1}{2^{2l+3}l^2} \left\{ \left( \sum_{\substack{ j,h=1,\ldots,2l-1 \\ j \neq l \\ j+h=2l }} \frac{1}{l-j} \binom{2l}{j} \binom{2l}{h} [(2l-j)h-j(2l-h)] \right) + 16 l \right\},
\end{align}
where $K(l)>0$ by the conditions on $j$ and $h$ in the sum.

Then,
\begin{align}
&\{ \chi_1, \la F_1 \ra \} \nonumber \\
&= \frac{\lambda^2}{2^{2l+3}l^2} \binom{2l}{l} \int \sum_{\substack{j=0,\ldots,2l-1 \\ j \neq l}} \frac{1}{l-j} \binom{2l}{j} (2l-j)l \, \psi^{2l-j-1} \bar{\psi}^j \psi^l \bar{\psi}^{l-1} \; \di x \nonumber \\
&- \frac{\lambda^2}{2^{2l+3}l^2} \binom{2l}{l} \int \sum_{\substack{j=1,\ldots,2l \\ j \neq l}} \frac{1}{l-j} \binom{2l}{j} jl \, \psi^{2l-j}\bar{\psi}^{j-1} \psi^{l-1} \bar{\psi}^l \; \di x \nonumber \\
&= \frac{\lambda^2}{2^{2l+3}l^2} \binom{2l}{l} \left[ \binom{2l}{l} \, 2 \int \psi^{3l-1} \bar{\psi}^{l-1} +\psi^{l-1} \bar{\psi}^{3l-1} \; \di x + \sum_{\substack{j=1,\ldots,2l-1 \\j \neq l}}2l \binom{2l}{j} \int \psi^{3l-j-1} \bar{\psi}^{j+l-1} \; \di x \right], \label{chi1avF1}
\end{align}
and since $j \neq l$ in the sum we have that
\begin{align}
\la \{ \chi_1, \la F_1 \ra \} \ra &= 0.
\end{align}

Furthermore,
\begin{align}
F_2 &= \frac{\lambda}{2^{l+3}l} \, 2l \int (\psi+\bar\psi)^{2l-1} \, \Delta(\psi+\bar\psi) \; \di x \nonumber \\
&= \frac{\lambda}{2^{l+2}} \sum_{j=0}^{2l-1} \binom{2l-1}{j} \int \psi^{2l-j-1}\bar{\psi}^j (\Delta \psi + \Delta \bar{\psi}) \; \di x, \label{F2}
\end{align}

\begin{align}
\la F_2 \ra &= \frac{\lambda}{2^{l+2}} \int \binom{2l-1}{l} \psi^{l-1}\bar{\psi}^l \Delta\psi + \binom{2l-1}{l-1} \psi^l \bar{\psi}^{l-1} \Delta\bar{\psi} \; \di x \nonumber \\
&= \frac{\lambda}{2^{l+2}} \binom{2l-1}{l} \int |\psi|^{2(l-1)}( \bar{\psi} \Delta\psi + \psi \Delta\bar{\psi} ) \; \di x
\end{align}

Hence, up to a remainder of order $O\left(\frac{1}{c^6}\right)$, we have that
\begin{align} \label{ham2step}
H_2 &= h_0 + \frac{1}{c^2} \int \left[ -\frac{1}{2}  \la \bar\psi,\Delta\psi \ra +  \frac{\lambda}{2^{l+1}l} \binom{2l}{l} |\psi|^{2l} \right] \; \di x  \nonumber\\
&+  \frac{1}{c^4} \int \left[ \lambda^2 K(l) |\psi|^{2(2l-1)} + \frac{\lambda}{2^{l+2}} \binom{2l-1}{l} |\psi|^{2(l-1)}( \bar{\psi} \Delta\psi + \psi \Delta\bar{\psi} ) - \frac{1}{8} \la \bar\psi, \Delta^2\psi \ra \right] \; \di x,
\end{align}
which, by neglecting $h_0$ (that yields only a gauge factor) and 
by rescaling the time, leads to the following equations of motion
\begin{align} \label{eqstep2}
-i \psi_t  &= - \frac{1}{2} \Delta\psi + \frac{\lambda}{2^{l+1}} \binom{2l}{l} |\psi|^{2(l-1)}\psi + \frac{1}{c^2} \left[ - \frac{1}{8} \Delta^2\psi + \lambda^2 K(l) \, (2l-1) |\psi|^{4(l-1)}\psi \right] \nonumber \\
&+ \frac{1}{c^2} \left[ \frac{\lambda}{2^{l+2}} \binom{2l-1}{l} \left(l |\psi|^{2(l-1)} \, \Delta\psi + (l-1) |\psi|^{2(l-2)} \psi^2 \Delta\bar\psi + \Delta(|\psi|^{2(l-1)}\bar\psi) \right) \right],
\end{align}
which for example in the case of a cubic nonlinearity ($l=2$) reads
\begin{align} \label{eqstep2l2}
-i \psi_t \; &= \;  - \frac{1}{2} \Delta\psi + \frac{3}{4} \lambda |\psi|^2\psi \nonumber \\
&+ \frac{1}{c^2} \left[ \frac{51}{8} \lambda^2 |\psi|^4\psi + \frac{3}{16} \lambda \left(2|\psi|^2 \, \Delta\psi + \psi^2 \Delta\bar\psi + \Delta(|\psi|^2\bar\psi) \right) - \frac{1}{8} \Delta^2\psi \right].
\end{align}

Eq. \eqref{eqstep2l2} is the nonlinear analogue of a linear higher-order 
Schr\"odinger equation that appears in \cite{carles2012higher} and 
\cite{carles2015higher} in the context of semi-relativistic equations. 
\end{remark}

\section{Properties of the normal form equation} \label{BNFstudy}

\subsection{Linear case} \label{BNFlinsec}

\indent Now let $r \geq 1$, $d \geq 2$. 
In \cite{carles2012higher} and \cite{carles2015higher} the authors proved 
that the linearized normal form system, namely the one that corresponds 
(up to a rescaling of time by a factor $c^2$) to 
\begin{align} 
-i \dot{\psi_r} &= X_{h_0 + \sum_{j=1}^r \epsilon^j h_j}(\psi_r), \label{schrordr} \\
\psi_r(0) &= \psi_0, \nonumber
\end{align}
admits a unique solution in $L^\infty(\R)H^{k+k_0}(\R^d)$  
(this is a simple application of the properties of the Fourier transform), and 
by a perturbative argument they also proved the global existence also for the 
higher oder Schr\"odinger equation with a bounded time-independent potential.

Moreover, by following the arguments of Theorem 4.1 in \cite{kim2012global} 
and Lemma 4.3 in \cite{carles2015higher} one obtains the following 
dispersive estimates and local-in-time Strichartz estimates 
for solutions of the linearized normal form equation \eqref{schrordr}.

\begin{proposition}
Let $r \geq 1$ and $d \geq 2$, and denote by $\cU_r(t)$ the evolution 
operator of \eqref{schrordr} at the time $c^2t$ ($c \geq 1$, $t>0$). 
Then one has the following local-in-time dispersive estimate 
\begin{align} \label{locdispest}
\| \cU_r(t) \|_{L^1(\R^d) \to L^\infty(\R^d)} &\sleq 
c^{d \left( 1-\frac{1}{r} \right)} |t|^{-d/(2r)}, \; \; 0<|t| \sleq c^{2(r-1)}.
\end{align}
On the other hand, $\cU_r(t)$ is unitary on $L^2(\R^d)$. \\
Now introduce the following set of admissible exponent pairs:
\begin{align} \label{Deltar}
\Delta_r &:= \left\{ (p,q): (1/p,1/q) \; \text{lies in the closed quadrilateral ABCD}\right\},
\end{align}
where 
\[ A=\left(\frac{1}{2},\frac{1}{2}\right), \; \; 
B=\left(1,\frac{1}{\tau_r}\right), \; \; C=(1,0), \; \;
D=\left(\frac{1}{\tau_r'},0\right), \; \; \tau_r = \frac{2r-1}{r-1}, \; \; 
\frac{1}{\tau_r} + \frac{1}{\tau_r'} = 1. \] 
Then for any $(p,q)\in\Delta_r \setminus \{(2,2),(1,\tau_r),(\tau_r',\infty)\}$ 
\begin{align}
\| \cU_r(t) \|_{L^p(\R^d) \to L^q(\R^d)} &\sleq 
c^{d \left( 1-\frac{1}{r} \right) \left( \frac{1}{p}-\frac{1}{q} \right)} |t|^{-\frac{d}{2r} \left( \frac{1}{q}-\frac{1}{p} \right) }, \; \; 0<|t| \sleq c^{2(r-1)}, \label{LpLqhighschr}
\end{align}
\end{proposition}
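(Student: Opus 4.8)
\noindent\emph{Proof strategy.}\ The plan is to make the (polynomial) dispersion relation explicit, rescale the frequency so as to produce a $c$‑independent model operator, and then run the oscillatory‑integral arguments of \cite{kim2012global} and \cite{carles2015higher}, keeping track of the powers of $c$ generated by the rescaling. Concretely, since $h_j=\la\bar\psi,a_j\Delta^j\psi\ra$ with $(-1)^j a_j=\binom{1/2}{j}$, the Fourier symbol of $h_0+\sum_{j=1}^r\epsilon^j h_j$ (with $\epsilon=c^{-2}$) is exactly the degree‑$r$ Taylor polynomial $p_r(|\xi|^2/c^2)=\sum_{j=0}^r\binom{1/2}{j}(|\xi|^2/c^2)^j$ of $\sqrt{1+|\xi|^2/c^2}$, so that, up to the unimodular factor $e^{\im c^2 t}$, $\cU_r(t)$ is the Fourier multiplier $e^{\im c^2 t\,\wtp_r(-\Delta/c^2)}$ with $\wtp_r(v):=\sum_{j=1}^r\binom{1/2}{j}v^j$. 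Since $\wtp_r$ is real, unitarity of $\cU_r(t)$ on $L^2$ is immediate. Performing the substitution $\xi=c\eta$ in the oscillatory‑integral representation of the kernel of $\cU_r(t)$ displays it, up to the phase $e^{\im c^2 t}$, as a dilation by $c$ of the $c$‑free model $e^{\im\sigma\wtp_r(-\Delta)}$ — whose kernel we denote $\wtK_\sigma$ — evaluated at the rescaled time $\sigma:=c^2 t$; hence
\[
\|\cU_r(t)\|_{L^p\to L^q}\;=\;c^{d(1/p-1/q)}\,\big\|e^{\im\sigma\wtp_r(-\Delta)}\big\|_{L^p\to L^q},\qquad \sigma=c^2 t .
\]
Everything thus reduces to $c$‑free bounds for $e^{\im\sigma\wtp_r(-\Delta)}$: the estimate $\|\wtK_\sigma\|_{L^\infty}\sleq|\sigma|^{-d/(2r)}$ becomes $c^d(c^2|t|)^{-d/(2r)}=c^{d(1-1/r)}|t|^{-d/(2r)}$, which is \eqref{locdispest}, and similarly for \eqref{LpLqhighschr}; the admissible window $|t|\sleq c^{2(r-1)}$ corresponds to the range of $\sigma$ over which the model estimates can be run with constants uniform in $c$.

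For the model operator I would decompose $e^{\im\sigma\wtp_r(-\Delta)}=\sum_N e^{\im\sigma\wtp_r(-\Delta)}\phi_N(D)$ into Littlewood--Paley pieces and estimate each kernel $\int_{|\eta|\sim N}e^{\im(x\cdot\eta+\sigma\wtp_r(|\eta|^2))}\,\di\eta$ by stationary phase. The phase being radial, $\nabla^2_\eta[\sigma\wtp_r(|\eta|^2)]$ has transverse eigenvalue $2\sigma\wtp_r'(|\eta|^2)$ (multiplicity $d-1$) and radial eigenvalue $2\sigma[\wtp_r'(|\eta|^2)+2|\eta|^2\wtp_r''(|\eta|^2)]$; on the frequency ranges where neither degenerates one has, using $\wtp_r'(v)\to\tfrac12$ as $v\to0$ and $\wtp_r(v)\sim\binom{1/2}{r}v^r$ as $v\to\infty$, that $|\det\nabla^2|\sim|\sigma|^d\max(1,N)^{2(r-1)d}$, so the piece is $\sleq\min\big(N^d,\,|\sigma|^{-d/2}N^{-(r-1)d}\big)$ and summing those $N$ (crossover at $N_\ast\sim|\sigma|^{-1/(2r)}$) yields $\sleq N_\ast^d\sim|\sigma|^{-d/(2r)}$. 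The real difficulty — and the main obstacle — is that for $r$ even (already for $r=2$, where $\wtp_2(v)=\tfrac12 v-\tfrac18 v^2$) the leading coefficient $\binom{1/2}{r}$ is negative, so $\wtp_r'$, and in general also $v\mapsto\wtp_r'(v)+2v\wtp_r''(v)$, has finitely many positive zeros, located independently of $x$ and $\sigma$: on the corresponding frequency spheres the Hessian degenerates, a caustic of the truncated relativistic dispersion relation appears, and bare stationary phase must be replaced by a van der Corput / uniform oscillatory‑integral estimate adapted to the fold‑ or cusp‑type singularity. This is exactly the point at which the arguments of Theorem 4.1 in \cite{kim2012global} and Lemma 4.3 in \cite{carles2015higher} are invoked, and at which the restriction $0<|t|\sleq c^{2(r-1)}$ and the hypothesis $d\ge2$ enter, precisely to keep those degenerate contributions below the main term $c^{d(1-1/r)}|t|^{-d/(2r)}$ uniformly in $c$.

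Finally, for the $L^p\to L^q$ estimates I would upgrade the pointwise kernel bound to a decay estimate
\[
|\wtK_\sigma(x)|\;\sleq\;|\sigma|^{-d/(2r)}\big(1+|\sigma|^{-1/(2r)}|x|\big)^{-\frac{(r-1)d}{2r-1}},\qquad |x|\gg1,
\]
the exponent coming from the location $|\eta|\sim|x|^{1/(2r-1)}$ of the stationary point of $x\cdot\eta+\sigma\wtp_r(|\eta|^2)$; equivalently, after undoing the scaling, $\wtK_\sigma$ lies in the weak space $L^{\tau_r,\infty}$ with $\tau_r=(2r-1)/(r-1)$, uniformly up to the factor $|\sigma|^{-d/(2r)}$. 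Young's inequality together with the Hardy--Littlewood--Sobolev inequality (needed to reach the boundary of the range) then gives $\|e^{\im\sigma\wtp_r(-\Delta)}\|_{L^p\to L^q}\sleq|\sigma|^{-\frac{d}{2r}(1/p-1/q)}$ for $(1/p,1/q)$ in the closed quadrilateral $\Delta_r$, the three pairs $(2,2)$, $(1,\tau_r)$, $(\tau_r',\infty)$ being excluded exactly because the corresponding weak‑type convolution / Hardy--Littlewood--Sobolev endpoints fail; reinstating the dilation factor $c^{d(1-1/r)(1/p-1/q)}$ recovers \eqref{LpLqhighschr}.
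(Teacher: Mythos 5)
Your setup is correct and, since the paper offers no proof of this Proposition beyond the citation of Theorem 4.1 in \cite{kim2012global} and Lemma 4.3 in \cite{carles2015higher}, your route (identify the symbol as the Taylor polynomial $p_r(|\xi|^2/c^2)$ of $\sqrt{1+|\xi|^2/c^2}$, rescale $\xi=c\eta$ to the $c$-free multiplier $e^{\im\sigma\wtp_r(-\Delta)}$ at $\sigma=c^2t$, and run dyadic stationary phase) is essentially the only one available; the identity $\|\cU_r(t)\|_{L^p\to L^q}=c^{d(1/p-1/q)}\|e^{\im\sigma\wtp_r(-\Delta)}\|_{L^p\to L^q}$ and the non-degenerate dyadic count with crossover at $N_\ast\sim|\sigma|^{-1/(2r)}$ are both right.

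The gap is the step you explicitly defer: the degenerate spheres, and your stated reason for why they are harmless does not survive inspection. First, after your own rescaling the model operator is entirely $c$-free, so the window $0<|t|\sleq c^{2(r-1)}$, i.e.\ $0<|\sigma|\sleq c^{2r}$, cannot ``correspond to the range of $\sigma$ over which the model estimates can be run with constants uniform in $c$'': as $c\to\infty$ it exhausts all of $\sigma>0$, so what you must actually prove is $\|\wtK_\sigma\|_{L^\infty}\sleq|\sigma|^{-d/(2r)}$ for every $\sigma\neq0$. Second, that bound fails at the degenerate sphere when $d>r$. Take $r=2$: $\wtp_2'(v)=\tfrac12-\tfrac14v$ vanishes at $v=2$, and at $x=0$ the kernel restricted to an annulus around $|\eta|=\sqrt2$ reduces to the one-dimensional integral $\int\chi(\rho)\rho^{d-1}e^{\im\sigma\wtp_2(\rho^2)}\,\di\rho$, whose phase has a non-degenerate critical point at $\rho=\sqrt2$ (second derivative $-2$); stationary phase gives a contribution of exact size $|\sigma|^{-1/2}$, with no gain from the $d-1$ tangential directions because the integrand is radial at $x=0$. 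For $d>r$ and $|\sigma|\geq1$ (i.e.\ $|t|\gtrsim c^{-2}$, well inside the claimed window) one has $|\sigma|^{-1/2}\gg|\sigma|^{-d/(2r)}$; undoing the scaling, the sphere contributes $\sim c^{d-1}|t|^{-1/2}$ against the claimed $c^{d(1-1/r)}|t|^{-d/(2r)}$, and these are compatible only for $|t|\sleq c^{-2}$. This is consistent with the paper's own computation in Section \ref{tdecsubsec}, where the medium frequencies of $\cU_2(t)$ receive the separate, weaker bound \eqref{dispestmed} (and \eqref{dispestrmed} in general) instead of being absorbed into $c^{d/2}|t|^{-d/4}$. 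So the van der Corput step is not a technicality to be outsourced: either the estimate must exclude a neighbourhood of the zero sets of $\wtp_r'$ and of $\wtp_r'+2v\wtp_r''$ (which are nonempty already for $r=2$), or the statement must carry an extra medium-frequency term as in \eqref{dispestrlow}--\eqref{dispestrhigh}; as written, your argument cannot close, and the same defect propagates into the weak-$L^{\tau_r}$ kernel bound underlying your $L^p\to L^q$ claims.
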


Let $r \geq 1$ and $d \geq 2$: in the following lemma $(p,q)$ is called an 
order-$r$ admissible pair when $2 \leq p,q \leq +\infty$ for 
$r \geq 2$ ($2 \leq q \leq 2d/(d-2)$ for $r=1$), and
\begin{align} \label{admhighschr}
\frac{2}{p} + \frac{d}{rq} &= \frac{d}{2r}.
\end{align}

\begin{proposition}
Let $r \geq 1$  and $d \geq 2$, and denote by $\cU_r(t)$ the evolution operator 
of \eqref{schrordr} at the time $c^2t$ ($c \geq 1$, $t>0$). 
Let $(p,q)$ and $(a,b)$ be order-$r$ admissible pairs, then 
for any $T \sleq c^{2(r-1)}$
\begin{align}
\| \cU_r(t)\phi_0 \|_{L^p([0,T])L^q(\R^d)} &\sleq 
c^{ d \left( 1-\frac{1}{r} \right) \left(\frac{1}{2} -\frac{1}{q} \right) } \|\phi_0\|_{L^2(\R^d)} 
= c^{ \left(1-\frac{1}{r}\right) \frac{2r}{p} } \|\phi_0\|_{L^2(\R^d)}, \label{strhighschr} \\
\left\| \int_0^t \cU_r(t-\tau)\phi(\tau) \di\tau \right\|_{L^p([0,T])L^q(\R^d)} &\sleq c^{\left(1-\frac{1}{r}\right)2r\left(\frac{1}{p}+\frac{1}{a}\right)} \|\phi\|_{L^{a'}([0,T])L^{b'}(\R^d)}. \label{retstrhighschr}
\end{align}
\end{proposition}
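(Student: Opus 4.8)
\emph{Proof strategy.} The plan is to feed the abstract Strichartz machinery with the two ingredients already at hand: the local-in-time dispersive bound \eqref{locdispest} and the $L^2$-unitarity of $\cU_r(t)$. Concretely, I would run the $TT^\ast$ argument, closing it with the one-dimensional Hardy--Littlewood--Sobolev inequality in the non-endpoint range and with the Keel--Tao bilinear/dyadic argument at the endpoint. The two features that distinguish the present situation from the textbook one are that the dispersive estimate carries the prefactor $c^{d(1-1/r)}$ and is valid only on the window $0<|t|\sleq c^{2(r-1)}$. The observation that makes the time truncation harmless is that every space-time norm is taken over $[0,T]$ with $T\sleq c^{2(r-1)}$, so for $t,s\in[0,T]$ one has $|t-s|\le T\sleq c^{2(r-1)}$ automatically; thus the dispersive bound is available on the whole range of time differences that ever occurs, the abstract scheme applies on $[0,T]$ without modification, and the only real work is bookkeeping the powers of $c$.

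\emph{Homogeneous estimate.} First I would establish \eqref{strhighschr}. Using that $\cU_r(t)$ is a unitary group, so $\cU_r(t)\cU_r(s)^\ast=\cU_r(t-s)$, and interpolating the unitarity bound against \eqref{locdispest}, one gets for $0<|t-s|\sleq c^{2(r-1)}$
\[
\|\cU_r(t-s)\|_{L^{q'}(\R^d)\to L^q(\R^d)}\sleq c^{d(1-1/r)(1-2/q)}\,|t-s|^{-\frac{d}{2r}(1-2/q)}.
\]
With $Tf(t):=\cU_r(t)f$ on $[0,T]$ one has $\|Tf\|_{L^p([0,T])L^q}^2=\|TT^\ast\|_{L^{p'}L^{q'}\to L^pL^q}$, and $TT^\ast F(t)=\int_0^T\cU_r(t-s)F(s)\,ds$ with each slice bounded $L^{q'}\to L^q$ by the right-hand side above. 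The order-$r$ admissibility relation \eqref{admhighschr} is exactly $\frac{d}{2r}(1-2/q)=2/p$, so for $p>2$ Hardy--Littlewood--Sobolev in the time variable closes the estimate and yields $\|\cU_r(t)f\|_{L^p([0,T])L^q}\sleq c^{d(1-1/r)(1/2-1/q)}\|f\|_{L^2}$; multiplying \eqref{admhighschr} by $r$ shows $\frac{2r}{p}=d(1/2-1/q)$, which is the second, equivalent form of the prefactor. The endpoint $p=2$ (which occurs only for $d>2r$; the forbidden Keel--Tao pair $(2,\infty)$ appears precisely when $d=2r$ and should be excluded, as in the preceding proposition) would be handled by the Keel--Tao dyadic splitting of $|t-s|$, which goes through verbatim because only the scales $2^j\sleq c^{2(r-1)}$ ever contribute.

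\emph{Inhomogeneous estimate.} For \eqref{retstrhighschr} I would first prove the non-retarded version, with $\int_0^T$ in place of $\int_0^t$: factoring $\int_0^T\cU_r(t-\tau)\phi(\tau)\,d\tau=\cU_r(t)\!\int_0^T\cU_r(-\tau)\phi(\tau)\,d\tau$, the dual of \eqref{strhighschr} for the pair $(a,b)$ bounds the inner $L^2$ norm by $c^{(1-1/r)2r/a}\|\phi\|_{L^{a'}([0,T])L^{b'}}$, and applying \eqref{strhighschr} for $(p,q)$ to this datum produces the claimed exponent $c^{(1-1/r)2r(1/p+1/a)}$. Passing from $\int_0^T$ to the retarded integral $\int_0^t$ is then done by the Christ--Kiselev lemma, which applies whenever $a'<p$; at the doubly-endpoint case $p=a=2$ one would invoke the Keel--Tao inhomogeneous estimate directly. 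Throughout, all $\tau$-integrations are over $[0,T]\subset\{|\tau|\sleq c^{2(r-1)}\}$, so the dispersive input stays legitimate.

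\emph{Main difficulty.} The $c$-power bookkeeping is routine; the delicate part is the endpoint $p=2$ (and the associated off-diagonal endpoints for the inhomogeneous estimate), where Hardy--Littlewood--Sobolev is unavailable and one must run the full Keel--Tao bilinear interpolation, verifying that neither the time truncation nor the $c$-dependent prefactor interferes with the dyadic summation — they do not, exactly because $T\sleq c^{2(r-1)}$ confines the relevant time differences to the range where \eqref{locdispest} holds. A minor point is to state the admissibility exclusions precisely, removing $(2,\infty)$ (for both $(p,q)$ and $(a,b)$) when $d=2r$, in parallel with the exclusions already present in the previous proposition.
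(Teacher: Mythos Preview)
Your proposal is correct and follows the standard $TT^\ast$/Hardy--Littlewood--Sobolev route (with Keel--Tao at the endpoint and Christ--Kiselev for the retarded term), which is precisely the approach the paper relies on by citing \cite{kim2012global} and \cite{carles2015higher} rather than writing out a proof. Your observation that the time restriction $T\sleq c^{2(r-1)}$ makes the local-in-time dispersive bound \eqref{locdispest} available on the entire range of time differences is exactly the point, and your $c$-power bookkeeping matches the stated exponents.
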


\subsection{Well-posedness of higher order nonlinear Schr\"odinger equations with small data} \label{higherordWP}

Here we discuss the local well-posedness of 
\begin{align} \label{highordschr}
-i \psi_t &= A_{c,r} \psi + P( (\d^\alpha_x\psi)_{|\alpha| \leq 2(r-1)},(\d^\alpha_x\bar\psi)_{|\alpha| \leq 2(r-1)} ), \; \; t \in I, \; x \in \R^d,\\
\psi(0,x) &= \psi_0(x),
\end{align}
where $r \geq 2$, $I:=[0,T]$, $T>0$,
\begin{align*}
A_{c,r} &= c^2-\sum_{j=1}^r \frac{\Delta^j}{c^{2(j-1)}}, \; \;  c \geq 1,
\end{align*}
and $P$ is an analytic function at the origin of the form
\begin{align}
P(z) &= \sum_{m+1 \leq |\beta| < M} a_\beta z^\beta, \; \; |a_\beta| \leq K^{|\beta|}, \; |z| \ll 1,
\end{align}
where $M > m \geq 2$, $m,M \in \N$.

We will exploit this result during the proof of Theorem \ref{NLKGtoNLSradthm}. We will adapt an argument of \cite{ruzhansky2016global} in order to show the local well-posedness of Eq. for data with small norm in the so-called modulation spaces.

Modulation spaces $M^s_{p,q}$ ($s \in \R$, $0 < p,q < +\infty$) were introduced by Feichtinger, and they can be seen as a variant of Besov spaces, in the sense that they allow to perform a frequency decomposition of operators, and to study their properties with respect to lower and higher frequencies. This spaces were recently used in order to prove global well-posedness and scattering for small data for nonlinear dispersive PDEs, especially in the case of derivative nonlinearities (see for example \cite{wang2007global}, \cite{wang2009global} and \cite{ruzhansky2016global}).
We refer to \cite{ruzhansky2012modulation} for a survey about modulation spaces and nonlinear evolution equations.

We define the norm on modulation spaces via the following decomposition: let $\sigma:\R^d \to \R$ be a function such that 
\begin{align*}
supp(\sigma) &\subset [-3/4,3/4]^d,
\end{align*}
and consider a function sequence $(\sigma_k)_{k \in \Z^d}$ satysfying
\begin{align}
\sigma_k(\cdot) &= \sigma(\cdot-k), \label{decpr1} \\
\sum_{k \in \Z^d} \sigma_k(\xi) &= 1, \; \; \forall \xi \in \R^d. \label{decpr2}
\end{align}
Denote by
\begin{align*}
\cY_d := \{ (\sigma_k)_{k \in \Z^d}: (\sigma_k)_{k \in \Z^d} \text{satisfies} \eqref{decpr1}-\eqref{decpr2} \}.
\end{align*} 
Let $(\sigma_k)_{k \in \Z^d} \in \cY_d$, and define the frequency-uniform decomposition operators
\begin{align} \label{frdecop}
\square_k := \cF^{-1}\sigma_k\cF,
\end{align}
where by $\cF$ we denote the Fourier transform on $\R^d$, then we define the 
modulation spaces $M^s_{p,q}(\R^d)$ via the following norm,
\begin{align} \label{modspacenorm}
\|f\|_{M^s_{p,q}(\R^d)} := \left( \sum_{k \in \Z^d} \la k \ra^{sq} \|\square_k f\|_p^q \right)^{1/q}, \; \; s \in \R, 0< p,q < +\infty.
\end{align}
Actually, in our application we will always be interested in the spaces $M^s_{p,1}(\R^d)$ with $s \in \R$ and $p>1$. We just mention some properties of modulation spaces.

\begin{proposition} \label{modspaceprop}
Let $s,s_1,s_2 \in \R$ and $1 < p,p_1,p_2 < +\infty$.
\begin{enumerate}
\item $M^s_{p,1}(\R^d)$ is a Banach space; \\
\item $\cS(\R^d) \subset M^s_{p,1}(\R^d) \subset \cS'(\R^d)$; \\
\item $\cS(\R^d)$ is dense in $M^s_{p,1}(\R^d)$; \\
\item if $s_2 \leq s_1$ and $p_1 \leq p_2$, then $M^{s_1}_{p_1,1} \subseteq M^{s_2}_{p_2,1}$; \\
\item $M^0_{p,1}(\R^d) \subseteq L^\infty(\R^d) \cap L^p(\R^d)$;
\item let $\tau(p) = max\left( 0, d(1-1/p), d/p \right)$ and $s_1 > s_2 + \tau(p)$, then $W^{s_1,p}(\R^d) \subset M^{s_2}_{p,1}(\R^d)$; \\
\item let $s_1 \geq s_2$, then $M^{s_1}_{p,1}(\R^d) \subset W^{s_2,p}(\R^d)$.
\end{enumerate}
\end{proposition}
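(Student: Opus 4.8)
The plan is to deduce all seven statements from two elementary facts about the frequency-uniform pieces $\square_k$: first, each symbol $\sigma_k(\cdot)=\sigma(\cdot-k)$ is supported in the fixed-size cube $k+[-3/4,3/4]^d$, so Bernstein's inequality $\|\square_k g\|_{L^{p_2}}\lesssim\|\square_k g\|_{L^{p_1}}$ holds uniformly in $k$ for $p_1\le p_2$; second, for every $f\in\cS'$ one has the reconstruction identity $f=\sum_k\square_k f$ with convergence in $\cS'$, since $\sum_k\sigma_k\equiv1$. Claims (1)--(3) are then soft functional analysis: completeness follows by identifying $M^s_{p,1}$, via $f\mapsto(\square_k f)_{k}$, with a closed subspace of the weighted sequence space of $\Z^d$-indexed families in $L^p$ with $\ell^1\!\big(\langle k\rangle^s\big)$-summable norms, and checking that any limiting family $(g_k)$ is consistent, i.e.\ $g_k=\square_k f$ for $f:=\sum_k g_k$. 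For (2), $\cS\subset M^s_{p,1}$ because $\|\square_k\phi\|_{L^p}$ decays faster than any power of $|k|$ when $\phi\in\cS$, while $M^s_{p,1}\subset\cS'$ is obtained by pairing the reconstruction series against a Schwartz function $\phi$, inserting a fattened cutoff $\widetilde\sigma_k$ equal to $1$ on the support of $\sigma_k$, and using $\lvert\langle\square_k f,\phi\rangle\rvert\le\|\square_k f\|_{L^p}\|\widetilde\square_k\phi\|_{L^{p'}}$ with rapid decay of the second factor. For (3) one first truncates $\sum_k\square_k f$ to $|k|\le N$ (this converges in $M^s_{p,1}$ since $q=1<\infty$, being a tail); the truncation is band-limited in $L^p$, hence $C^\infty$, and multiplying it by a spatial cutoff $\eta(\cdot/R)$ keeps the Fourier support in a fixed bounded set, so only finitely many $\square_k$ act and the $M^s_{p,1}$-error is controlled by the $L^p$-error, which vanishes as $R\to\infty$.

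Claims (4) and (5) combine the two facts directly: $\|f\|_{M^{s_2}_{p_2,1}}=\sum_k\langle k\rangle^{s_2}\|\square_k f\|_{L^{p_2}}\lesssim\sum_k\langle k\rangle^{s_1}\|\square_k f\|_{L^{p_1}}=\|f\|_{M^{s_1}_{p_1,1}}$ using $\langle k\rangle\ge1$, $s_2\le s_1$ and Bernstein; for (5), the same yields $\|f\|_{L^\infty}\le\sum_k\|\square_k f\|_{L^\infty}\lesssim\|f\|_{M^0_{p,1}}$, and $\|f\|_{L^p}\le\sum_k\|\square_k f\|_{L^p}=\|f\|_{M^0_{p,1}}$ because the reconstruction series then converges absolutely in $L^p$ with sum $f$. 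Claim (7) and the bulk of (6) come from regrouping the unit cubes into dyadic Littlewood--Paley blocks $\Delta_j$: for (7), $\Delta_j f=\sum_{|k|\sim2^j}\square_k\Delta_j f$ gives $\|f\|_{B^{s_2}_{p,1}}=\sum_j2^{js_2}\|\Delta_j f\|_{L^p}\lesssim\sum_j\sum_{|k|\sim2^j}\langle k\rangle^{s_2}\|\square_k f\|_{L^p}=\|f\|_{M^{s_2}_{p,1}}$, after which $B^{s_2}_{p,1}\hookrightarrow F^{s_2}_{p,2}=W^{s_2,p}$ together with part (4) (to pass from $s_1$ down to $s_2$) finish it.

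For (6) one needs an estimate in the reverse direction. The key inequality I would prove is
\[
\sum_{k}\|\square_k g\|_{L^p}\;\lesssim\;2^{j\,\tau(p)}\,\|g\|_{L^p}
\qquad\text{whenever }\widehat g\text{ is supported in }\{|\xi|\sim2^j\},
\]
by viewing $g\mapsto(\square_k g)_{|k|\lesssim2^j}$ as a linear map $L^p\to\ell^1(L^p)$ and interpolating its endpoint norms: at $p=2$, $\sum_k\|\square_k g\|_{L^2}\le(\#\{k\})^{1/2}\bigl(\sum_k\|\square_k g\|_{L^2}^2\bigr)^{1/2}\lesssim2^{jd/2}\|g\|_{L^2}$ by almost-orthogonality, while at $p=1$ and $p=\infty$ the crude bound $\|\square_k g\|_{L^p}\lesssim\|g\|_{L^p}$ summed over the $\sim2^{jd}$ relevant indices gives $2^{jd}\|g\|_{L^p}$; interpolation produces exactly the exponent $\tau(p)=\max\bigl(0,d(1-1/p),d/p\bigr)$. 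Applying this with $g=\widetilde\Delta_j f$ (the fattened block, to absorb support overlap) yields $\|f\|_{M^{s_2}_{p,1}}\lesssim\sum_j2^{j(s_2+\tau(p))}\|\widetilde\Delta_j f\|_{L^p}\sim\|f\|_{B^{s_2+\tau(p)}_{p,1}}$, and since $s_1>s_2+\tau(p)$ the standard chain $W^{s_1,p}=F^{s_1}_{p,2}\hookrightarrow B^{s_1}_{p,\infty}\hookrightarrow B^{s_2+\tau(p)}_{p,1}$ closes the argument.

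The only genuine obstacle is step (6): extracting the sharp loss $\tau(p)$ instead of a wasteful $d$ for all $p$ is precisely what forces the endpoint-plus-interpolation scheme and the careful count of unit cubes meeting a dyadic annulus. Since in the application $p>1$ is fixed and the normalized equation is studied only for very smooth data, one could in fact afford the crude counting bound and replace $\tau(p)$ by $d$ throughout; I would nonetheless record the sharp version as stated, as it costs little extra and matches the formulations in \cite{ruzhansky2012modulation,ruzhansky2016global}.
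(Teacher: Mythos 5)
Your proposal is essentially correct, but it is worth noting that the paper does not prove this proposition at all: items (1)--(5) are treated as standard facts from the modulation-space literature (the survey \cite{ruzhansky2012modulation}), and for the two nontrivial inclusions (6)--(7) the paper simply cites \cite{kobayashi2011inclusion}. What you have written is therefore a self-contained replacement for that citation. Your route for (7) (regrouping the unit cubes $\square_k$ inside dyadic annuli to dominate the $B^{s_2}_{p,1}$ norm, then $B^{s_2}_{p,1}\hookrightarrow F^{s_2}_{p,2}=W^{s_2,p}$) and for (6) (the frequency-localized bound $\sum_k\|\square_k g\|_{L^p}\lesssim 2^{j\tau(p)}\|g\|_{L^p}$ obtained by interpolating the map $g\mapsto(\square_k g)_k$ between the $L^2\to\ell^1(L^2)$ endpoint, where Cauchy--Schwarz over the $\sim 2^{jd}$ relevant cubes plus almost-orthogonality gives $2^{jd/2}$, and the trivial $L^1,L^\infty$ endpoints giving $2^{jd}$) does reproduce the sharp exponent $\tau(p)=\max(0,d(1-1/p),d/p)$ of Kobayashi--Sugimoto, and the exponent arithmetic checks out on both sides of $p=2$. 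Two small points deserve care if you write this up: in the density argument (3), choose the spatial cutoff $\eta$ with compactly supported Fourier transform, since otherwise multiplication by $\eta(\cdot/R)$ does not literally preserve a bounded frequency support (only approximately); and in the completeness argument (1), the consistency of the limit family $(g_k)$ with $f:=\sum_k g_k$ should be justified by the continuity of each $\square_k$ on $\cS'$ together with convergence of the partial sums in $\cS'$, since $\square_k\square_{k'}\neq 0$ for neighbouring indices. Neither is a real obstruction. As you observe, for the purposes of this paper the crude loss $d$ in place of $\tau(p)$ would suffice, since the embedding is only invoked with a generous margin $\sigma+d/2$ of extra smoothness.
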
 

The last two properties are not trivial, and have been proved in \cite{kobayashi2011inclusion}.

We also introduce other spaces which are often used in this context: the anisotropic Lebesgue space $L^{p_1,p_2}_{x_i;(x_j)_{j\neq i},t}$,
\begin{align*}
\|f\|_{ L^{p_1,p_2}_{x_i;(x_j)_{j\neq i},t} } &:= \left\| \|f\|_{L^{p_2}_{x_1,\ldots,x_{i-1},x_{i+1},\ldots,x_d,t}(\R^{d-1} \times I)} \right\|_{L^{p_1}_{x_i}(\R)},
\end{align*}
and, for any Banach space $X$, the spaces $l^{1,s}_\square(X)$ and $l^{1,s}_{\square,i}(X)$,
\begin{align}
\|f\|_{l^{1,s}_\square(X)} &:= \sum_{k \in \Z^d} \la k \ra^s \|\square_k f\|_X,  \label{norm1} \\
\|f\|_{l^{1,s}_{\square,i,c}(X)} &:= \sum_{k \in \Z^d_i} \la k \ra^s \|\square_k f\|_X, \; \; \Z^d_i:= \{ k \in \Z^d: |k_i|=\max_{1 \leq j \leq d}|k_j|, |k_i|>c\}. \label{norm2}
\end{align}
For simplicity, we write $l^1_\square(X)=l^{1,0}_\square(X)$ and $M^s_{p,1}=M^s_{p,1}(\R^d)$.

\begin{proposition} \label{LWPhighordschr}
Let $d \geq 2$, $m\geq 2$, $m > 4r/d$ and $s > 2(r-1)+1/m$.
\begin{enumerate}
\item[(i)] There exist $c_0>1$ and $\delta_0=\delta_0(d,m,r)>0$ such that for any $c \geq c_0$, for any $\delta>\delta_0$ and for any $\psi_0 \in M^s_{2,1}$ with $\|\psi_0\|_{M^s_{2,1}} \leq c^{-\delta}$ the equation \eqref{highordschr} admits a unique solution $\psi \in C(I,M^s_{2,1}) \cap D$, where $T=T( \|\psi_0\|_{M^s_{2,1}} ) = \cO( c^{2(r-1)} )$, and
\begin{align} \label{normprop}
\|\psi\|_D &= \sum_{\alpha=0}^{2(r-1)} \sum_{i,l=1}^d \|\d_{x_l}^\alpha\psi\|_{ l^{1,s-r+1/2}_{\square,i,c}(L^{\infty,2}_{x_i;(x_j)_{j\neq i},t}) \cap l^{1,s}_\square(L^{m,\infty}_{x_i;(x_j)_{j\neq i},t}) \cap l^{1,s+1/m}_\square(L^\infty_t L^2_x \cap L^{2+m}_{t,x}) } \; \sleq \; c^{-\delta}.
\end{align}
\item[(ii)] Moreover, if $s \geq s_0(d):=d+2+\frac{1}{2}$, then there exists $\delta_1=\delta_1(d,m,r)>0$ such that for any $c \geq c_0$, for any $\delta>\delta_1$ and for any $\psi_0 \in M^s_{2,1}$ with $\|\psi_0\|_{M^s_{2,1}} \leq c^{-\delta}$  the equation \eqref{highordschr} admits a unique solution $\psi \in C(I,H^s)$, where $T=T( \|\psi_0\|_{M^s_{2,1}} ) = \cO( c^{2(r-1)} )$, and
\begin{align} \label{normsolsobest}
\|\psi(t)\|_{H^{s}} &\sleq c^{-\delta}, \; \; |t| \sleq c^{2(r-1)}.
\end{align}
\end{enumerate}
\end{proposition}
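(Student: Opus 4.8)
The plan is to run a contraction-mapping (fixed-point) argument for the Duhamel formulation of \eqref{highordschr}, following the scheme of \cite{ruzhansky2016global}, but keeping explicit track of the $c$-dependence coming from the operator $A_{c,r}=c^2-\sum_{j=1}^r\Delta^j/c^{2(j-1)}$ and from the time-rescaling $t\mapsto c^2t$. Writing $U_{c,r}(t):=e^{itA_{c,r}}$ (so that after rescaling of time this is the operator $\cU_r$ of the previous subsection), the equation \eqref{highordschr} is equivalent to
\begin{align*}
\psi(t) &= U_{c,r}(t)\psi_0 + i\int_0^t U_{c,r}(t-\tau)\, P\big((\d^\alpha_x\psi)_{|\alpha|\leq 2(r-1)},(\d^\alpha_x\bar\psi)_{|\alpha|\leq 2(r-1)}\big)(\tau)\,\di\tau.
\end{align*}
First I would fix the resolution space: the norm $\|\cdot\|_D$ in \eqref{normprop}, built from the frequency-uniform (modulation) blocks $\square_k$, the anisotropic Lebesgue spaces $L^{p_1,p_2}_{x_i;(x_j)_{j\neq i},t}$, and the summation spaces $l^{1,s}_\square$, $l^{1,s}_{\square,i,c}$. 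The three ingredients of $\|\cdot\|_D$ correspond respectively to a maximal-function (local smoothing) estimate, a Kato-type smoothing estimate that gains derivatives, and the Strichartz pair $(L^\infty_tL^2_x, L^{2+m}_{t,x})$; the derivative gain in the Kato smoothing component is exactly what allows one to absorb the $2(r-1)$ derivatives sitting on $\psi$ inside $P$.

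The key steps, in order, are: (1) prove \emph{linear estimates} for $U_{c,r}(t)$ in each of the three norms making up $\|\cdot\|_D$, with the $c$-weights dictated by \eqref{locdispest}, \eqref{LpLqhighschr}, \eqref{strhighschr}, \eqref{retstrhighschr} (this is the dyadic/Mikhlin-multiplier analysis on a single block $\square_k$, where on a unit frequency cube the phase $|A_{c,r}(\xi)|$ behaves like a nondegenerate order-$2r$ symbol after the rescaling, uniformly in $c$, for $|t|\sleq c^{2(r-1)}$); (2) prove the \emph{retarded (inhomogeneous)} versions of the same estimates via the Christ--Kiselev lemma and \eqref{retstrhighschr}; (3) a \emph{multilinear/algebra estimate}: using that $M^0_{p,1}\subset L^\infty\cap L^p$ (Proposition \ref{modspaceprop}(5)) and that $l^{1,s}_\square$ is a Banach algebra for $s\geq 0$ under the relevant products, bound $\|P((\d^\alpha\psi),(\d^\alpha\bar\psi))\|$ in the dual resolution norm by $C\sum_{m+1\leq|\beta|<M}K^{|\beta|}\|\psi\|_D^{|\beta|}$, exploiting $m\geq 2$ so that at least one quadratic factor is available to pay for the smoothing; (4) combine to get, on a time interval $I=[0,T]$ with $T\sleq c^{2(r-1)}$,
\begin{align*}
\|\Phi(\psi)\|_D &\leq C_1 c^{\beta_0}\|\psi_0\|_{M^s_{2,1}} + C_2 c^{\beta_1}\sum_{|\beta|\geq m+1} K^{|\beta|}\|\psi\|_D^{|\beta|},
\end{align*}
for explicit exponents $\beta_0,\beta_1$ depending on $d,m,r$ (these come from the $c$-powers in the Strichartz and smoothing estimates above); then the smallness hypothesis $\|\psi_0\|_{M^s_{2,1}}\leq c^{-\delta}$ with $\delta>\delta_0:=\delta_0(d,m,r)$ chosen to beat $\beta_0,\beta_1$ makes $\Phi$ a contraction on a ball of radius $\sim c^{-\delta}$ in $D$, giving the unique solution and \eqref{normprop}; (5) for part (ii), upgrade from $M^s_{2,1}$ to $H^s$ using Proposition \ref{modspaceprop}(6)--(7) (so that $\|\psi_0\|_{M^s_{2,1}}\sleq\|\psi_0\|_{W^{s_1,p}}$ for appropriate $s_1,p$, and conversely $\|\psi(t)\|_{H^s}\sleq\|\psi(t)\|_{M^{s'}_{2,1}}$), together with the condition $s\geq s_0(d)=d+2+\tfrac12$ which guarantees enough room in the inclusion $W^{s_1,2}\subset M^s_{2,1}$; persistence of $H^s$-regularity then follows by a standard energy/bootstrap argument on the same time interval.

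The main obstacle I expect is step (1)--(2): obtaining the $c$-uniform linear smoothing and Strichartz estimates for $U_{c,r}(t)$ on the frequency-localized pieces, with the \emph{correct} $c$-weights and only up to times $|t|\sleq c^{2(r-1)}$, and then checking that the resulting powers of $c$ in step (4) are genuinely beatable by a finite $\delta_0$ — i.e. that the gain from smallness of the data is not overwhelmed by the loss in the linear estimates. This is where the hypotheses $m>4r/d$ and $s>2(r-1)+1/m$ enter: the first is the admissibility/scaling condition ensuring the nonlinear term closes in the Strichartz space $L^{2+m}_{t,x}$ (it is the order-$r$ analogue of the condition $\tfrac{2}{p}+\tfrac{d}{rq}=\tfrac{d}{2r}$ in \eqref{admhighschr}), and the second provides exactly the $2(r-1)$ derivatives plus the $1/m$ needed to control $(\d^\alpha\psi)_{|\alpha|\leq 2(r-1)}$ in $l^{1,s}_\square$ of the maximal-function space. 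Verifying that the $l^{1,s}_{\square,i,c}$ norm (which only sees high frequencies $|k_i|>c$) captures the derivative gain uniformly is the delicate bookkeeping point, and is precisely the device borrowed from \cite{ruzhansky2016global}.
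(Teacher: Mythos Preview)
Your outline for part (i) is correct and coincides with the paper's proof: a contraction in the space $D$ whose norm is \eqref{normprop}, fed by the frequency-localized linear estimates (Kato smoothing \eqref{smoothest1rdef}, Strichartz \eqref{strhighschrdecrdef}--\eqref{auxstrrdef}, maximal function \eqref{maxfest1hrdef}--\eqref{maxfest4hrdef}) together with the multilinear product lemmas \eqref{estnlin1}--\eqref{estnlin3}. The exponents $\beta_0,\beta_1$ you anticipate are exactly those collected in \eqref{delta02}--\eqref{delta0r}, and the hypothesis $m>4r/d$ is used precisely where you say, to make the relevant maximal-function and Strichartz pairs admissible.

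For part (ii), however, your plan does not match the paper and is under-specified. The paper does \emph{not} obtain the $H^s$ bound by embedding from the modulation-space solution of part (i) followed by an energy bootstrap. Instead it runs a \emph{second, independent} contraction argument in a different resolution space $Z_I^\delta$, built not from $\square_k$-blocks but from a family of physical-space cubes $(Q_\alpha)_\alpha$, using the Kenig--Ponce--Vega style ingredients of \cite{hao2007well}: homogeneous and inhomogeneous local smoothing \eqref{homlocsmooth1}--\eqref{inhomlocsmooth} and a maximal function estimate \eqref{maxfun}. The threshold $s\geq s_0(d)=d+2+\tfrac12$ arises from the maximal function estimate \eqref{maxfun} (which requires $s>d+\tfrac12$, plus two derivatives for the nonlinearity), not from any inclusion between $W^{s_1,2}$ and $M^s_{2,1}$ as you suggest. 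A naive ``standard energy/bootstrap'' in $H^s$ would lose $2(r-1)$ derivatives through the nonlinearity and does not close; the cube-based local smoothing is exactly what recovers them.

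That said, your passing remark that one could try to read off the $H^s$ bound directly from part~(i) via $M^s_{2,1}\hookrightarrow H^s$ is actually closer to a valid shortcut than you realize: the $D$-norm contains the piece $l^{1,s+1/m}_\square(L^\infty_tL^2_x)$, and $\|\psi\|_{l^{1,s}_\square(L^\infty_tL^2_x)}\geq \|\psi\|_{L^\infty_t M^s_{2,1}}\geq \|\psi\|_{L^\infty_t H^s}$. Pursued cleanly this would give \eqref{normsolsobest} with $\delta_1=\delta_0$ and \emph{no} extra condition on $s$, which is a genuinely different (and simpler) route than the one the paper takes. If you intend that argument, drop the reference to a bootstrap and to the threshold $s_0(d)$, and make the chain of embeddings explicit.
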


From the above Proposition and from the embedding 
$H^{s+\sigma+d/2} \subset M^s_{2,1}$ for any $\sigma>0$ 
we can deduce

\begin{corollary} \label{LWPBNFr}
Let $d \geq 2$, $l\geq 2$, $r<\frac{d}{2}(l-1)$ and $s>2(r-1)+\frac{1}{2(l-1)}$.
Then there exist $c_0>1$,  $\delta_0=\delta_0(d,l,r)>0$ and $\delta_1=\delta_1(d,l,r)>0$ such that for any $c \geq c_0$, for any $\delta>\max(\delta_0,\delta_1)$, for any $\sigma>0$ and for any $\psi_0 \in H^{s+\sigma+d/2}$ with $\|\psi_0\|_{H^{s+\sigma+d/2}} \leq c^{-\delta}$ the normal form equation for \eqref{NLKGham} admits a unique solution $\psi \in C([0,T],H^{s+\sigma+d/2}) \cap D$, where $T=T( \|\psi_0\|_{H^{s+\sigma+d/2}} ) = \cO( c^{2(r-1)} )$, and \eqref{normprop} holds.
Furthermore, we have that $\psi \in L^\infty(I)H^{s+\sigma+d/2}(\R^d)$, and 
\begin{align} \label{normsolsobest2}
\|\psi(t)\|_{H^{s+\sigma+d/2}} &\sleq c^{-\delta}, \; \; |t| \sleq c^{2(r-1)}.
\end{align}
\end{corollary}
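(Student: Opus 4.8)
The plan is to recognize the order-$r$ normalized equation associated with \eqref{NLKGham} as a particular instance of the quasilinear equation \eqref{highordschr}, to apply Proposition~\ref{LWPhighordschr}, and then to account for the fact that the Corollary wants the solution in the same Sobolev scale $H^{s+\sigma+d/2}$ in which the datum is given. For the identification: by Theorem~\ref{normformgavthm} applied to \eqref{NLKG} as in Section~\ref{NLKGappl} (see \eqref{eqstep2}--\eqref{eqstep2l2}), upon discarding the remainder and rescaling the time back, that equation has exactly the structure of \eqref{highordschr}: its linear part is the $r$-th order Taylor truncation $A_{c,r}=c^{2}-\sum_{j=1}^{r}c^{-2(j-1)}\Delta^{j}$ of $c\nablac$, and its nonlinearity is a polynomial in $\psi,\bar\psi$ and their derivatives up to order $2(r-1)$ whose homogeneous part of lowest degree is the gauge-transformed NLS nonlinearity, proportional to $|\psi|^{2(l-1)}\psi$, of degree $2l-1$. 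Hence in the notation of \eqref{highordschr} one has $m+1=2l-1$, i.e. $m=2(l-1)$, with $M=M(l,r)$ finite. For this value of $m$ the three requirements of Proposition~\ref{LWPhighordschr} --- $m\geq 2$, $m>4r/d$, and $s>2(r-1)+\tfrac{1}{m}$ --- become exactly $l\geq 2$, $r<\tfrac{d}{2}(l-1)$, and $s>2(r-1)+\tfrac{1}{2(l-1)}$, i.e. the hypotheses of the Corollary. (The excluded case $r=1$ is the classical NLS.)

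Next I would transfer the smallness of the datum. By Proposition~\ref{modspaceprop}(6) with $p=2$, for which $\tau(2)=\max(0,d/2,d/2)=d/2$, one has the continuous, $c$-independent embedding $H^{s+\sigma+d/2}\hookrightarrow M^s_{2,1}$ for every $\sigma>0$; thus $\|\psi_0\|_{M^s_{2,1}}\sleq\|\psi_0\|_{H^{s+\sigma+d/2}}\leq c^{-\delta}$, and after slightly enlarging $c_0$ (equivalently, taking $\delta$ above the threshold $\delta_0$ of Proposition~\ref{LWPhighordschr}(i)) the embedding constant is absorbed and the smallness hypothesis of that Proposition is met. Proposition~\ref{LWPhighordschr}(i) then yields, for $c\geq c_0$ and $\delta>\delta_0$, a \emph{unique} solution $\psi\in C([0,T],M^s_{2,1})\cap D$ of the normalized equation with $T=T(\|\psi_0\|_{M^s_{2,1}})=\cO(c^{2(r-1)})$ and with the bound \eqref{normprop}; since the datum is $\sleq c^{-\delta}$ in $H^{s+\sigma+d/2}$, the right-hand side of \eqref{normprop} is indeed $\sleq c^{-\delta}$.

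It remains to propagate the $H^{s+\sigma+d/2}$-regularity on $[0,T]$ and to establish \eqref{normsolsobest2}. Because the Sobolev--modulation embedding loses $\sim d/2$ derivatives, this cannot be read off from Proposition~\ref{LWPhighordschr} directly; instead I would run an a priori estimate at the level $s+\sigma+d/2$ along the solution just built. Writing $\psi$ through the Duhamel formula for $A_{c,r}$ and using the $L^2$-unitarity and the retarded Strichartz estimates for $\cU_r$ from Section~\ref{BNFlinsec}, one bounds $\jap{\grad}^{s+\sigma+d/2}\psi$ in the relevant (anisotropic) Strichartz norms by the corresponding norm of the datum plus a multilinear term in $\psi$ and its $\leq 2(r-1)$ derivatives, each factor of which is already controlled in the $D$-norm, hence is $\sleq c^{-\delta}$ on $I$; the surplus powers of $c^{-\delta}$ coming from the degree ($\geq 2l-1$) of the nonlinearity, together with $|I|\sleq c^{2(r-1)}$, make the nonlinear contribution absorbable provided $\delta$ exceeds a further threshold $\delta_1=\delta_1(d,l,r)$, and one concludes $\|\psi(t)\|_{H^{s+\sigma+d/2}}\sleq\|\psi_0\|_{H^{s+\sigma+d/2}}\sleq c^{-\delta}$ for $|t|\sleq c^{2(r-1)}$, which is \eqref{normsolsobest2} (in particular $\psi\in L^\infty(I)H^{s+\sigma+d/2}$). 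Continuity in time with values in $H^{s+\sigma+d/2}$ then follows from the equation by the usual weak-continuity plus norm-convergence argument, and uniqueness in $C([0,T],H^{s+\sigma+d/2})\cap D$ is inherited from Proposition~\ref{LWPhighordschr}; when moreover $s+\sigma+d/2\geq s_0(d)$ this regularity can alternatively be obtained directly from part (ii).

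The main obstacle --- already confined to the proof of Proposition~\ref{LWPhighordschr} --- is the genuinely quasilinear nature of the normalized equation: the nonlinearity extracts $2(r-1)$ derivatives, so the contraction does not close in isotropic $C_tH^s$-type spaces and one is forced to work with the full scale of frequency-uniform (modulation) and anisotropic Strichartz estimates encoded in the norm $D$ of \eqref{normprop}, adapting \cite{ruzhansky2016global}; this is why Section~\ref{higherordWP} does not proceed by classical energy methods. The point specific to the Corollary is the $\sim d/2$ derivative mismatch between the Sobolev space where the NLKG datum is prescribed and the modulation space in which well-posedness is proved, which makes the separate propagation step above necessary; and one must check that $c_0$, $\delta_0$, $\delta_1$ and the constant implicit in $T=\cO(c^{2(r-1)})$ can be chosen uniformly in $c$ and in $\sigma$.
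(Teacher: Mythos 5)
Your proposal follows the same route as the paper: identify the order-$r$ normalized equation for \eqref{NLKGham} as an instance of \eqref{highordschr} with $m=2(l-1)$ (so that $m\geq 2$, $m>4r/d$ and $s>2(r-1)+1/m$ become exactly the hypotheses of the Corollary), embed $H^{s+\sigma+d/2}\hookrightarrow M^s_{2,1}$ via Proposition \ref{modspaceprop}, and invoke Proposition \ref{LWPhighordschr}; the paper states the Corollary as an immediate consequence of that Proposition and this embedding, with no further argument. Your extra persistence-of-regularity step is therefore a genuine refinement rather than a deviation: part (ii) of the Proposition applied at level $s$ only yields $\|\psi(t)\|_{H^s}\sleq c^{-\delta}$, which is $\sigma+d/2$ derivatives short of \eqref{normsolsobest2}, while applying it at level $s+\sigma+d/2$ would require smallness of the datum in $M^{s+\sigma+d/2}_{2,1}$, i.e. in a strictly smaller Sobolev space than the one assumed. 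Your Duhamel argument at the higher regularity, using the smoothing and anisotropic estimates of Section \ref{higherordWP} with the lower-order factors controlled by the $D$-norm from part (i), is the natural way to close this gap and is consistent with the appearance of both thresholds $\delta_0$ and $\delta_1$ in the statement.
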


Since the nonlinearity in Eq. \eqref{highordschr} involves derivatives, this could cause a loss of derivatives as long as we rely only on energy estimates, on dispersive estimates or on Strichartz estimates. In order to overcome such a problem, we will study the time decay of the operator $\cU_r(t):=e^{itA_{c,r}}$, its local smoothing property, Strichartz estimates with $\square_k$-decomposition and maximal function estimates in the framework of frequency-uniform localization.

The rest of this subsection is devoted to the proof of Proposition 
\ref{LWPhighordschr}. For convenience, we will always use the following 
function sequence $(\sigma_k)_{k \in \Z^d}$ to define modulation spaces.

\begin{lemma}
Let $(\eta_k)_{k \in \Z} \in \cY_1$, and assume that $\text{supp}(\eta_k) \subset [k-2/3,k+2/3]$. Consider
\begin{align} \label{decompfun}
\sigma_k(\xi) := \eta_{k_1}(\xi_1) \cdots \eta_{k_d}(\xi_d), \; k = (k_1,\ldots,k_d) \in \Z^d,
\end{align}
then $(\sigma_k)_{k \in \Z^d} \in \cY_d$.
\end{lemma}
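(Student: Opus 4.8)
The plan is to verify the two defining properties of the class $\cY_d$ directly from the tensor-product structure \eqref{decompfun}, reducing everything to the corresponding one-dimensional facts about $(\eta_k)_{k \in \Z} \in \cY_1$. First I would check the support condition: since $\sigma_k(\xi) = \eta_{k_1}(\xi_1)\cdots\eta_{k_d}(\xi_d)$, the support of $\sigma_k$ is contained in the product $\prod_{i=1}^d \text{supp}(\eta_{k_i})$, and by hypothesis each factor lies in $[k_i - 2/3, k_i + 2/3]$. Hence $\text{supp}(\sigma_k) \subset \prod_{i=1}^d [k_i-2/3,k_i+2/3] = k + [-2/3,2/3]^d \subset [-3/4,3/4]^d + k$; in particular, setting $\sigma := \sigma_0$, one has $\text{supp}(\sigma) \subset [-3/4,3/4]^d$ and $\sigma_k(\cdot) = \sigma(\cdot - k)$, which is exactly \eqref{decpr1}. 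Here I use that $2/3 < 3/4$, so the one-dimensional support hypothesis on the $\eta_k$ is slightly stronger than what is needed, leaving room.

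Next I would verify the partition-of-unity property \eqref{decpr2}. Fix $\xi = (\xi_1,\ldots,\xi_d) \in \R^d$. Summing over $k \in \Z^d$ and using that the sum factors as a product of one-dimensional sums,
\begin{align*}
\sum_{k \in \Z^d} \sigma_k(\xi) &= \sum_{k_1 \in \Z} \cdots \sum_{k_d \in \Z} \eta_{k_1}(\xi_1)\cdots\eta_{k_d}(\xi_d) = \prod_{i=1}^d \left( \sum_{k_i \in \Z} \eta_{k_i}(\xi_i) \right) = \prod_{i=1}^d 1 = 1,
\end{align*}
where in each factor I invoked the one-dimensional identity $\sum_{k \in \Z}\eta_k(\xi_i) = 1$ coming from $(\eta_k)_{k\in\Z} \in \cY_1$. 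The interchange of the (finite, for each fixed $\xi$, by the support condition) sum with the product is harmless since for each $\xi_i$ only finitely many $\eta_{k_i}(\xi_i)$ are nonzero, so the multi-index sum is actually a finite sum.

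The two properties \eqref{decpr1} and \eqref{decpr2} being established, we conclude $(\sigma_k)_{k\in\Z^d} \in \cY_d$ by definition. There is no real obstacle here: the statement is a routine bookkeeping lemma whose only purpose is to fix a concrete, tensor-structured choice of the decomposition functions so that later the one-dimensional estimates (local smoothing, maximal function, Strichartz with $\square_k$-localization) can be applied coordinate-by-coordinate. The mildly delicate point, if any, is simply making sure the numerology $2/3 < 3/4$ is respected so that the product of the one-dimensional supports genuinely sits inside $[-3/4,3/4]^d$; everything else is immediate from Fubini-type factorization of sums of finitely supported families.
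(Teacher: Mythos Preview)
Your proposal is correct and is precisely the routine verification one expects; the paper itself states this lemma without proof, treating it as an elementary bookkeeping fact. The only point you leave slightly implicit is that the translation identity $\sigma_k(\cdot)=\sigma_0(\cdot-k)$ follows because each $\eta_{k_i}$ is already a translate $\eta_{k_i}(\cdot)=\eta_0(\cdot-k_i)$ by the $\cY_1$ hypothesis, but this is immediate.
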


For convenience, we also write
\begin{align} \label{decop}
\tilde\sigma_k = \sum_{\|l\|_\infty \leq 1} \sigma_{k+l}, \; &\; \tilde\square_k = \sum_{\|l\|_\infty \leq 1} \square_{k+l}, \; k \in \Z^d,
\end{align}
and one can check that
\begin{align} \label{decoprop}
\tilde\sigma_k \sigma_k = \sigma_k, \; &\; \tilde\square_k \circ \square_k = \square_k, \; k \in \Z^d.
\end{align}

We also write $\cA_rf(t,x) := \int_0^t \cU_r(t-\tau)f(\tau,x)\di\tau$.

\subsubsection{Time decay} \label{tdecsubsec}

Now, the time-decay of the operator $\cU_r(t)$ is known (see \eqref{locdispest}), but now we are interested in its frequency-localized version, and we want to consider lower, medium and higher frequency separately. For simplicity we discuss the case $r=2$, and we defer to the the end of this section a remark about the case $r>2$. So, consider
\begin{align*}
\cU_2(t) &= e^{itA_{c,2}} = e^{ic^2t} \; \cF^{-1} e^{it(|\xi|^2-\frac{|\xi|^4}{c^2})} \cF,
\end{align*}
and write $\epsilon=c^{-2}$. It is known that the time decay of $\cU_2(t)$ is determined by the critical points of $P_2(|\xi|)= |\xi|^2-\epsilon |\xi|^4$. Notice that $P'_2(R)=4R(\epsilon^{1/2}R+\frac{1}{\sqrt{2}})(\epsilon^{1/2}R-\frac{1}{\sqrt{2}})$, the singular points of $P_2$ are $\xi=0$ and the points of the sphere $\xi=(2\epsilon)^{-1/2}$. To handle these points, we exploit Littlewood-Paley decomposition, Van der Corput lemma and some properties of the Fourier transform of radial functions.

Indeed, it is known that the Fourier transform of a radial function $f$ is radial, 
\begin{align*}
\cF f(\xi) &= 2\pi \int_0^\infty f(R) R^{d-1}(R|\xi|)^{-(d-2)/2} J_{\frac{d-2}{2}}(R|\xi|) dR,
\end{align*}
where $J_m$ is the order $m$ Bessel function,
\begin{align*}
J_m(R) &= \frac{(R/2)^m}{\Gamma(m+1/2)\pi^{1/2}} \int_{-1}^1 e^{iRt} (1-t^2)^{m-1/2} dt, \; \; m>-1/2.
\end{align*}
By following the computations in \cite{ruzhansky2016global} we obtain that
\begin{align}
\cF f(s) &= K_d \pi \int_0^\infty f(R) R^{d-1} e^{-iRs} \bar h(Rs) \di R + K_d \pi \int_0^\infty f(R) R^{d-1} e^{iRs} h(Rs) \di R, \; \; K_d>0, \label{FTradial} \\
|h^{(k)}(R)| &\leq K_d (1+R)^{- \frac{d-1}{2}-k}, \; \; \forall k \geq 0. \label{FTradial2}
\end{align}

Now we make a Littlewood-Paley decomposition of the frequencies: choose $\rho$ a smooth cut-off function equal to $1$ in the unit ball and equal to $0$ outside the ball of radius $2$, write $\phi_0=\rho(\cdot)-\rho(2\cdot)$, $\phi_j(\cdot)= \cF^{-1}\phi_0(2^{-j}\cdot)\cF$, $j \in \Z$, and consider
\begin{align}
\cU_2(t)\psi_0 &= \sum_{|j| \leq K} \phi_j(D)\cU_2(t)\psi_0 + \sum_{j<- K} \phi_j(D)\cU_2(t)\psi_0 + \sum_{j>K} \phi_j(D)\cU_2(t)\psi_0 \nonumber \\
&=: P_= \, \cU_2(t)\psi_0 + P_<\,\cU_2(t)\psi_0 + P_>\,\cU_2(t)\psi_0, \label{lpdec}
\end{align}
where 
\begin{align} \label{threshold}
K &:= K(\epsilon) \; = \; 10-\frac{1}{2} \lceil \log_2 \epsilon\rceil. 
\end{align}

Notice that the singular point $R=0$ is in the support set of $\cF(P_= \, \cU_2 (t)\psi_0)$. Roughly speaking, if $j <-K$, the dominant term in $P_2(R)$ is $R^2$, while if $j>K$ the dominant term in $P_2(R)$ is $\epsilon R^4$; hence, by \eqref{locdispest}
\begin{align}
\| P_< \, \cU_2(t)\psi_0 \|_{L^\infty} &\sleq |t|^{-d/2} \|\psi_0\|_{L^1}, \label{dispestlow} \\
\| P_> \, \cU_2(t)\psi_0 \|_{L^\infty} &\sleq c^{d/2}|t|^{-d/4} \|\psi_0\|_{L^1}, \; 0 < |t| \sleq c^2. \label{dispesthigh}
\end{align}

The time decay estimate for $P_= \, \cU_2(t)\psi_0$ is more difficult, since $P_2(R)$ has a singular point in $R=R_1:=(2\epsilon)^{-1/2}$, which corresponds to the sphere $|\xi|=R_1$ in the support set of $\cF(P_= \, \cU_2 (t)\psi_0)$. We notice that also the point that satisfies $P_2''(R)=0$, $R=(6\epsilon)^{-1/2}$, corresponds to a sphere $\xi=R_2$ contained in the support set of $\cF(P_= \, \cU_2 (t)\psi_0)$; we shall use this fact later. \\

In order to handle the singular point $R_1$, we perform another decomposition around the sphere $|\xi|=R_1$. Denote $\tilde\rho(\cdot)=\rho(2^{-K}\cdot)-\rho(2^{(K+1)}\cdot)$, then $P_= = \cF^{-1} \tilde\rho \cF$; write $P_k = \cF^{-1} \phi_k(|\xi|-R_1)\cF$, we get
\begin{align} \label{sing2dec}
\sum_{|j| \leq K} \phi_j(D)\cU_2(t)\psi_0 &= \sum_{k \in \Z} P_= P_k \, \cU_2(t)\psi_0
\end{align}

By Young's inequality
\begin{align}
\|P_= P_k \, \cU_2(t)\psi_0\|_{L^\infty} &\sleq \| \cF^{-1}\left( \tilde\rho \phi_k(|\xi|-R_1) e^{-itP_2(|\xi|)}  \right) \|_{L^\infty} \|\psi_0\|_{L^1}.
\end{align}
Moreover,
\begin{align*}
&\cF^{-1}\left( \tilde\rho \phi_k(|\xi|-R_1) e^{-itP_2(|\xi|)}  \right) \\
&\stackrel{\eqref{FTradial}}{=} K_d \pi \int_0^\infty  R^{d-1} \tilde\rho(R)\phi_k(R-R_1) e^{-itP_2(R)-iR|x|} \bar h(R|x|) \di R \\
&\; \; + K_d \pi \int_0^\infty  R^{d-1} \tilde\rho(R)\phi_k(R-R_1) e^{-itP_2(R)+iR|x|}  h(R|x|) \di R \\
&=:A_k(|x|)+B_k(|x|).
\end{align*}
In order to estimate $A_k(s)$ we rewrite it as
\begin{align}
A_k(s) &= K_d\pi \left( \int_{R_1}^\infty+\int_0^{R_1} \right) R^{d-1} \tilde\rho(R)\phi_k(R-R_1) e^{-itP_2(R)-iRs} \bar h(Rs) \di R \\
&=: A_k^{(1)}(s)+A_k^{(2)}(s).
\end{align}

We begin by estimating $A_k^{(1)}$: notice that $A_k^{(1)}(s)$ for $k>K+2$, hence we can assume that $k \leq K+2$. By a change of variables we obtain
\begin{align*}
A_k^{(1)}(s) &\stackrel{R=R_1+2^k\sigma}{=} 2^k K_d\pi e^{-iR_1s} \int_{1/2}^2 F(\sigma) e^{it 2^{2k} \tilde{P_2}(\sigma) } \di\sigma, \\
F(\sigma) &:= (R_1+2^k \sigma)^{d-1} \tilde\rho(R_1+2^k\sigma)\phi_0(\sigma) \bar h((R_1+2^k\sigma)s), \\
\tilde{P_2}(\sigma) &:= (2^{2k}t)^{-1} (t P_2(R_1+2^k\sigma)-2^k\sigma s).
\end{align*}
One can check that
\begin{align*}
|\tilde{P_2}'(\sigma)| &= \left| 4(R_1+2^k\sigma)(2R_1+2^k\sigma)\sigma\epsilon- \frac{s}{t2^k} \right|.
\end{align*}
Let $s \gg 1$; if $s \ll 2^k t/ \epsilon$, then 
\begin{align*}
|F^{(m)}(\sigma)| \sleq 1, \; \; \forall m \geq 1, \; \; |\tilde{P_2}'(\sigma)| &\sleq \epsilon, \; |\tilde{P_2}''(\sigma)| &\sleq \epsilon^{1/2}, \; |\tilde{P_2}'''(\sigma)| &\sleq \epsilon, \;  |\tilde{P_2}^{(m)}(\sigma)| &\stackrel{\epsilon\leq1}{\sleq} 1, \; \forall m \geq 4
\end{align*}
while for $s \gg 2^kt/ \epsilon$
\begin{align*}
|F^{(m)}(\sigma)| \sleq 1, \; \; \forall m \geq 1, \; \; |\tilde{P_2}^{(m)}(\sigma)| &\stackrel{\epsilon\leq1}{\sleq} 1, \; \forall m \geq 1.
\end{align*}
Integrating by parts we get
\begin{align*}
A^{(1)}_k(s) &= 2^k(2^{2k}t)^{-N} K_d\pi e^{iR_1s} \int_{1/2}^2 e^{ it 2^{2k}\tilde{P_2}(\sigma) } \frac{\di}{\di \sigma} \left( \frac{1}{ \tilde{P_2}'(\sigma) } \cdots \frac{\di}{\di \sigma} \left( \frac{1}{ \tilde{P_2}'(\sigma) } \frac{\di}{\di \sigma} \left( \frac{ F(\sigma) }{ \tilde{P_2}'(\sigma) } \right) \right) \right) \di\sigma.
\end{align*}
Therefore
\begin{align} \label{estAk1}
|A_k^{(1)}(s)| &\sleq 2^k(2^{2k}t)^{-N}.
\end{align}
If $s \sim 2^kt/\epsilon$, we apply Van der Corput Lemma,
\begin{align*}
|A_k^{(1)}(s)| &\sleq 2^k(2^{2k}t)^{-1/2} \int_{1/2}^2 |\d_\sigma F(\sigma)|\di \sigma \\
&\stackrel{\eqref{FTradial2}}{\sleq} 2^k(2^{2k}t)^{-1/2} s^{-(d-1)/2} \sleq 2^k(2^{2k}t)^{-d/2} \epsilon^{(d-1)/2}.
\end{align*}
Moreover, we can check that $|A_k^{(1)}(s)| \sleq 2^k$; hence, for $s \gg 1$
\begin{align} \label{esta1high}
|A_k^{(1)}(s)| &\stackrel{\epsilon \leq 1}{\sleq} 2^k \min( 1, (2^{2k}t)^{-d/2} ).
\end{align}

If $s \sleq 1$, we rewrite $A_k^{(1)}$ in the following form
\begin{align*}
A_k^{(1)}(s) &= 2^k K_d \pi e^{-iR_1 s} \int_{1/2}^2 F_1(\sigma) e^{itP_2(R_1+2^k\sigma)} \di\sigma, \\
F_1(\sigma) &:= (R_1+2^k\sigma)^{d-1} \tilde\rho(R_1+2^k\sigma)\phi_0(\sigma) \bar h((R_1+2^k\sigma)s) e^{-i2^k\sigma s}.
\end{align*}
Again integrating by parts, we obtain
\begin{align} \label{esta1low}
|A_k^{(1)}(s)| &\sleq 2^k \min( 1, (2^{2k}t)^{-d/2} ). \\
\end{align}

Now we estimate $A_k^{(2)}$.  We notice that $R_2 \in \text{supp}(\phi_k(R_1-\cdot))$ if and only if $k \in \{-2,-1\}$; when $k \notin \{-2,-1\}$ one can repeat the above argument and show that
\begin{align} \label{esta2first}
|A_k^{(2)}(s)| &\sleq 2^k \min( 1, (2^{2k}t)^{-d/2} ).
\end{align}
Let $k \in \{-2,-1\}$. If $s \ll t$ or $s \gg t$ we have by integration by parts that
\begin{align*}
|A_k^{(2)}(s)| &\sleq \min( 1, t^{-N} ), \; \; \forall N \in \N.
\end{align*}
On the other hand, if $s \sim t$ we can use Van der Corput Lemma and obtain
\begin{align*} 
|A_k^{(2)}(s)| &\sleq t^{-1/3} s^{-(d-1)/2} \sleq t^{-\frac{d}{2}+\frac{1}{6}}.
\end{align*}
Therefore, for $k \in \{-2,-1\}$ we have 
\begin{align} \label{esta2second}
|A_k^{(2)}(s)| &\sleq \min( 1,t^{-\frac{d}{2}+\frac{1}{6}} ).
\end{align}
Combining \eqref{esta2first} and \eqref{esta2second} we can deduce that
\begin{align} \label{esta2}
|A_k^{(2)}(s)| &\sleq 2^k \min( 1,(2^{2k}t)^{-\frac{d}{2}+\frac{1}{6}} ).
\end{align}
If we sum up all the $A_k$ for $k \leq K+2$ we finally conclude that 
for any $d \geq 2$
\begin{align}
\| P_= \, \cU_2(t)\psi_0 \|_{L^\infty} &\sleq c \min(|t|^{-d/2},|t|^{-d/2+1/6}) \|\psi_0\|_{L^1}. \label{dispestmed}
\end{align}

\begin{remark} \label{genrcase}
In the general case $r>2$, we have to determine critical points for the polynomial
\begin{align} \label{genpol}
P_r(R) &= \sum_{j=1}^r (-1)^{j+1} \epsilon^{j-1}R^{2j},
\end{align}
namely the roots of the polynomial
\begin{align} \label{genpold}
P'_r(R) &= \sum_{j=1}^r (-1)^{j+1} \epsilon^{j-1} 2j R^{2j-1} = R \left(\sum_{j=1}^r (-1)^{j+1} \epsilon^{j-1} 2j R^{2(j-1)}\right).
\end{align}
Besides the trivial value $R=0$, which we deal as in the case $r=2$, one should rely on lower and upper bounds to determine the other (if any) real roots. For a lower bound, we rely on a well-known corollary of Rouché theorem from complex analysis, and we obtain that the other roots satisfy
\begin{align*}
R &\geq \frac{2}{\max\left( 2,\sum_{j=1}^r 2j\epsilon^{j-1} \right)}  \\
&\geq \frac{2}{\max\left( 2,2r \sum_{j=0}^{r-1} \epsilon^j \right)} \\
&\stackrel{\epsilon \leq 1/2}{\geq} \frac{2}{\max(2,4r\epsilon)} 
\stackrel{\epsilon \ll 1/(2r)}{\geq} 1.
\end{align*}
For what concerns an upper bound, we exploit an old result by Fujiwara (\cite{fujiwara1916obere}), and we get that the roots satisfy
\begin{align*}
R &\leq \max_{1 \leq j \leq r-1} \left( 2(r-1) \frac{2j\epsilon^{j-1}}{2r\epsilon^{r-1}} \right)^{ \frac{1}{2(j-1)} } \\
&\leq 2(r-1) \max_{1 \leq j \leq r-1} \left(\frac{j}{r}\right)^{ \frac{1}{2(j-1)} }  \epsilon^{ \frac{j-r}{2(j-1)} } \\
&\stackrel{\epsilon \leq 1}{\leq} K_r \epsilon^{-1/2}
\end{align*}
for some $K_r>0$. 

Hence, in the case $r>2$, if $\epsilon$ sufficiently small (depending on $r$), then the polynomial $P'_r$ has critical points (apart from 0) which have modulus between $1$ and $\cO(\epsilon^{-1/2})$ (a similar argument works also for the polynomial $P''_r$), and this affects the medium-frequency decay of $\cU_r(t)$. In any case, we can deal with this problem as in the case $r=2$, and we get
\begin{align}
\| P_< \, \cU_r(t)\psi_0 \|_{L^\infty} &\sleq |t|^{-d/2} \|\psi_0\|_{L^1}, \label{dispestrlow} \\
\| P_= \, \cU_r(t)\psi_0 \|_{L^\infty} &\sleq c \min(|t|^{-d/2},|t|^{-d/2+1/6}) \|\psi_0\|_{L^1}, \label{dispestrmed} \\
\| P_> \, \cU_r(t)\psi_0 \|_{L^\infty} &\sleq c^{d/2}|t|^{-\frac{d}{2r}} \|\psi_0\|_{L^1}, \; 0 < |t| \sleq c^{2(r-1)}. \label{dispestrhigh}
\end{align}

\end{remark}

\subsubsection{Smoothing estimates} \label{smoothsubsec}

As already pointed out, one needs smoothing estimates to ensure the well-posedness of Eq. \eqref{highordschr} because of the presence of derivatives in the nonlinearity. Again, we first consider the case $r=2$, and then we mention the results for $r>2$.

\begin{proposition} \label{smoothprop1}
For any $k=(k_1,\ldots,k_d) \in \Z^d$ with $|k_i|=|k|_\infty$ and $|k_i|\sgeq c$
\begin{align} \label{smoothest1}
\left\| \square_k D_{x_i}^{3/2}\cU_2(t)\psi_0 \right\|_{ L^{\infty,2}_{x_i;(x_j)_{j\neq i},t} } &\sleq c \|\square_k \psi_0\|_{L^2}.
\end{align}
\end{proposition}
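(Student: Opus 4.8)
The plan is to prove this local smoothing estimate by Plancherel's theorem in the variables $(x_j)_{j\neq i}$ and $t$, followed by a one‑dimensional change of variables in $\xi_i$ in which the gain of $3/2$ derivatives is traded against a Jacobian. Since $e^{ic^2t}$ has modulus one, it can be discarded, so we may work with the multiplier of phase $P_2(|\xi|)=|\xi|^2-\epsilon|\xi|^4$, $\epsilon=c^{-2}$. Write $x=(x_i,x')$, $\xi=(\xi_i,\xi')$, $\rho=|\xi|$, and let $\tau$ be the frequency dual to $t$. First I would fix $x_i$ and observe that, for $\psi_0\in\cS$,
\[
\square_k D_{x_i}^{3/2}\cU_2(t)\psi_0(x_i,x',t)=c_d\int_{\R^d}e^{i(x'\cdot\xi'+tP_2(\rho))}\,g_{x_i}(\xi)\,\di\xi,\qquad g_{x_i}(\xi):=e^{ix_i\xi_i}|\xi_i|^{3/2}\sigma_k(\xi)\,\cF\psi_0(\xi),
\]
i.e. as the inverse $(x',t)$–Fourier transform of the pushforward of $g_{x_i}(\xi)\,\di\xi$ under the map $T\colon\xi\mapsto(\xi',P_2(\rho))$. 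By Plancherel in $(x',t)$, $\|\square_k D_{x_i}^{3/2}\cU_2(t)\psi_0(x_i,\cdot,\cdot)\|_{L^2_{x',t}}^2$ equals (up to an absolute constant) the squared $L^2_{\xi',\tau}$–norm of the density of $T_*(g_{x_i}\,\di\xi)$.

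To compute that density I need $T$ to be a diffeomorphism onto its image on $\mathrm{supp}\,\sigma_k$. Here the hypothesis enters: since $|k_i|=|k|_\infty$, on $\mathrm{supp}\,\sigma_k$ the component $\xi_i$ has a fixed sign and $\rho\sim|\xi_i|\sim|k_i|$; since moreover $|k_i|\gsim c$ with the implied constant chosen large enough, we get $\rho\ge c$, so $\rho$ stays strictly on the decreasing branch of $P_2$, away from its critical sphere $\rho=c/\sqrt2$. There $P_2'(\rho)=2\rho(1-2\epsilon\rho^2)$ satisfies $|P_2'(\rho)|=2\rho(2\epsilon\rho^2-1)\ge 2\rho\cdot\epsilon\rho^2=2\rho^3/c^2$, and for fixed $\xi'$ the map $\xi_i\mapsto P_2(\sqrt{\xi_i^2+|\xi'|^2})$ is a diffeomorphism with $|\partial\tau/\partial\xi_i|=|P_2'(\rho)|\,|\xi_i|/\rho>0$. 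Consequently $\det DT=P_2'(\rho)\xi_i/\rho$, and changing variables from $(\xi',\tau)$ back to $\xi$ gives
\[
\Big\|\square_k D_{x_i}^{3/2}\cU_2(t)\psi_0(x_i,\cdot,\cdot)\Big\|_{L^2_{x',t}}^2
\;\sleq\;\int_{\R^d}\frac{|\xi_i|^2\,\rho}{|P_2'(\rho)|}\,|\sigma_k(\xi)|^2\,|\cF\psi_0(\xi)|^2\,\di\xi .
\]

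On $\mathrm{supp}\,\sigma_k$ the weight is bounded by $\rho^3/(2\rho^3/c^2)=c^2/2$, so the right–hand side is $\sleq c^2\int|\sigma_k|^2|\cF\psi_0|^2=c^2\|\square_k\psi_0\|_{L^2}^2$, uniformly in $x_i$. Taking the supremum over $x_i$ yields \eqref{smoothest1} for Schwartz data, and the general case follows by density. The main obstacle I expect is the diffeomorphism/Jacobian bookkeeping around $T$: one must verify that the radial phase $P_2$ has no critical point over $\mathrm{supp}\,\sigma_k$ and extract the sharp lower bound $|P_2'(\rho)|\gsim\rho^3/c^2$ there — this is precisely the point where $|k_i|=|k|_\infty\gsim c$ is used, putting us in the regime where the biharmonic term $\epsilon|\xi|^4$ dominates and is responsible for the $3/2$–derivative smoothing; everything else is Plancherel and a change of variables. (For $r>2$ one proceeds identically, using the bounds on the critical points of $P_r$ from Remark \ref{genrcase}, with the $3/2$ replaced by the appropriate smoothing order.)
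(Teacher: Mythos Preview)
Your proof is correct and follows essentially the same route as the paper's: Plancherel in the transverse variables and in $t$, a one–dimensional change of variables $\xi_i\mapsto\tau=P_2(|\xi|)$ for fixed $\xi'$, and a Jacobian bound on $\mathrm{supp}\,\sigma_k$ using $|k_i|=|k|_\infty\gsim c$ to stay away from the critical sphere and obtain $|\partial\tau/\partial\xi_i|\gsim\rho^3/c^2$. The paper carries out the same computation more bare-handedly (writing the integrand, substituting $\theta=P_2(|\xi|)$, and bounding $|\xi_1|\,(2|\xi|^2/c^2-1)^{-1/2}\le c$), whereas you phrase it via the pushforward $T_*(g_{x_i}\,\di\xi)$ and its density; the substance is identical.
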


\begin{proof}
It suffices to consider the case $i=1$. 
For convenience, we write $\bar z=(z_1,\ldots,z_d)$. Then,
\begin{align*}
\left\| \square_k D_{x_i}^{3/2}\cU_2(t)\psi_0 \right\|_{ L^{\infty,2}_{x_i;(x_j)_{j\neq i},t} } &= \left\| \int \sigma_k(\xi)|\xi_1|^{3/2} e^{itP_2(|\xi|)} \cF(\psi_0)(\xi) e^{ix_1\xi_1} \di\xi_1 \right\|_{ L^\infty_{x_1} L^2_{\bar\xi,t} } \\
&\sleq \left\| \int \eta_{k_1}(\xi_1)|\xi_1|^{3/2} e^{itP_2(|\xi|)} \cF(\psi_0)(\xi) e^{ix_1\xi_1} \di\xi_1 \right\|_{ L^\infty_{x_1} L^2_{\bar\xi,t} } =: L.
\end{align*}
Now, we estimate $L$: if $k_1 \sgeq c$, then $\xi_1>0$ for $\xi \in \text{supp}(\eta_{k_1})$. Hence, by changing variable, $\theta = P_2(|\xi|)$, we get
\begin{align*}
L &\sleq \left\| \int \eta_{k_1}( \xi_1(\theta) )\xi_1(\theta)^{3/2} e^{it\theta} \cF(\psi_0)(\xi(\theta)) e^{ix_1\xi_1(\theta)} \; \frac{1}{2} \xi_1^{-1}(\theta) \left( 2 \frac{|\xi|^2}{c^2}-1 \right)^{-1} \right\|_{ L^\infty_{x_1} L^2_{\bar\xi,t} } \\
&\sleq \left\| \eta_{k_1}( \xi_1(\theta) )\xi_1(\theta)^{1/2} \cF(\psi_0)(\xi(\theta)) \left( 2 \frac{|\xi|^2}{c^2}-1 \right)^{-1} \right\|_{ L^2_{\theta} L^2_{\bar\xi} } \\
&\sleq \left\| \eta_{k_1}( \xi_1 )\xi_1^{1/2} \cF(\psi_0)(\xi) \left( 2 \frac{|\xi|^2}{c^2}-1 \right)^{-1} \left( 2 \frac{|\xi|^2}{c^2}-1 \right)^{1/2} \xi_1^{1/2} \right\|_{ L^2_{\xi} } \\
&= \left\| \eta_{k_1}( \xi_1 )\xi_1 \cF(\psi_0)(\xi) \left( 2 \frac{|\xi|^2}{c^2}-1 \right)^{-1/2} \right\|_{ L^2_{\xi} }  \; \sleq \; c \|\psi_0\|_{L^2}. \\
\end{align*}
The proof for the case $k_1 \sleq -c$ is similar.
\end{proof}

By duality we have the following

\begin{proposition} \label{smoothprop2}
For any $k=(k_1,\ldots,k_d) \in \Z^d$ with $|k_i|=|k|_\infty$ and $|k_i|\sgeq c$
\begin{align} \label{smoothest2}
\left\| \square_k \d^2_{x_i} \cA_2f \right\|_{ L^\infty_t L^2_x } &\sleq c \|\square_k D_i^{1/2} f\|_{ L^{1,2}_{x_i;(x_j)_{j \neq i},t} }.
\end{align}
\end{proposition}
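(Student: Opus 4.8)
The claim is exactly dual to the local smoothing estimate \eqref{smoothest1}, so I would prove it by a plain duality argument, with no Christ--Kiselev type device needed because the endpoints in the time variable are $L^1$ against $L^\infty$. First I would reduce to a fixed-time estimate: since $\square_k$ commutes with $\d^2_{x_i}$ and with $\cU_2$, hence with $\cA_2$, one has $\square_k\d^2_{x_i}\cA_2 f=\d^2_{x_i}\cA_2 g$ with $g:=\square_k f$, and by \eqref{decoprop} $\tilde\square_k g=\tilde\square_k\square_k f=\square_k f=g$. Using the group law $\cU_2(t-\tau)=\cU_2(t)\cU_2(-\tau)$ and the unitarity of $\cU_2(t)$ on $L^2(\R^d)$, for every $t\in I$ one has
\[
\|\d^2_{x_i}\cA_2 g(t)\|_{L^2_x}=\Big\|\d^2_{x_i}\int_0^t\cU_2(-\tau)g(\tau)\,\di\tau\Big\|_{L^2_x},
\]
so it suffices to bound the right-hand side uniformly in $t\in I$.

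Next I would dualize this $L^2_x$-norm. Fixing $t\in I$ and $\phi\in L^2_x$ with $\|\phi\|_{L^2}\le 1$, and using that $\square_k,\tilde\square_k,\d^2_{x_i}$ are self-adjoint Fourier multipliers, that $\cU_2(-\tau)^*=\cU_2(\tau)$, that all these operators commute, inserting $\tilde\square_k$ at no cost since $\tilde\square_k g=g$, and writing $\d^2_{x_i}=-D_{x_i}^{1/2}D_{x_i}^{3/2}$, one obtains
\[
\Big\langle\d^2_{x_i}\!\int_0^t\!\cU_2(-\tau)g(\tau)\,\di\tau,\,\phi\Big\rangle_x=-\int_0^t\big\langle D_{x_i}^{1/2}g(\tau),\,\tilde\square_k D_{x_i}^{3/2}\cU_2(\tau)\phi\big\rangle_x\,\di\tau.
\]
Since $L^{\infty,2}_{x_i;(x_j)_{j\neq i},t}$ is dual to $L^{1,2}_{x_i;(x_j)_{j\neq i},t}$ for the space--time pairing (Cauchy--Schwarz in $((x_j)_{j\neq i},\tau)$ followed by Hölder in $x_i$), the modulus of the last integral is at most
\[
\|D_{x_i}^{1/2}g\|_{L^{1,2}_{x_i;(x_j)_{j\neq i},t}}\,\big\|\tilde\square_k D_{x_i}^{3/2}\cU_2(\tau)\phi\big\|_{L^{\infty,2}_{x_i;(x_j)_{j\neq i},t}},
\]
and since the $\tau$-integration for the first factor runs only over $[0,t]\subseteq I$, the resulting bound is uniform in $t$.

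Finally I would estimate the two factors. The first equals $\|\square_k D_{x_i}^{1/2}f\|_{L^{1,2}_{x_i;(x_j)_{j\neq i},t}}$, i.e. the right-hand side of \eqref{smoothest2}. For the second I would expand $\tilde\square_k=\sum_{\|l\|_\infty\le1}\square_{k+l}$; each block $\square_{k+l}$ still localizes the $i$-th frequency to a set of definite sign and of size $\sgeq c$ (because $|k_i+l_i|\ge|k_i|-1\sgeq c$ for $c$ large), so Proposition~\ref{smoothprop1}---whose proof uses only these two facts---applies to each of the finitely many terms with initial datum $\phi$, and, together with $\|\square_{k+l}\phi\|_{L^2}\sleq\|\phi\|_{L^2}$, this gives
\[
\big\|\tilde\square_k D_{x_i}^{3/2}\cU_2(\tau)\phi\big\|_{L^{\infty,2}_{x_i;(x_j)_{j\neq i},t}}\sleq c\,\|\phi\|_{L^2}\le c.
\]
Taking the supremum over $\phi$ and over $t\in I$ then yields \eqref{smoothest2}. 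There is no serious obstacle here: the only slightly delicate points are the $\square_k$/$\tilde\square_k$ bookkeeping in the duality step, handled by \eqref{decoprop}, and the (immediate) verification that the fattened blocks $\square_{k+l}$ still meet the frequency hypothesis of Proposition~\ref{smoothprop1}; the case $r>2$ is handled in exactly the same way, using the $\cU_r$-analogue of \eqref{smoothest1}.
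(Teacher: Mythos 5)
Your argument is correct and is precisely the route the paper takes: the paper's entire proof of this proposition is the phrase ``by duality'' applied to Proposition \ref{smoothprop1}, and you have simply filled in that duality computation (unitarity of $\cU_2$, the $L^{1,2}$--$L^{\infty,2}$ pairing, the factorization $\d^2_{x_i}=-D_{x_i}^{1/2}D_{x_i}^{3/2}$, and the $\tilde\square_k$ bookkeeping via \eqref{decoprop}), including the correct observation that the proof of \eqref{smoothest1} only uses sign-definiteness and size $\sgeq c$ of the $i$-th frequency, so it applies to the fattened blocks $\square_{k+l}$.
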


Now consider the inhomogeneous Cauchy problem
\begin{align} \label{inhom}
-i\psi_t &= A_{c,2}\psi + f(t,x), \; \; \psi(0,x)=0.
\end{align}

\begin{proposition} \label{smoothprop3}
For any $k=(k_1,\ldots,k_d) \in \Z^d$ with $|k_i|=|k|_\infty$ and $|k_i|\sgeq c$
\begin{align} \label{smoothest3}
\left\| \square_k \d^2_{x_i} \psi \right\|_{ L^{\infty,2}_{x_i;(x_j)_{j \neq i},t} } &\sleq \|\square_k f\|_{ L^{1,2}_{x_i;(x_j)_{j \neq i},t} }.
\end{align}
\end{proposition}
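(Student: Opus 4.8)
My approach would be through the Fourier transform in time, in the style of the Kenig--Ponce--Vega inhomogeneous smoothing estimate for the Schr\"odinger equation. (The more elementary route -- dualise Proposition~\ref{smoothprop1}, compose via $TT^\ast$, and upgrade the resulting non-retarded estimate to the Duhamel integral by Christ--Kiselev -- is not available here, because the time exponents on the two sides of \eqref{smoothest3} coincide, which is exactly the endpoint where Christ--Kiselev fails.)

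First I would note that $\psi=i\,\cA_2 f$ is the forward solution of \eqref{inhom}, extend $f$ by zero in time, and use causality to write $\widehat\psi(\tau)=(\tau-A_{c,2}-i0)^{-1}\widehat f(\tau)$ (Fourier transform in $t$). Taking $i=1$ and writing $Q(\xi):=c^2+|\xi|^2-|\xi|^4/c^2=c^2+P_2(|\xi|)$ for the symbol of $A_{c,2}$, the partial Fourier transform of $\square_k\d^2_{x_1}\psi$ in $((x_j)_{j\neq1},t)$ equals, up to a harmless constant,
\[ \int_{\R}e^{ix_1\xi_1}\,\frac{\xi_1^2\,\sigma_k(\xi)}{\tau-Q(\xi)-i0}\,\widehat f(\xi,\tau)\,\di\xi_1 . \]
By Plancherel in $((x_j)_{j\neq1},t)$, \eqref{smoothest3} then reduces to bounding, uniformly in the transverse frequency $\bar\xi$ and in $\tau$, the $L^\infty_{x_1}$-norm of this integral by the $L^1_{x_1}$-norm of the corresponding partial Fourier transform of $\square_k f$.

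The core step is to evaluate the inverse Fourier transform in $\xi_1$ (at fixed $\bar\xi$) of the multiplier $\xi_1^2\sigma_k(\xi)(\tau-Q(\xi)-i0)^{-1}$. On $\mathrm{supp}\,\sigma_k$ the hypothesis $|k_i|=|k|_\infty\gtrsim c$ yields $|\xi_1|\sim|\xi|\sim|k|\gtrsim c$, so $\xi_1\mapsto\theta:=Q(\xi_1,\bar\xi)$ is a diffeomorphism there: indeed $\d_{\xi_1}Q=P_2'(|\xi|)\,\xi_1/|\xi|$, and the restriction $|k|\gtrsim c$ keeps us to the right of the critical spheres $|\xi|=(2\epsilon)^{-1/2},(6\epsilon)^{-1/2}$ of $P_2$ discussed in Section~\ref{tdecsubsec} ($\epsilon=c^{-2}$), so that $|\d_{\xi_1}Q|\sim\epsilon|\xi|^3=|\xi|^3/c^2$ is bounded away from $0$. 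After this change of variables the integral becomes a Cauchy integral in $\theta$ against $(\tau-\theta-i0)^{-1}=\mathrm{p.v.}\,(\tau-\theta)^{-1}+i\pi\delta(\tau-\theta)$, with a weight $|\d_{\xi_1}Q|^{-1}$ from the Jacobian. The $\delta$-part is the on-shell contribution -- equivalently, the $TT^\ast$ of Proposition~\ref{smoothprop1} applied to the non-retarded operator $\int_\R\cU_2(t-s)f\,\di s$ -- and its $L^\infty_{x_1}$-norm at fixed $(\bar\xi,\tau)$ is controlled by $\bigl(|\xi_1|^2/|\d_{\xi_1}Q|\bigr)\,\mathbf 1_{I_{\bar\xi,k}}(\tau)$ times a bounded factor, where $I_{\bar\xi,k}$ is an interval of length $\sim|\d_{\xi_1}Q|$; the weight $|\xi_1|^2/|\d_{\xi_1}Q|\sim c^2|\xi_1|^2/|\xi|^3$ is admissible on $\mathrm{supp}\,\sigma_k$ precisely by $|k_i|\gtrsim c$, and the bound for this piece follows from Young's inequality in $x_1$, Minkowski's inequality to exchange $L^1_{x_1}$ with $L^2_{\bar\xi,\tau}$, and Plancherel. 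The principal-value part is the genuinely delicate one: here one uses the $L^2\!\to\!L^2$ boundedness of the Hilbert transform in the energy variable together with the fact that the oscillatory factor $e^{ix_1\xi_1(\theta)}$ has modulus $1$, so that the bound is uniform in $x_1$; one cannot simply estimate $L^\infty_{x_1}$ by $L^2_{x_1}$, and the argument proceeds, as in Kenig--Ponce--Vega, by isolating the singularity $\theta=\tau$ and disposing of the remaining non-stationary off-shell part by repeated integration by parts in $\theta$ (using $\d_{\xi_1}Q\neq0$). This last point is the main obstacle.

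Finally, the case $r>2$ reduces to the above: the only ingredient specific to $P_2$ is that $\xi_1\mapsto Q_r(\xi_1,\bar\xi)$, with $Q_r(\xi)=c^2+P_r(|\xi|)$, is a diffeomorphism on $\mathrm{supp}\,\sigma_k$ with $|\d_{\xi_1}Q_r|$ bounded below there. By the Rouch\'e- and Fujiwara-type bounds recalled in Remark~\ref{genrcase}, every nonzero critical point of $P_r$ (and of $P_r''$) has modulus between $1$ and $\cO(c)$, so for $|k_i|\gtrsim c$ with a large enough implicit constant we are past all of them, $|\d_{\xi_1}Q_r|\sim\epsilon^{r-1}|\xi|^{2r-1}$ is bounded away from $0$ on $\mathrm{supp}\,\sigma_k$, and the argument goes through verbatim.
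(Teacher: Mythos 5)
Your reduction is the same as the paper's: pass to the space--time Fourier transform of the forward solution, so that after Young's inequality in $x_1$ and Minkowski/Plancherel in the remaining variables, \eqref{smoothest3} collapses to the single uniform kernel bound \eqref{est3thesis}, namely $\sup_{x_1,\tau,\bar\xi}|\cF^{-1}_{\xi_1}\,\sigma_k(\xi)\xi_1^2(c^2+P_2(|\xi|)-\tau)^{-1}|\sleq 1$. (Your $-i0$ prescription is in fact tidier than the source: it builds the retarded solution in from the start, whereas the paper must append a separate argument at the end of its proof to correct for the fact that its space--time-Fourier-defined $\psi$ need not vanish at $t=0$.) Where you diverge is in how the kernel bound is proved. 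You change variables to the energy $\theta=Q(\xi_1,\bar\xi)$ and split $(\tau-\theta-i0)^{-1}$ into delta and principal-value parts; the paper instead factors the quartic denominator explicitly (the case split $\tau-c^2>0$ versus $\tau-c^2\le0$, the quantities $\tau_2,A,B$, and partial fractions) and computes the one-dimensional inverse Fourier transforms of each factor in closed form ($\mathrm{sgn}$ for the resonant linear factor, a decaying exponential for the Lorentzian), finishing with Young's inequality. The two routes carry the same content: your monotonicity of $\xi_1\mapsto Q$ on $\mathrm{supp}\,\sigma_k$, justified by being past the critical spheres of $P_2$, is exactly what makes the paper's resonant factor a simple zero, and your Rouch\'e/Fujiwara argument for $r>2$ matches Remark~\ref{genrcase}.

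The one place you stop short is the principal-value part, which you call ``the main obstacle''. It is not: you need neither the $L^2$ theory of the Hilbert transform nor repeated integration by parts against the phase $e^{ix_1\xi_1(\theta)}$, whose $\theta$-derivative grows with $x_1$. Undo the change of variables and write, near the simple zero $\xi_1^*$ of $Q(\cdot,\bar\xi)-\tau$, the multiplier as $a(\xi_1)/(\xi_1-\xi_1^*)$ with $a$ smooth, supported on an interval of length $O(1)$, with $\|a\|_{L^\infty}\sleq\xi_1^2/|\d_{\xi_1}Q|$ and comparably bounded derivatives. Its inverse Fourier transform is a constant multiple of $(e^{i\xi_1^*\cdot}\,\mathrm{sgn})*\cF^{-1}a$, so Young's inequality against the bounded function $\mathrm{sgn}$ gives an $L^\infty_{x_1}$ bound by $\|\cF^{-1}a\|_{L^1}\sleq\|a\|_{L^\infty}$, uniformly in $x_1$ and $\tau$ --- the same bound as your delta part. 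This is precisely the mechanism in the paper's estimates of $I_1$, $I_2$, $II$ (``$\cF^{-1}_{\xi_1}(1/\xi_1)$ is the function $\mathrm{sgn}$'' followed by Young), so your sketch closes with an elementary step rather than a KPV-type stationary-phase analysis. One caveat you should revisit: the on-shell weight $\xi_1^2/|\d_{\xi_1}Q|\sim c^2|\xi_1|/|\xi|^2$ is only $O(c^2/|k_1|)$, hence $O(c)$, under the stated hypothesis $|k_1|\sgeq c$, so calling it ``admissible'' for the bound $\sleq 1$ is optimistic; the paper's own computation lands on the same quantity, so this is a constant-tracking issue you share with the source rather than a defect specific to your method.
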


\begin{proof}
It suffices to consider $i=1$. We write 
\begin{align*}
\psi &= \cF^{-1}_{\tau,\xi} \frac{1}{\tau-c^2-P_2(|\xi|)}(\cF_{t,x}f)(\tau,\xi).
\end{align*}
We have
\begin{align} \label{d2u}
\d^2_{x_i}\psi &= \cF^{-1}_{\tau,\xi} \frac{\xi_1^2}{P_2(|\xi|)+c^2-\tau}\cF_{t,x}f.
\end{align}
We want to show that
\begin{align*}
\left\| \cF^{-1}_{\tau,\xi} \frac{\eta_{k_1}(\xi_1)\xi_1^2}{P_2(|\xi|)+c^2-\tau}\cF_{t,x}f \right\|_{L^\infty_{x_1} L^2_{\bar\xi,t}} &\sleq \left\| \cF^{-1}_{\xi_1} \eta_{k_1}(\xi_1) \cF_{x_1}f \right\|_{L^1_{x_1} L^2_{\bar\xi,t}},
\end{align*}
which, by Young's inequality, is equivalent to show that
\begin{align} \label{est3thesis}
\sup_{x_1,\tau,\xi_j \; (j \neq 1)} \left| \cF^{-1}_{\xi_1} \frac{\sigma_k(\xi)\xi_1^2}{P_2(|\xi|)+c^2-\tau} \right| &\sleq 1.
\end{align}

We prove \eqref{est3thesis}: first, notice that when $|k_1|=|k|_\infty$, then $|\xi_1| \sim |\xi|_\infty$ for $\xi \in \text{supp}(\sigma_k)$. 
We split the argument according to the cases $\tau-c^2>0$ and $\tau-c^2\leq0$. 
In the case $\tau-c^2 > 0$
\begin{align*}
\sup_{x_1,\tau,\xi_j \; (j \neq 1)} \left| \cF^{-1}_{\xi_1} \frac{\sigma_k(\xi)\xi_1^2}{P_2(|\xi|)+c^2-\tau} \right| & \sleq \left| \int_{k_1-3/4}^{k_1+3/4} \frac{c^2}{\xi_1^2} \di\xi_1 \right| \; \stackrel{|k_1|\sgeq c}{\sleq} 1. 
\end{align*}
\\

When $\tau-c^2 \leq 0$ we set $\tau_2:=\tau_2(c)= c \left( \sqrt{\frac{5}{4}-\frac{\tau}{c^2}} - \frac{1}{2} \right) > 0$, in order to write
\begin{align*}
P_2(|\xi|)+c^2-\tau &= \left( \frac{|\xi|^2}{c}+\tau_2 \right) \left( -\frac{|\xi|^2}{c}+\tau_2+c \right).
\end{align*}
Hence
\begin{align}
 \cF^{-1}_{\xi_1} \frac{\sigma_k(\xi)\xi_1^2}{P_2(|\xi|)+c^2-\tau} &= \cF^{-1}_{\xi_1} \frac{\sigma_k(\xi)\xi_1^2}{ \left( \frac{|\xi|^2}{c}+\tau_2 \right) \left( -\frac{|\xi|^2}{c}+\tau_2+c \right) } \nonumber \\
&= \cF^{-1}_{\xi_1} \frac{\sigma_k(\xi)\xi_1^2}{ \left( \frac{\xi_1^2}{c}+\frac{|\bar\xi|^2}{c}+\tau_2 \right) \left( -\frac{\xi_1^2}{c}-\frac{|\bar\xi|^2}{c}+\tau_2+c \right) }. \label{smooth3symb}
\end{align}
When $|\bar\xi|^2 \geq c(\tau_2+c)$, we can treat the problem as before.

Next, we consider the case $|\bar\xi|^2 < c(\tau_2+c)$. Let
\begin{align*}
A^2 &:= A(\bar\xi,\tau,c)^2 = \frac{|\bar\xi|^2}{c}+\tau_2, \\
B^2 &:= B(\bar\xi,\tau,c)^2 = -\left( \frac{|\bar\xi|^2}{c}-\tau_2-c \right),
\end{align*}
then
\begin{align}
&\cF^{-1}_{\xi_1} \frac{\eta_{k_1}(\xi_1)\xi_1^2}{ \left( \frac{|\xi|^2}{c}+\tau_2 \right) \left( -\frac{|\xi|^2}{c}+\tau_2+c \right) } \nonumber \\
&= \cF^{-1}_{\xi_1} \frac{\xi_1}{ \frac{\xi_1^2}{c}+A^2 } \frac{\xi_1}{ B^2-\frac{\xi_1^2}{c} } \eta_{k_1}(\xi_1) \nonumber \\
&= \frac{c^{1/2}}{2} \cF^{-1}_{\xi_1} \frac{\xi_1}{ \frac{\xi_1^2}{c}+A^2 } \left( \frac{1}{  B -\frac{\xi_1}{c^{1/2}} } - \frac{1}{  B +\frac{\xi_1}{c^{1/2}} } \right) \eta_{k_1}(\xi_1) \nonumber \\
&=: I + II. \nonumber
\end{align}
We estimate only $I$, as the argument of $II$ is similar. First we write
\begin{align*}
I &= -\frac{c}{2} \cF^{-1}_{\xi_1} \frac{\eta_{k_1}(\xi_1)}{ \frac{\xi_1^2}{c}+A^2  } + \frac{c}{2} \cF^{-1}_{\xi_1} \frac{\eta_{k_1}(\xi_1)B}{ ( B -\frac{\xi_1}{c^{1/2}} ) (  \frac{\xi_1^2}{c}+A^2 ) } := I_1+I_2.
\end{align*}
Since $\cF^{-1}_{\xi_1}(1/\xi_1)$ is the function $sgn(\xi_1)$, we have that 
$I_1$ is bounded uniformly with respect to $c$. For $I_2$, it suffices to show 
\begin{align*}
cB \sup_{x_1} \left| \cF^{-1}_{\xi_1} \frac{1}{ ( B -\frac{\xi_1}{c^{1/2}} ) (  \frac{\xi_1^2}{c}+A^2 ) } \right| &\sleq 1.
\end{align*}
Since $|\cF(e^{-|\cdot|})(\xi)| \sleq \frac{1}{1+|\xi|^2}$,
\begin{align*}
cB \left\| \cF^{-1}_{\xi_1} \frac{1}{ ( B -\frac{\xi_1}{c^{1/2}} ) (  \frac{\xi_1^2}{c}+A^2 ) } \right\|_{L^\infty_{x_1}} &\sleq cB \left\| \cF^{-1}_{\xi_1} \frac{1}{ B -\frac{\xi_1}{c^{1/2}} } \right\|_{L^\infty_{x_1}} \left\| \cF^{-1}_{\xi_1} \frac{1}{ \frac{\xi_1^2}{c}+A^2 } \right\|_{L^1_{x_1}} \\
&\sleq \frac{c^2B}{A^2} \left\| \cF^{-1}_{\xi_1} \frac{1}{ B -\frac{\xi_1}{c^{1/2}} } \right\|_{L^\infty_{x_1}} \left\| \cF^{-1}_{\xi_1} \frac{1}{ \xi_1^2A^{-2} + 1} \right\|_{L^1_{x_1}} \\
&\sleq B \left\| \cF^{-1}_{\xi_1} \frac{1}{ B -\frac{\xi_1}{c^{1/2}} } \right\|_{L^\infty_{x_1}} \cdot \frac{c^2}{A^2} \left\| A e^{-A|x_1|} \right\|_{L^1_{x_1}} \; \sleq \; 1.
\end{align*}

Finally, we observe that in general the solution $\psi$ of \eqref{inhom} may not vanish at $t=0$. However, by Parseval identity
\begin{align*}
\psi(0,x) &= \psi(t,x)_{|t=0} = K \int_I \cU_2(s) \cF(f)(s,x) \di s,
\end{align*}
for some $K>0$, and if we combine it with \eqref{smoothest2}, we have that 
$\square_k\cU_2(t) \di^2_{x_1}\psi(0,x) \in L^2$. Hence, by \eqref{smoothest1} 
\begin{align}
\tilde\psi(t) &:= \psi(t)-\cU_2(t)\psi(0,\cdot) = i \int_I \cU_2(t-\tau)f(\tau)\di\tau
\end{align}
is the solution of \eqref{inhom}, and it satisfies \eqref{smoothest3}.
\end{proof}

\begin{lemma}
For any $\sigma \in \R$ and $k \in \Z^d$ with $|k_i| \geq 4$,
\begin{align} \label{danest}
\| \square_k D^\sigma_{x_i}\psi \|_{ L^{p_1,p_2}_{x_1;(x_j)_{j \neq 1},t} } &\sleq \la k_i \ra^\sigma \| \square_k \psi \|_{ L^{p_1,p_2}_{x_1;(x_j)_{j \neq 1},t} }. 
\end{align}
If we replace $D^\sigma_{x_i}$ by $\d^\sigma_{x_i}$, the above inequality holds 
for all $k \in \Z^d$.
\end{lemma}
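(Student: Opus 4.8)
The estimate is a Bernstein-type inequality for the frequency-uniform (modulation) decomposition, and the plan is to reduce it to a uniform $L^1$ bound on a convolution kernel. Since $\square_k$ and $D^\sigma_{x_i}$ are both Fourier multipliers they commute, and by \eqref{decoprop} we may insert $\tilde\square_k$ for free, obtaining
\[
\square_k D^\sigma_{x_i}\psi \; = \; D^\sigma_{x_i}\,\tilde\square_k\,(\square_k\psi) \; = \; G_k * (\square_k\psi), \qquad G_k := \cF^{-1}\big( |\xi_i|^\sigma\, \tilde\sigma_k(\xi) \big),
\]
where the convolution acts only in the space variables (the time variable $t$ is a passive parameter). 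The cutoff $\tilde\sigma_k$ is what makes this work: $|\xi_i|^\sigma$ by itself is not the symbol of an $L^1$-kernel multiplier, but it acts trivially on $\square_k\psi$ thanks to \eqref{decoprop}. Once this is in place, Young's inequality --- equivalently Minkowski's integral inequality, applicable because the anisotropic Lebesgue norm $L^{p_1,p_2}_{x_1;(x_j)_{j \neq 1},t}$ is translation invariant --- gives
\[
\| \square_k D^\sigma_{x_i}\psi \|_{ L^{p_1,p_2}_{x_1;(x_j)_{j \neq 1},t} } \; \leq \; \|G_k\|_{L^1(\R^d)}\; \| \square_k\psi \|_{ L^{p_1,p_2}_{x_1;(x_j)_{j \neq 1},t} },
\]
so the lemma reduces to the claim $\|G_k\|_{L^1(\R^d)} \sleq \la k_i \ra^\sigma$, uniformly in $k$.

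To prove this I would first translate in frequency by $k \in \Z^d$: since $\cF^{-1}[h(\cdot-k)](x) = e^{i\la k,x\ra}\,\cF^{-1}[h](x)$, a translation multiplies the kernel by a unimodular factor and preserves its $L^1$ norm, so $\|G_k\|_{L^1} = \|\cF^{-1}(m_k)\|_{L^1}$ with $m_k(\zeta) := |\zeta_i+k_i|^\sigma\,\tilde\sigma_0(\zeta)$. Writing $|\zeta_i+k_i|^\sigma = \la k_i \ra^\sigma \, n_k(\zeta)$ with $n_k(\zeta) := ( |\zeta_i+k_i| / \la k_i \ra )^{\sigma}\,\tilde\sigma_0(\zeta)$, it suffices to show that, for $|k_i| \geq 4$, the functions $n_k$ form a bounded family in $C_c^\infty$: they are all supported in the fixed compact set $\text{supp}(\tilde\sigma_0)$, and all their $C^N$ norms are bounded uniformly in $k$, for every $N$. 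This is exactly where the hypothesis $|k_i| \geq 4$ is used: the $\xi_i$-support of $\tilde\sigma_k$ lies in a fixed-length interval around $k_i$, so $|k_i| \geq 4$ keeps $|\xi_i|$ bounded away from the origin and comparable to $\la k_i\ra$; hence $|\d_{\zeta_i}^m (|\zeta_i+k_i|^\sigma)| \sleq \la k_i\ra^{\sigma-m}$, and after multiplication by $\la k_i\ra^{-\sigma}$ one gets bounds $\sleq \la k_i\ra^{-m} \leq 1$ for $m \geq 1$ (and a bounded quantity for $m=0$), while $\zeta_j$-derivatives with $j \neq i$ only hit $\tilde\sigma_0$. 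Finally, a smooth function supported in a fixed ball with controlled $C^{d+1}$ norm has, by integration by parts, an inverse Fourier transform bounded in $L^1$ by a multiple of that norm; this gives $\|\cF^{-1}(n_k)\|_{L^1} \sleq 1$ uniformly, and multiplying back by $\la k_i\ra^\sigma$ closes the argument.

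For the variant with $\d^\sigma_{x_i}$ instead of $D^\sigma_{x_i}$ --- read as the Fourier multiplier $(i\xi_i)^\sigma$, e.g.\ for $\sigma \in \N$ --- the only difference is that $(i\xi_i)^\sigma$ is smooth across $\xi_i = 0$, so both $|(i\xi_i)^\sigma| \sleq \la k_i\ra^\sigma$ on the relevant support and the uniform $C^N$ control of $(i(\zeta_i+k_i))^\sigma \la k_i\ra^{-\sigma}$ on the fixed cube hold for \emph{all} $k \in \Z^d$, which is why the restriction $|k_i| \geq 4$ can be dropped. I expect the only genuinely delicate point to be the bookkeeping in the second step, namely verifying that the $\la k_i\ra^{-\sigma}$ normalization exactly absorbs the growth in $k$ of the derivatives of $|\xi_i|^\sigma$ so that the rescaled symbols $n_k$ really do lie in a single bounded subset of $C_c^\infty$; everything else is the standard principle that convolution with an $L^1$ kernel is bounded on every (anisotropic) Lebesgue space.
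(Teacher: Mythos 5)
Your argument is correct and is exactly the standard multiplier/Bernstein argument behind this estimate: the paper itself gives no self-contained proof but simply cites Lemma 3.4 of \cite{wang2009global}, and what you have written is a complete version of that proof, with the key points all in place --- insertion of $\tilde\square_k$ via \eqref{decoprop}, Young's (Minkowski's) inequality in the translation-invariant anisotropic norm, and the uniform $L^1$ bound on the frequency-translated, $\la k_i\ra^{-\sigma}$-normalized kernels, where $|k_i|\geq 4$ keeps the support of $\tilde\sigma_k$ away from the singularity of $|\xi_i|^\sigma$ so that $|\xi_i|\sim\la k_i\ra$ there. Nothing further is needed.
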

\begin{proof}
See the proof of Lemma 3.4 in \cite{wang2009global}. 
One can check that both sides of \eqref{danest} are equivalent for $|k_i|\geq4$.
\end{proof}

By combining \eqref{smoothest3}, \eqref{smoothest2} and \eqref{danest} we obtain

\begin{proposition}
For any $k=(k_1,\ldots,k_d) \in \Z^d$ with $|k_i|=|k|_\infty \sgeq c$ we have
\begin{align} 
\left\| \square_k \d_{x_i}^2\cA_2f \right\|_{ L^{\infty,2}_{x_i;(x_j)_{j\neq i},t} } &\sleq \|\square_k f\|_ {L^{1,2}_{x_i;(x_j)_{j\neq i},t} }, \label{smoothestfin1} \\
\left\| \square_k \d^2_{x_i} \cA_2f \right\|_{ L^\infty_t L^2_x } &\sleq c \la|k_i|\ra^{1/2} \|\square_k f\|_{ L^{1,2}_{x_i;(x_j)_{j \neq i},t} }. \label{smoothestfin2} 
\end{align}
\end{proposition}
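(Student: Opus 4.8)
The plan is to assemble the proposition from the three ingredients just established, with essentially no new computation: this is a bookkeeping step rather than a substantive one, which is why the statement simply says ``by combining''.

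First I would note that $\cA_2 f$ is, up to an irrelevant unimodular constant, exactly the Duhamel solution $\tilde\psi$ of the inhomogeneous problem \eqref{inhom} produced at the end of the proof of Proposition \ref{smoothprop3}: both are the same Fourier multiplier in $(t,x)$ applied to $f$, the identity $\tilde\psi(t)=i\int_0^t\cU_2(t-\tau)f(\tau)\di\tau$ was verified there, and it was shown that $\tilde\psi$ solves \eqref{inhom} and satisfies \eqref{smoothest3}. Reading \eqref{smoothest3} with $\psi$ replaced by $\cA_2 f$ then gives \eqref{smoothestfin1} at once, with no further work.

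For \eqref{smoothestfin2} I would start from \eqref{smoothest2}, which already produces the factor $c$ and leaves only a half-derivative on $f$:
\[
\left\| \square_k \d^2_{x_i} \cA_2 f \right\|_{ L^\infty_t L^2_x } \sleq c\,\bigl\| \square_k D_{x_i}^{1/2} f \bigr\|_{ L^{1,2}_{x_i;(x_j)_{j \neq i},t} },
\]
and then trade the operator $D_{x_i}^{1/2}$ for the scalar weight $\la k_i\ra^{1/2}$. Since $|k_i|=|k|_\infty\sgeq c$, in the regime of interest one has $|k_i|\geq 4$, so \eqref{danest} applies in its $D^\sigma_{x_i}$ form with $\sigma=1/2$ and anisotropic exponents $(p_1,p_2)=(1,2)$, yielding $\bigl\|\square_k D_{x_i}^{1/2} f\bigr\|_{L^{1,2}_{x_i;(x_j)_{j\neq i},t}}\sleq \la k_i\ra^{1/2}\bigl\|\square_k f\bigr\|_{L^{1,2}_{x_i;(x_j)_{j\neq i},t}}$. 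Chaining the two inequalities gives \eqref{smoothestfin2}.

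The only points to watch are that $\square_k$ commutes with $\cA_2$ and with $D_{x_i}^{1/2}$ (immediate, since all are Fourier multipliers), that the implicit constants in \eqref{smoothest2} and \eqref{danest} are uniform in $c$ throughout the range $|k_i|\sgeq c$, and that the intermediate norm $L^{1,2}_{x_i;(x_j)_{j\neq i},t}$ is precisely the one on the right-hand side of both target estimates — all of which is already guaranteed by the cited statements. So I do not expect a genuine obstacle here; the whole content of the proposition sits in \eqref{smoothest2}, \eqref{smoothest3} and \eqref{danest}, and the real work was done in proving those.
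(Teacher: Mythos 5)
Your proposal is correct and is exactly the combination the paper intends: \eqref{smoothestfin1} is \eqref{smoothest3} read off for the Duhamel solution $\cA_2 f$ identified at the end of the proof of Proposition \ref{smoothprop3}, and \eqref{smoothestfin2} is \eqref{smoothest2} followed by \eqref{danest} with $\sigma=1/2$ in the anisotropic $L^{1,2}$ norm. The paper's own proof is literally the phrase ``by combining \eqref{smoothest3}, \eqref{smoothest2} and \eqref{danest}'', so your write-up supplies the same argument in slightly more detail.
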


\begin{remark}
For the case $r>2$ we replace \eqref{smoothest1}, \eqref{smoothest2}, 
\eqref{smoothest3}, \eqref{smoothestfin1} and \eqref{smoothestfin2} with
\begin{align} 
\left\| \square_k D_{x_i}^{r-1/2}\cU_r(t)\psi_0 \right\|_{ L^{\infty,2}_{x_i;(x_j)_{j\neq i},t} } &\sleq c^{r-1} \|\square_k \psi_0\|_{L^2}, \label{smoothestr1} \\
\left\| \square_k \d^r_{x_i} \cA_rf \right\|_{ L^\infty_t L^2_x } &\sleq c^{r-1} \|\square_k D_i^{1/2} f\|_{ L^{1,2}_{x_i;(x_j)_{j \neq i},t} }, \label{smoothestr2} \\
\left\| \square_k \d^{2(r-1)}_{x_i} \psi \right\|_{ L^{\infty,2}_{x_i;(x_j)_{j \neq i},t} } &\sleq \|\square_k f\|_{ L^{1,2}_{x_i;(x_j)_{j \neq i},t} }, \label{smoothestr3} \\
\left\| \square_k \d_{x_i}^{2(r-1)}\cA_rf \right\|_{ L^{\infty,2}_{x_i;(x_j)_{j\neq i},t} } &\sleq \|\square_k f\|_ {L^{1,2}_{x_i;(x_j)_{j\neq i},t} }, \label{smoothestfinr1} \\
\left\| \square_k \d^{2(r-1)}_{x_i} \cA_rf \right\|_{ L^\infty_t L^2_x } &\sleq c^{r-1} \la|k_i|\ra^{r-3/2} \|\square_k f\|_{ L^{1,2}_{x_i;(x_j)_{j \neq i},t} }. \label{smoothestfinr2}
\end{align}
\end{remark}

\begin{remark}
We point out the fact that we have worked out smoothing estimates only in the 
higher frequencies. As in \cite{ruzhansky2016global}, only these smoothing 
estimates are needed in order to discuss the well-posedness of \eqref{highordschr}.
\end{remark}

\subsubsection{Strichartz estimates} \label{strestsubsec}

By exploiting \eqref{strhighschr} we can deduce Strichartz estimates for 
solutions of \eqref{highordschr} combined with $\square_k$-decomposition 
operators. 

\begin{proposition}
Let $r \geq 1$, $d \geq 2$, $c \geq 1$, $t>0$. 
Let $(p,q)$ and $(a,b)$ be order-$r$ admissible pairs. 
Then for any $0 < T \sleq c^{2(r-1)}$ and for any $k \in \Z^d$ 
with $|k| \sgeq K$ ($K=K(c)$ is defined in \eqref{threshold})
\begin{align}
\| \square_k \cU_r(t)\phi_0 \|_{L^p([0,T])L^q(\R^d)} &\sleq 
c^{ d \left( 1-\frac{1}{r} \right) \left(\frac{1}{2} -\frac{1}{q} \right) } \|\square_k\phi_0\|_{L^2(\R^d)} \nonumber \\ 
&= c^{ \left(1-\frac{1}{r}\right) \frac{2r}{p} } \|\square_k\phi_0\|_{L^2(\R^d)}, \label{strhighschrdec} \\
\left\| \square_k \int_0^t \cU_r(t-\tau)\phi(\tau) \di\tau \right\|_{L^p([0,T])L^q(\R^d)} &\sleq c^{\left(1-\frac{1}{r}\right)2r\left(\frac{1}{p}+\frac{1}{a}\right)} \|\square_k\phi\|_{L^{a'}([0,T])L^{b'}(\R^d)}. \label{retstrhighschrdec} 
\end{align}
\end{proposition}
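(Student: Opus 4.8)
The plan is to deduce \eqref{strhighschrdec}--\eqref{retstrhighschrdec} from the non-localized Strichartz estimates \eqref{strhighschr}--\eqref{retstrhighschr} already in hand, exploiting that every operator involved is a Fourier multiplier. The two preliminary facts I would record are: (i) since $\sigma_k(\xi)=\sigma(\xi-k)$ with $\sigma\in C^\infty_c$, the convolution kernel of $\square_k=\cF^{-1}\sigma_k\cF$ is a modulated translate of the fixed Schwartz function $\cF^{-1}\sigma$, so $\square_k$ is bounded on $L^q(\R^d)$ for every $q\in[1,\infty]$ with operator norm independent of $k$ and of $c$; and (ii) $\cU_r(t)$ is a Fourier multiplier whose symbol depends only on $|\xi|$ (namely $e^{it(c^2+P_r(|\xi|))}$ in the notation of \eqref{genpol}), so $\square_k$ commutes with $\cU_r(t)$ for every $t$, and therefore also with the Duhamel operator $\cA_r$. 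Consequently $\square_k\,\cU_r(t)\phi_0=\cU_r(t)\,(\square_k\phi_0)$ and $\square_k\,\cA_r\phi=\cA_r(\square_k\phi)$.

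Granting this, \eqref{strhighschrdec} is just \eqref{strhighschr} applied to the datum $\square_k\phi_0\in L^2(\R^d)$, and \eqref{retstrhighschrdec} is \eqref{retstrhighschr} applied to the inhomogeneity $\square_k\phi$; the second form of the constant in \eqref{strhighschrdec} is obtained from the order-$r$ admissibility relation \eqref{admhighschr}, which yields $\tfrac{2r}{p}=d\bigl(\tfrac12-\tfrac1q\bigr)$ and hence $\bigl(1-\tfrac1r\bigr)\tfrac{2r}{p}=d\bigl(1-\tfrac1r\bigr)\bigl(\tfrac12-\tfrac1q\bigr)$. Note that the hypothesis $|k|\sgeq K$ and the exclusion of the usual endpoint pairs are not actually needed for this argument: the former is retained only because in the proof of Proposition \ref{LWPhighordschr} these estimates are always combined with the high-frequency smoothing estimates of Section \ref{smoothsubsec}, and the latter is inherited verbatim from \eqref{strhighschr}--\eqref{retstrhighschr}.

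If one prefers a derivation internal to Section \ref{tdecsubsec} rather than a black-box use of \eqref{strhighschr}, the alternative is: for $|k|\sgeq K$ the cube $\mathrm{supp}\,\sigma_k$ overlaps only $\cO(1)$ Littlewood--Paley shells, all in the range where $\cU_r(t)$ is controlled by its ``$P_=$'' and ``$P_>$'' parts, so that plugging the frequency-localized dispersive bounds \eqref{dispestrmed}--\eqref{dispestrhigh} into the standard $TT^*$ and Christ--Kiselev argument (Hardy--Littlewood--Sobolev in the time variable, with the $t$-integral restricted to $[0,T]$, $T\sleq c^{2(r-1)}$) reproduces \eqref{strhighschrdec}--\eqref{retstrhighschrdec}, the power of $c$ arising as $\bigl(c^{d(1-1/r)}\bigr)^{1/2-1/q}=c^{d(1-1/r)(1/2-1/q)}$ --- ``half'' of the $L^1\to L^\infty$ loss --- exactly as in the passage from \eqref{locdispest} to \eqref{strhighschr}. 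Either way, I do not expect a genuine obstacle here: the only points demanding care are the uniform-in-$k$ boundedness and commutation of $\square_k$ (routine for modulation-space projections) and keeping the admissibility bookkeeping consistent with the $c$-dependent time horizon $T\sleq c^{2(r-1)}$. All the real content has already been produced in \eqref{strhighschr}--\eqref{retstrhighschr} and in the dispersive analysis of Section \ref{tdecsubsec}; this proposition is simply their frequency-localized repackaging.
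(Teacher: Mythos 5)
Your argument is correct and coincides with the paper's (implicit) proof: the paper states the proposition without proof, prefaced only by ``By exploiting \eqref{strhighschr} we can deduce\dots'', i.e.\ exactly your observation that $\square_k$ commutes with the Fourier multipliers $\cU_r(t)$ and $\cA_r$ and is uniformly bounded on $L^q$, so \eqref{strhighschrdec}--\eqref{retstrhighschrdec} are \eqref{strhighschr}--\eqref{retstrhighschr} applied to $\square_k\phi_0$ and $\square_k\phi$, with the second form of the constant coming from \eqref{admhighschr}. Your remark that the restriction $|k|\sgeq K$ is not actually used in this derivation is also accurate.
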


Furthermore, by \eqref{locdispest} we have that
\begin{align*}
\| \square_k \cU_r(t) \|_{L^1 \to L^\infty} &\sleq c^{d \left(1-\frac{1}{r}\right)} \la t \ra^{-d/(2r)}, \; 0 < |t| \sleq c^{2(r-1)},
\end{align*}

and by following closely the argument in Section 5 of \cite{wang2007global} we 
can deduce

\begin{proposition} 
Let $r \geq 1$, $d \geq 2$, $c \geq 1$.
Let $(p,q)$ be a Schr\"odinger admissible pair, then
\begin{align}
\| \cU_r(t)\psi_0 \|_{l^{1,s}_\square(L^p_t([0,T]) L^q_x)} &\sleq c^{\left(1-\frac{1}{r}\right)\frac{2r}{p}} \|\psi_0\|_{M^s_{2,1}}, \; 0 < T \sleq c^{2(r-1)},  \label{strestfrdec} \\
\| \cA_r f \|_{l^{1,s}_\square(L^p_t([0,T]) L^q_x) \cap l^{1,s}_\square(L^\infty_t([0,T]) L^2_x)} &\sleq c^{\left(1-\frac{1}{r}\right)\frac{4r}{p}} \|f\|_{l^{1,s}_\square (L^{p'}([0,T])L^{q'}(\R^d))}. \label{retstrestfrdec}
\end{align}
\end{proposition}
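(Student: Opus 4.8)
The plan is to follow closely the argument of Section~5 of \cite{wang2007global}, keeping careful track of the powers of $c$. Since $\square_k$ is a Fourier multiplier it commutes with $\cU_r(t)$ and with $\cA_r$, and since $\square_k=\square_k\tilde\square_k$ by \eqref{decoprop} while $\square_k$ is bounded on $L^q(\R^d)$ uniformly in $k$, it suffices to establish the single-block bounds
\begin{align*}
\|\cU_r(t)\phi_0\|_{L^p_t([0,T])L^q_x} &\sleq c^{\left(1-\frac1r\right)\frac{2r}{p}}\,\|\phi_0\|_{L^2}, \\
\|\cA_r f\|_{L^p_t([0,T])L^q_x\,\cap\,L^\infty_t([0,T])L^2_x} &\sleq c^{\left(1-\frac1r\right)\frac{4r}{p}}\,\|f\|_{L^{p'}_t([0,T])L^{q'}_x},
\end{align*}
uniformly over $\phi_0,f$ whose Fourier transforms are supported in a cube of side $\cO(1)$ centered at $k\in\Z^d$; summing these over $k$ with the weights $\la k\ra^s$ then gives \eqref{strestfrdec}--\eqref{retstrestfrdec}. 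I would split the analysis according to whether $|k|\sgeq K$ or $|k|\sleq K$, with $K=K(c)$ as in \eqref{threshold}, so that $2^K\sim c$.

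For the high-frequency blocks $|k|\sgeq K$ I would use \eqref{strhighschrdec} and \eqref{retstrhighschrdec}. These are phrased for order-$r$-admissible pairs, whereas $(p,q)$ in the statement is Schr\"odinger-admissible; the key observation is that if $(p,q)$ satisfies $\frac2p+\frac dq=\frac d2$ then $(rp,q)$ satisfies \eqref{admhighschr}, so \eqref{strhighschrdec} yields $\|\square_k\cU_r(t)\phi_0\|_{L^{rp}_t L^q_x}\sleq c^{(1-1/r)\frac2p}\|\square_k\phi_0\|_{L^2}$. Applying H\"older in time on $[0,T]$, with $T\sleq c^{2(r-1)}$, costs a factor $T^{(r-1)/(rp)}\sleq c^{2(r-1)^2/(rp)}$, and one checks that $(1-\frac1r)\frac2p+\frac{2(r-1)^2}{rp}=\frac{2(r-1)}{p}=(1-\frac1r)\frac{2r}{p}$, which is exactly the desired exponent. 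Similarly, using \eqref{retstrhighschrdec} with the pair $(rp,q)$ on both input and output and H\"older in time on each factor produces the exponent $(1-\frac1r)\frac{4r}{p}$, while the Christ--Kiselev lemma together with the dual of the homogeneous estimate handles the $L^\infty_tL^2_x$ component and the truncation $\int_0^t$.

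For the low- and medium-frequency blocks $|k|\sleq K$ I would instead use the dispersive estimates of Remark~\ref{genrcase} and the $L^2$-unitarity of $\cU_r(t)$. On the frequency support of such a block the operator coincides, up to finitely many neighbouring blocks, with $P_<\cU_r(t)$ or with $P_=\cU_r(t)$, for which \eqref{dispestrlow} and \eqref{dispestrmed} give $\|\cdot\|_{L^1\to L^\infty}\sleq c\,\min(|t|^{-d/2},|t|^{-d/2+1/6})$, the factor $c$ occurring only in the medium block and only for $r\geq2$. Feeding this and $\|\cU_r(t)\|_{L^2\to L^2}\sleq1$ into the Keel--Tao abstract Strichartz theorem with decay exponent $\sigma=d/2$ produces \emph{precisely} the Schr\"odinger-admissible pairs, with a loss of at most $c^{2/(dp)}$; since $\frac2{dp}\leq\frac{2(r-1)}{p}$ whenever $r\geq1+\frac1d$ (in particular for every integer $r\geq2$ with $d\geq2$), and there is no loss at all when $r=1$ (which is the classical frequency-uniform Schr\"odinger estimate), this stays within the claimed bound. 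The inhomogeneous estimate for these blocks follows by the same $TT^\ast$/Christ--Kiselev argument.

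The step I expect to be the main obstacle is the medium-frequency regime: one must check that localizing with $\square_k$ does not spoil the uniform dispersive bound (it does not, since $\square_k$ is convolution against a fixed profile modulated by $k$, hence $\|\square_k\|_{L^\infty\to L^\infty}\sleq1$ uniformly in $k$), and, above all, that the extra factor $c$ carried by the singular sphere $|\xi|\sim c$ is absorbed into $c^{(1-1/r)2r/p}$ --- which is exactly why the condition $r\geq1+1/d$ (equivalently $r\geq2$ for $d\geq2$) enters. Once all blocks are controlled uniformly in $k$, resumming over $k\in\Z^d$ with the weights $\la k\ra^s$ and using the uniform $L^q$-boundedness of $\square_k$ finishes the proof.
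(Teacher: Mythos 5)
Your proposal is correct, and the exponent arithmetic checks out: if $(p,q)$ is Schr\"odinger admissible then $(rp,q)$ satisfies \eqref{admhighschr}, and the H\"older loss $T^{1/p-1/(rp)}\sleq c^{2(r-1)^2/(rp)}$ combined with the factor $c^{(1-1/r)2/p}$ coming from \eqref{strhighschrdec} applied to the pair $(rp,q)$ gives exactly $c^{(1-1/r)2r/p}$, and likewise $c^{(1-1/r)4r/p}$ for the retarded estimate. The paper's own derivation is organized differently: it records the uniform-in-$k$ localized dispersive bound $\|\square_k\cU_r(t)\|_{L^1\to L^\infty}\sleq c^{d(1-1/r)}\la t\ra^{-d/(2r)}$, obtained directly from \eqref{locdispest}, and then runs the $TT^\ast$ argument of Section 5 of \cite{wang2007global} on $[0,T]$, where Young's inequality applied to the non-integrable kernel $\la t-s\ra^{-2/(rp)}$ produces the factor $T^{(1-1/r)2/p}\sleq c^{4(r-1)^2/(rp)}$; both routes trade the length of the admissible time interval $T\sleq c^{2(r-1)}$ against the mismatch between the order-$r$ and Schr\"odinger scalings in exactly the same way, so the substance is equivalent even though the packaging differs. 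Two remarks on your version: your high-frequency argument never actually uses $|k|\sgeq K$, since \eqref{strhighschr}--\eqref{retstrhighschr} are unlocalized and may be applied to $\square_k\psi_0$ for every $k$, so the separate Keel--Tao treatment of the blocks $|k|\sleq K$ is dispensable (though not wrong --- your check that the medium-frequency loss $c^{2/(dp)}$ from \eqref{dispestrmed} is dominated by $c^{2(r-1)/p}$ for $r\geq 2$, $d\geq 2$, with no loss at all when $r=1$, is accurate). And for the $L^\infty_tL^2_x$ component of \eqref{retstrestfrdec} the dual homogeneous estimate should likewise be taken with the order-$r$ pair $(rp,q)$ followed by the same H\"older step, giving $c^{(1-1/r)2r/p}\leq c^{(1-1/r)4r/p}$; this is implicit in your sketch and causes no difficulty.
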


\subsubsection{Maximal function estimates} \label{maxfsubsec}

In this subsection we study the maximal function estimates for the semigroup 
$\cU_r(t)$ and the integral operator $\int_0^t \cU_r(t-\tau) \cdot \di\tau$ 
in anisotropic Lebesgue spaces. To do this, we will need the time decay 
properties proved in Sec. \ref{tdecsubsec}. As always, we first prove results 
for the case $r=2$, and then we write the modification for the general case.

\begin{lemma}
\begin{enumerate}
\item Let $q \geq 2$, $\frac{8}{d} < q \leq + \infty$ and $k \in \Z^d$ 
with $|k| \sgeq K(c)$, then 
\begin{align} \label{maxfest1h}
\| \square_k \, \cU_2(t)\psi_0 \|_{L^{q,\infty}_{x_i;(x_j)_{j \neq i},t}} &\sleq c^{d/2} \la k \ra^{1/q} \|\square_k\psi_0\|_{L^2}, \; 0 < |t| \sleq c^2, \; \forall i=1,\ldots,d.
\end{align}

\item Let $q \geq 2$, $\frac{4}{d} < q \leq + \infty$ and $k \in \Z^d$ 
with $|k| \sleq K(c)$, then 
\begin{align} \label{maxfest1l}
\| \square_k \, \cU_2(t)\psi_0 \|_{L^{q,\infty}_{x_i;(x_j)_{j \neq i},t}} &\sleq c \la k \ra^{1/q} \|\square_k\psi_0\|_{L^2}, \; \forall i=1,\ldots,d.
\end{align}
\end{enumerate}
\end{lemma}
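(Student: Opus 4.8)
By the symmetry of the statement under permutations of the coordinates it suffices to treat $i=1$; write $x=(x_1,\bar x)$ and $\xi=(\xi_1,\bar\xi)$ with $\bar x,\bar\xi\in\R^{d-1}$. The plan is to combine two ingredients: the $L^2$-isometry $\|\cU_2(t)\|_{L^2\to L^2}=1$, and the frequency-localised time-decay estimates of Sect.~\ref{tdecsubsec} for the relevant range of $|k|$, namely $\|\square_k\cU_2(t)\|_{L^1\to L^\infty}\sleq c^{d/2}|t|^{-d/4}$ for $0<|t|\sleq c^2$ in the high-frequency case (this is \eqref{dispesthigh}), and $\|\square_k\cU_2(t)\|_{L^1\to L^\infty}\sleq c\min(|t|^{-d/2},|t|^{-d/2+1/6})$ together with the faster decay $\sleq|t|^{-d/2}$ of the genuinely low part in the other case (\eqref{dispestmed}, \eqref{dispestlow}). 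Already at this stage the powers $c^{d/2}$ in \eqref{maxfest1h} and $c$ in \eqref{maxfest1l} are accounted for, as are the thresholds $q>8/d$ and $q>4/d$, which reflect the respective decay exponents $d/4$ and $d/2$.

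The first step is a reduction to a one-dimensional maximal estimate. Factor $\cU_2(t)=e^{ic^2t}\,\cF^{-1}e^{itP_2(|\xi|)}\cF$ with $P_2(|\xi|)=|\xi|^2-|\xi|^4/c^2$, and exploit the product structure $\sigma_k(\xi)=\eta_{k_1}(\xi_1)\prod_{j\ge2}\eta_{k_j}(\xi_j)$; since $|k_1|=|k|_\infty$, one has $|\xi_1|=|\xi|_\infty\sim|\xi|$ on $\mathrm{supp}\,\sigma_k$, so in particular $\xi_1$ keeps a fixed sign there (the finitely many cubes with $|k|$ bounded being handled directly from the benign low-frequency estimate). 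Moving the $\bar\xi$ (hence $\bar x$) integrations to the outside by Minkowski's inequality, and absorbing them at the end by H\"older/Bernstein on the unit-measure supports of the $\eta_{k_j}$, $j\ge2$ (which costs only an $O(1)$ factor; the $\la k\ra^{1/q}$ in the final bound is the genuinely one-dimensional loss in the $x_1$-variable), one is left with a bound, uniform in $\bar\xi$ and $t\in I$, for the one-dimensional operator $g\mapsto\int_\R e^{ix_1\xi_1+itP_2(|\xi|)}\eta_{k_1}(\xi_1)g(\xi_1)\,d\xi_1$ in $L^q_{x_1}$ with a sup in $t$. For this I would change variables $\theta=P_2(|\xi|)$ ($\bar\xi$ being a parameter), legitimate because $\partial_{\xi_1}P_2(|\xi|)=2\xi_1(1-2|\xi|^2/c^2)$ stays bounded away from zero on the relevant support — which is exactly what the threshold \eqref{threshold} buys, keeping us off the critical sphere $|\xi|=c/\sqrt2$ — turning the operator into a $\theta\!\to\!t$ Fourier transform, for which Plancherel handles the $L^2$ side and $\sup_t|\widehat{h_{x_1}}|\le\|h_{x_1}\|_{L^1_\theta}$ the $L^\infty$ side. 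A standard $TT^*$/real-interpolation argument of the type used for local-smoothing and maximal-function estimates (Kenig--Ponce--Vega; \cite{wang2007global,wang2009global}), fed with the $L^2$-isometry and with this one-dimensional dispersive decay, then produces the claimed $L^q_{x_1}L^\infty_t$ bound; this is the step where $q\ge2$ and the lower thresholds on $q$ enter.

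The hard part will be the second case, where $\mathrm{supp}\,\square_k$ may sit near, or straddle, the singular sphere $|\xi|=c/\sqrt2$ (and the inflection sphere $|\xi|=c/\sqrt6$), so that the change of variables above degenerates. There one must instead feed in the medium-frequency bound \eqref{dispestmed}, whose proof in Sect.~\ref{tdecsubsec} has already absorbed these stationary points via van der Corput's lemma, at the cost of the factor $c$ and the weaker decay $|t|^{-d/2+1/6}$; redoing the $TT^*$/interpolation step with this weaker input is precisely what forces the power $c$ (rather than $c^{d/2}$) and the threshold $q>4/d$ in \eqref{maxfest1l}. One must of course verify that all implicit constants are independent of $c$. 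Finally, the case $r>2$ should go through verbatim with $P_2$ replaced by $P_r$ of \eqref{genpol}: the role of the critical sphere is then played by the real roots of $P_r'$, whose moduli lie between $1$ and $\cO(\epsilon^{-1/2})$ by the Rouch\'e/Fujiwara bounds of Remark~\ref{genrcase}, and one uses \eqref{dispestrlow}--\eqref{dispestrhigh} in place of \eqref{dispestlow}--\eqref{dispesthigh}.
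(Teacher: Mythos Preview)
Your outline conflates two different mechanisms and, as written, does not produce the crucial $L^q_{x_1}$-integrability. After the change of variable $\theta=P_2(|\xi|)$ (with $\bar\xi$ fixed) the inner integral becomes $\int e^{it\theta}\,e^{ix_1\xi_1(\theta,\bar\xi)}\,g(\theta,\bar\xi)\,d\theta$; your bound $\sup_t|\widehat{h_{x_1}}(t)|\le\|h_{x_1}\|_{L^1_\theta}$ is \emph{independent of $x_1$} (since $|e^{ix_1\xi_1(\theta)}|=1$), so it yields an $L^\infty_{x_1}L^\infty_{\bar x,t}$ bound but never a finite $L^q_{x_1}$-norm for $q<\infty$, and in particular cannot explain the factor $\la k\ra^{1/q}$ nor the threshold $q>8/d$. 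The $\theta=P_2$ change of variable together with Plancherel is exactly the engine of the \emph{local smoothing} estimate (Proposition~\ref{smoothprop1}); it is not the right tool for the maximal estimate. A secondary issue: you assume $|k_1|=|k|_\infty$, but the statement asserts \eqref{maxfest1h}--\eqref{maxfest1l} for \emph{every} $i$, regardless of which component of $k$ is largest, so your change of variable could degenerate ($\xi_1$ crossing $0$) even in the high-frequency case.

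The paper's argument is structurally different. By $TT^\ast$ one reduces \eqref{maxfest1h} to the kernel bound
\[
\Bigl\|\int_{\R^d} e^{i\langle x,\xi\rangle+it(c^2+P_2(|\xi|))}\sigma_k(\xi)\,d\xi\Bigr\|_{L^{q/2}_{x_1}L^\infty_{\bar x,t}}\sleq c^{d}\la k\ra^{2/q}.
\]
One then obtains a \emph{pointwise} bound on the kernel by combining two inputs: (i) the frequency-localised dispersive decay \eqref{dispesthigh}, interpolated with the trivial $O(1)$ bound, gives $|\square_k\cU_2(t)\cF^{-1}\sigma_k|\sleq c^{d/2}(1+\la k\ra^4|t|)^{-d/4}$; (ii) integration by parts in $\xi_1$ (non-stationary phase) gives $\sleq c^{d/2}\la x_1\ra^{-2}$ whenever $|x_1|\sgeq 1+|t|\la k\ra^5$. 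In the complementary region $|x_1|\sleq 1+|t|\la k\ra^5$ the time decay converts into $x_1$-decay, $c^{d/2}(1+|x_1|\la k\ra^{-1})^{-d/4}$. Taking $\sup_{\bar x,t}$ and then the $L^{q/2}_{x_1}$-norm of the sum of these two pointwise bounds is precisely what produces $\la k\ra^{2/q}$ and forces $q>8/d$; the case $|k|\sleq K(c)$ is identical with decay exponent $d/2$ (hence $q>4/d$ and the factor $c$ from \eqref{dispestmed}). This trade-off between $t$-decay and $x_1$-decay via the non-stationary-phase region is the missing idea in your proposal.
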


\begin{proof}
Clearly it suffices to show the thesis for $i=1$; recall that for any 
$x=(x_1,\ldots,x_d) \in \R^d$ we denote $\bar x = (x_2,\ldots,x_d)$. 
By a standard $TT^\star$ argument, \eqref{maxfest1h} is equivalent to
\begin{align} \label{maxfest1th}
\left\| \int_{\R^d} e^{i \la x,\xi \ra} e^{it(c^2+P_2(|\xi|)) }\sigma_k(\xi)\di\xi \right\|_{L^{q/2,\infty}_{x_1;\bar x,t } } &\sleq \la k \ra^{2/q}.
\end{align}
If $|k| \sgeq K(c)$, then
\begin{align}
\| \cF^{-1}e^{it(c^2+P_2(|\xi|))}\sigma_k(\xi) \|_{L^\infty_x} &\stackrel{\eqref{dispesthigh}}{\sleq} c^{d/2} \la k \ra^{-d} |t|^{-d/4}, \; 0 < |t| \sleq c^2; \label{estdec1}
\end{align}
on the other hand
\begin{align}
\| \square_k \cU_2(t)\cF^{-1}\sigma_k \|_{L^\infty_{t,x}} &\sleq \| \square_k \cU_2(t)\cF^{-1}\sigma_k \|_{L^\infty_t L^2_x} \sleq 1. \label{estdec2}
\end{align}
If we combine \eqref{estdec1} and \eqref{estdec2}, we obtain
\begin{align}
|\square_k\cU_2(t)\cF^{-1}\sigma_k| &\sleq c^{d/2} (1+\la k \ra^4 |t|)^{-d/4}, \; 0 < |t| \sleq c^2. \label{estdec3}
\end{align}

Now, if $|x_1| \sgeq 1+|t|\la k \ra^5$, by integrating by parts we get
\begin{align} \label{estdec4}
| \square_k \cU_2(t)\cF^{-1}\sigma_k | &\sleq c^{d/2} \la x_1 \ra^{-2} . 
\end{align}
If $|x_1| \sleq 1+|t|\la k \ra^5$, by \eqref{estdec3} we can deduce
\begin{align} \label{estdec5}
| \square_k \cU_2(t)\cF^{-1}\sigma_k | &\sleq c^{d/2} (1+|x_1|\la k \ra^{-1})^{-d/4}
\end{align}
Combining \eqref{estdec4} and \eqref{estdec5} we have
\begin{align} \label{estdec6}
\sup_{\bar x,t}|\square_k \cU_2(t)\cF^{-1}\sigma_k| &\sleq c^{d/2} \la x_1 \ra^{-2} + c^{d/2} (1+|x_1|\la k \ra^{-1})^{-d/4}, 
\end{align}
from which, by taking the $L^{q/2}_{x_1}$ norm on both sides, we obtain 
\eqref{maxfest1th}. The proof for the case $|k| \sleq K(c)$ is similar.
\end{proof}

\begin{lemma}
Let $q \geq 2$, $\frac{8}{d} < q \leq + \infty$ and $k \in \Z^d$ 
with $|k_i| \sgeq K(c)^2$, then 
\begin{align} \label{maxfest2h}
\| \square_k \, \cA_2 f \|_{L^{q,\infty}_{x_i;(x_j)_{j \neq i},t}} &\sleq c^{d/2} \la k_i \ra^{-3/2+1/q} \|\square_kf\|_{L^{1,2}_{x_i;(x_j)_{j \neq i},t}}, \; 0 < |t| \sleq c^2, \; \forall i=1,\ldots,d.
\end{align}
\end{lemma}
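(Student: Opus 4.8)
The plan is to obtain \eqref{maxfest2h} by a $TT^\star$--type factorization of the non-causal version of $\cA_2$ through $L^2_x$, combining the homogeneous maximal function estimate \eqref{maxfest1h} with the local smoothing estimate \eqref{smoothest1}, and then transferring the bound to the causal operator $\cA_2$ via the Christ--Kiselev lemma. As usual it suffices to treat $i=1$ and to replace $f$ by $\square_k f$ (inserting $\tilde\square_k$ where convenient, by \eqref{decoprop}). Since $\cU_2$ is a unitary group, $\cU_2(t-\tau)=\cU_2(t)\cU_2(-\tau)$, so I would first consider the non-causal operator $\tilde{\cA}_2 f(t):=\cU_2(t)\,\psi_\infty$, with $\psi_\infty:=\int_{\R}\cU_2(-\tau)f(\tau)\,\di\tau\in L^2_x$ independent of $t$.

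For $\tilde{\cA}_2$: applying \eqref{maxfest1h} to the datum $\square_k\psi_\infty$ (legitimate since $|k|\ge|k_1|\sgeq K(c)^2\ge K(c)$) gives
\[
\bigl\|\square_k\tilde{\cA}_2 f\bigr\|_{L^{q,\infty}_{x_1;(x_j)_{j\neq1},t}}\;\sleq\;c^{d/2}\,\la k_1\ra^{1/q}\,\|\square_k\psi_\infty\|_{L^2}.
\]
The $L^2$ factor I would estimate by duality: $\|\square_k\psi_\infty\|_{L^2}=\sup_{\|g\|_{L^2}\le1}\bigl|\langle\square_k f,\cU_2(\cdot)g\rangle_{L^2_{t,x}}\bigr|$, and by H\"older in the anisotropic norms
\[
\bigl|\langle\square_k f,\cU_2(\cdot)g\rangle_{L^2_{t,x}}\bigr|\;\le\;\|\square_k f\|_{L^{1,2}_{x_1;(x_j)_{j\neq1},t}}\;\bigl\|\square_k\cU_2(\cdot)g\bigr\|_{L^{\infty,2}_{x_1;(x_j)_{j\neq1},t}},
\]
where the last factor is a local smoothing quantity. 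When $|k_1|\sgeq c$ it is bounded, after dividing by $D_{x_1}^{3/2}\sim\la k_1\ra^{3/2}$ on $\mathrm{supp}\,\sigma_k$, by $c\,\la k_1\ra^{-3/2}\|\square_k g\|_{L^2}$ via \eqref{smoothest1}; in the range $K(c)^2\sleq|k_1|\sleq c$ (where $|k_1|$ lies in the medium-frequency block) one instead reruns the kernel analysis underlying \eqref{maxfest1h} with \eqref{dispestmed} in place of \eqref{dispesthigh}. Combining the displays yields \eqref{maxfest2h} for $\tilde{\cA}_2$, the hypothesis $q>8/d$ being inherited from \eqref{maxfest1h}.

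Finally, to pass from $\tilde{\cA}_2$ to $\cA_2$: viewing $\tilde{\cA}_2$ as an operator with time-kernel $\cU_2(t-\tau)$ and $\cA_2$ as its causal truncation $\int_{\tau<t}$, the mixed-norm form of the Christ--Kiselev lemma (as used in \cite{wang2007global},\cite{wang2009global},\cite{ruzhansky2016global}) applies, since the time-integrability exponent of the target $L^{q,\infty}_{x_1;(x_j)_{j\neq1},t}$ (namely $+\infty$) strictly exceeds that of the source $L^{1,2}_{x_1;(x_j)_{j\neq1},t}$ (namely $2$); this gives \eqref{maxfest2h} for $\cA_2$ with the same constant. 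I expect the main obstacle to be not any individual ingredient but the bookkeeping of the powers of $c$ and $\la k_1\ra$ through the $TT^\star$ composition --- in particular, reconciling the factor $c^{d/2}$ of \eqref{maxfest1h} with the exponent asserted in \eqref{maxfest2h}, which is precisely where one must keep the frequency regimes $|k_1|\sgeq c$ and $K(c)^2\sleq|k_1|\sleq c$ apart and optimize, as in \cite{ruzhansky2016global}. The case $r>2$ is identical, with \eqref{dispestrmed}--\eqref{dispestrhigh} and \eqref{smoothestr1} replacing \eqref{dispestmed}--\eqref{dispesthigh} and \eqref{smoothest1}.
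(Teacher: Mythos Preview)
Your $TT^\star$ factorization for the \emph{non-causal} operator $\tilde{\cA}_2 f(t)=\cU_2(t)\int_{\R}\cU_2(-\tau)f(\tau)\,\di\tau$ is correct, and in fact this is exactly how the paper handles the pieces $II$ and $III$ of its decomposition (see the display ending in \eqref{boundIII}, where the change of variables $\xi_1=c^{1/2}a$ reduces those terms to a homogeneous maximal estimate composed with the dual smoothing bound). The genuine gap is the transfer from $\tilde{\cA}_2$ to the causal $\cA_2$ via Christ--Kiselev. That lemma requires the time variable to be the \emph{outermost} integrability index in both source and target, so that restriction to a time sub-interval commutes with the norm. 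Here the target is $L^q_{x_1}L^\infty_{\bar x,t}$ and the source $L^1_{x_1}L^2_{\bar x,t}$: in both, $x_1$ sits outside and $t$ is buried inside. The Whitney-decomposition proof of Christ--Kiselev collapses in this anisotropic configuration, and the references you invoke do not contain a version that covers it --- on the contrary, \cite{ruzhansky2016global} proves the corresponding estimate by precisely the direct symbol analysis the paper reproduces (splitting $(\bar\xi,\tau)$ into the regions $\E_1,\E_2,\E_3$, taking a partial-fraction decomposition of $(c^2+P_2(|\xi|)-\tau)^{-1}$, inserting the cutoff $\tilde\eta_{k_1}(ac^{1/2})$, and estimating the four pieces $I$--$IV$ by explicit kernel bounds), exactly because a Christ--Kiselev shortcut is unavailable here. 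At best, Minkowski lets you pass to the weaker pair $L^\infty_t L^q_{x_1}L^\infty_{\bar x}$ on the target and $L^2_t L^1_{x_1}L^2_{\bar x}$ on the source, where Christ--Kiselev does apply; but the resulting estimate is strictly weaker than \eqref{maxfest2h} on both sides, and the genuine maximal norm $L^q_{x_1}L^\infty_{\bar x,t}$ is what is needed downstream (e.g.\ in the nonlinear mapping estimate \eqref{estnlin1}).

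A secondary issue: the hypothesis is $|k_1|\sgeq K(c)^2\sim(\log c)^2$, whereas \eqref{smoothest1} needs $|k_1|\sgeq c$, so your duality step does not cover the full stated range. Your fallback of ``rerunning the kernel analysis'' in the intermediate regime amounts to doing what the paper does anyway, not a $TT^\star$ shortcut.
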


\begin{proof}
It suffices to prove the case $i=1$. 
Recall that the solution of \eqref{inhom} is of the form 
\begin{align*}
\psi &= \cF^{-1}_{\tau,\xi} \frac{1}{c^2+P_2(|\xi|)-\tau}\cF_{t,x}f,
\end{align*}
hence its frequency localization can be written as
\begin{align*}
\square_k \, \psi &= \cF^{-1}_{\tau,\xi} \frac{1}{c^2+P_2(|\xi|)-\tau}(\cF_{t,x}\square_k \, f)(\tau,\xi).
\end{align*}
For convenience, we introduce the following regions
\begin{align*}
\E_1 &= \{\tau-c^2 \leq -c^2/4 \}, \\
\E_2 &= \left\{-c^2/4 \leq \tau-c^2 \leq |\bar\xi|^2 \left( -\frac{|\bar\xi|^2}{c^2}+1 \right) \right\}, \\
\E_3 &= \left\{ \tau-c^2 \geq |\bar\xi|^2 \left( -\frac{|\bar\xi|^2}{c^2}+1 \right) \right\},
\end{align*}
and we make the following decomposition
\begin{align} \label{maxfdec2}
c^2+P_2(|\xi|)-\tau &= 
\begin{cases}
\left( \frac{|\xi|^2}{c} + \tau_2(c,\tau) \right) \left( -\frac{\xi_1}{c^{1/2}} + a \right) \left( \frac{\xi_1}{c^{1/2}} + a \right)  & (\bar\xi,\tau) \in \E_1, \\
\left( \frac{|\xi|^2}{c} + \tau_2(c,\tau) \right) \left( -\frac{|\xi|^2}{c} + \tau_2(c,\tau)+c \right), & (\bar\xi,\tau) \in \E_2, \\
- \left( \frac{|\xi|^2}{c} - \frac{c}{2} \right)^2 + \left(\frac{5}{4} c^2- \tau\right), & (\bar\xi,\tau) \in \E_3, \\
\end{cases}
\end{align}
where $a=a(c,\bar\xi,\tau):= ( \tau_2(c,\tau)- |\bar\xi|^2/c + c)^{1/2}$.
We denote 
\begin{align*}
\square_k \, \psi_i &= \cF^{-1}_{\tau,\xi} \frac{ \chi_{\E_i}(\bar\xi,\tau) }{c^2+P_2(|\xi|)-\tau}(\cF_{t,x}\square_k \, f)(\tau,\xi), \; i=1,2,3.
\end{align*}
First, we estimate $\square_k \, \psi_1$. Set $\tilde\eta_{k_1}(\xi_1) = \sum_{|l| \leq 10} \eta_{k_1+l}(\xi_1)$. First we notice that 
\begin{align*}
\frac{ \chi_{\E_1}(\bar\xi,\tau) }{c^2+P_2(|\xi|)-\tau} &= \frac{ \chi_{\E_1}(\bar\xi,\tau) }{ (2\tau_2(c,\tau)+c) \left( \frac{|\xi|^2}{c} + \tau_2(c,\tau) \right) }  \\
&\; \; + \frac{ \chi_{\E_1}(\bar\xi,\tau) }{ 2a(2\tau_2(c,\tau)+c) } \left( \frac{1}{ -\frac{\xi_1}{c^{1/2}} + a } + \frac{1}{ \frac{\xi_1}{c^{1/2}} + a } \right) \\
&=: \sum_{j=1}^3 A_j(c,\xi,\tau). 
\end{align*}
According to the above decomposition, we can rewrite $\square_k \, \psi_1$ as
\begin{align*}
& \square_k \, \psi_1 \\
&= \cF^{-1}_{\tau,\xi} \frac{ \chi_{\E_1}(\bar\xi,\tau) \tilde\eta_{k_1}(ac^{1/2}) }{c^2+P_2(|\xi|)-\tau}(\cF_{t,x}\square_k \, f)(\tau,\xi) + \cF^{-1}_{\tau,\xi} \frac{ \chi_{\E_1}(\bar\xi,\tau) (1-\tilde\eta_{k_1}(ac^{1/2})) }{c^2+P_2(|\xi|)-\tau}(\cF_{t,x}\square_k \, f)(\tau,\xi) \\
&= \sum_{j=1}^3 \cF^{-1}_{\tau,\xi} \chi_{\E_1}(\bar\xi,\tau) A_j(\xi,\tau) \tilde\eta_{k_1}(ac^{1/2}) (\cF_{t,x}\square_k \, f)(\tau,\xi) \\
&\; \; \; + \cF^{-1}_{\tau,\xi} \frac{ \chi_{\E_1}(\bar\xi,\tau) (1-\tilde\eta_{k_1}(ac^{1/2})) }{c^2+P_2(|\xi|)-\tau}(\cF_{t,x}\square_k \, f)(\tau,\xi) \\
&=: I + II + III + IV.
\end{align*}
 
\emph{Case $k_1 \sgeq K(c)^2$}: first, we estimate II. Let $\tilde\sigma_k$ be as in \eqref{decop}, then
\begin{align*}
II &= \int_{I \times \R^d} \frac{ e^{it\tau+i \la \bar x,\bar\xi \ra} \chi_{\E_1}(\bar\xi,\tau) }{2a(2\tau_2(c,\tau)+c)} \tilde\sigma_{\bar k}(\bar\xi) \tilde\eta_{k_1}(ac^{1/2}) \widehat{\square_k f(y_1,\cdot)}(\bar\xi,\tau) c^{1/2} e^{i(x_1-y_1)ac^{1/2}} \; sgn(x_1-y_1) \di\bar\xi \di y_1 \di\tau.
\end{align*}

By changing variable, $\xi_1=c^{1/2} a(c,\bar\xi,\tau)$, and by setting 
$\tilde\rho_k(\xi) =  \tilde\sigma_{\bar k}(\bar\xi) \tilde\eta_{k_1}(\xi_1)$, 
we obtain
\begin{align*}
|II| &\sleq \left| \int \di y_1 \; sgn(x_1-y_1) \int e^{it(c^2+P_2(|\xi|))} e^{i(x_1-y_1)\xi_1 + i \la \bar x,\bar\xi \ra} \tilde\rho_k(\xi) \widehat{\square_k f(y_1,\cdot)}(c^2+P_2(|\xi|),\tau) \di\xi \right|,
\end{align*}
and by applying \eqref{maxfest1h} we get
\begin{align}
\| II \|_{L^{q,\infty}_{x_1;\bar x,t}} &\sleq \int \di y_1 \left\| \int e^{it(c^2+P_2(|\xi|))} e^{i(x_1-y_1)\xi_1 + i \la \bar x,\bar\xi \ra} \tilde\rho_k(\xi) \widehat{\square_k f(y_1,\cdot)}(c^2+P_2(|\xi|),\tau) \di\xi \right\|_{L^{q,\infty}_{x_1;\bar x,t}} \nonumber \\
&\sleq c^{d/2} \la k_1 \ra^{1/q} \int \| \tilde\rho_k(\xi) \widehat{\square_k f(y_1,\cdot)}(c^2+P_2(|\xi|),\tau) \|_{L^2_\xi} \di y_1 \nonumber \\
&\stackrel{\eqref{smoothest2},\eqref{danest}}{\sleq} c \, c^{d/2} \la k_1 \ra^{1/q-3/2} \|\square_k \, f\|_{L^{1,2}_{x_1;\bar x,t}}. \label{boundIII}
\end{align}

Since $k_1>0$, $III$ has the same upper bound as in \eqref{boundIII}. 

Now we estimate $IV$: first notice that 
\begin{align*}
IV &= \int \di y_1 \int e^{it\tau + i \la \bar x, \bar\xi \ra} \tilde\sigma_{\bar k}(\bar\xi) \widehat{\square_k f(y_1,\cdot)}(\bar\xi,\tau) \; K(x_1-y_1,a,\bar\xi) \di\bar\xi, \\
K(x_1,a,\bar\xi) &= \chi_{\E_1}(\bar\xi,\tau) (1-\tilde\eta_{k_1}(ac^{1/2})) \int \frac{\sum_{|l| \leq 1} \eta_{k_1+l}(\xi_1) e^{ix_1 \xi_1} }{c^2+P_2(|\xi|)-\tau} \di\xi_1.
\end{align*}
By Young's inequality for convolutions, H\"older's inequality and Minkowski's 
inequality we have 
\begin{align*}
\| IV \|_{L^{q,\infty}_{x_1;\bar x,t}} &\leq \left\| \int \| \tilde\sigma_{\bar k}(\bar\xi) \widehat{\square_k f(y_1,\cdot)}(\bar\xi,\tau) \; K(x_1-y_1,a,\bar\xi) \|_{L^1_{\bar\xi,\tau}} \di y_1 \right\|_{L^q_{x_1}} \\
&\leq \| \square_k \, f \|_{L^{1,2}_{x_1;\bar x,t}} \| \tilde\sigma_{\bar k}(\bar\xi)K(x_1,a,\bar\xi) \|_{L^{q,2}_{x_1;\bar\xi,\tau}} \\
&\sleq \| \square_k \, f \|_{L^{1,2}_{x_1;\bar x,t}} | \tilde\sigma_{\bar k}(\bar\xi)K(x_1,a,\bar\xi) \|_{L^\infty_{\bar\xi} L^2_\tau L^q_{x_1}}.
\end{align*}

Integrating by parts it follows that
\begin{align} \label{estK}
\| \tilde\sigma_{\bar k}(\bar\xi) K(x_1,a,\xi) \|_{L^\infty_{\bar\xi} L^2_\tau L^q_{x_1}} &\sleq \sup_{|\xi-k|_\infty \leq 3} \sum_{j=0}^1 \| \chi_{\E_1}(\bar\xi,\tau) (1-\tilde\eta_{k_1}(ac^{1/2})) \d^j_{\xi_1}(c^2+P_2(|\xi|)-\tau)^{-1} \|_{L^2_\tau}.
\end{align}

Noticing that $|\xi - ac^{1/2}| \geq c^{1/2} \geq 1$ in the support set of 
$(1-\tilde\eta_{k_1}(ac^{1/2})) \chi_{|\xi_1-k_1| \leq 3}\d^j_{\xi_1}(c^2+P_2(|\xi|)-\tau)^{-1}$ 
we can deduce from \eqref{maxfdec2} that there is no singularity 
if we integrate \eqref{estK}, and this gives 
\begin{align*}
\| \tilde\sigma_{\bar k}(\bar\xi) K(x_1,a,\bar\xi) \|_{L^\infty_{\bar\xi} L^2_\tau L^q_{x_1}} &\sleq c^{1/2} |k_1|^{-3/2}.
\end{align*}

Now we estimate $I$: we begin by setting
\begin{align} \label{estI1}
J(x_1,a,\bar\xi) &= \chi_{\E_1}(\bar\xi,\tau) \tilde\eta_{k_1}(ac^{1/2}) \int \frac{ \sum_{|l| \leq 1} \eta_{k+l}(\xi_1)e^{ix_1\xi_1} }{ (2\tau_2(c,\tau)+c)\left( \frac{|xi|^2}{c}+\tau_2(c,\tau) \right) } \di\xi_1.
\end{align}
One can check that
\begin{align*}
 I &= \int \di y_1 \int e^{it\tau + i \la \bar x, \bar\xi \ra} \tilde\sigma_{\bar k}(\bar\xi) \widehat{\square_k \, f(y_1,\cdot)}(\bar\xi,\tau) J(x_1-y_1,a,\bar\xi) \di\bar\xi \di\tau.
\end{align*}
Similar to the estimate of $IV$, by Young's, H\"older's and Minkowski's inequalities we obtain
\begin{align*}
\|I\|_{L^{q,\infty}_{x_1;\bar x,t}} &\sleq c^{1/2} \|\square_k f\|_{L^{1,2}_{x_1;\bar x,t}} \|\tilde\sigma_k(\bar\xi)J(x_1,a,\bar\xi)\|_{L^\infty_{\bar\xi} L^2_\tau L^q_{x_1}}.
\end{align*}
By integration by parts we get
\begin{align*}
|J(x_1,a,\xi)| \sleq \frac{ \chi_{\E_1}(\bar\xi,\tau) \tilde\eta_{k_1}(ac^{1/2}) }{(2\tau_2(c,\tau)+c)(1+|x_1|) } \sum_{j=0}^1 \int_{|\xi_1-k_1| \leq 3} \left|\d^j_{\xi_1} \left( \frac{|xi|^2}{c}+\tau_2(c,\tau) \right)^{-1} \right| \di\xi_1.
\end{align*}
Therefore 
\begin{align} \label{estJ}
\|\tilde\sigma_k(\bar\xi)J(x_1,a,\bar\xi)\|_{L^\infty_{\bar\xi} L^2_\tau L^q_{x_1}} &\sleq \sup_{|\xi-k|_\infty \leq 3} \sum_{j=0}^1 \left\| \frac{ \chi_{\E_1}(\bar\xi,\tau) \tilde\eta_{k_1}(ac^{1/2}) }{(2\tau_2(c,\tau)+c)(1+|x_1|) } \sum_{j=0}^1 \left|\d^j_{\xi_1} \left( \frac{|xi|^2}{c}+\tau_2(c,\tau) \right)^{-1} \right| \right\|_{L^2_\tau},
\end{align}
and noticing that $|ac^{1/2}-k_1| \leq 20$ in the support set of $\tilde\eta_{k_1}(ac^{1/2})$, we can deduce that $2\tau_2(c,\tau)+c \sgeq k_1^2$, and finally we obtain
\begin{align} \label{estJ2}
\|\tilde\sigma_k(\bar\xi)J(x_1,a,\bar\xi)\|_{L^\infty_{\bar\xi} L^2_\tau L^q_{x_1}} &\sleq |k_1|^{-2}.
\end{align}

The proof for the case $k \sleq - K(c)^2$ is similar. 
Furthermore, in the estimate of $\square_k \, \psi_2$ and $\square_k \, \psi_3$ 
we can check that there is no singularity in $(c^2+P_2(|\xi|)-\tau)^{-1}$ for 
$|\xi_1| \geq c^{1/2}$ and $(\bar\xi,\tau) \in \E_2 \cup \E_3$. Hence, one can 
argue as in \eqref{estI1}-\eqref{estJ2} and conclude.
\end{proof}

In the last Lemma we proved that 
$\square_k \cA_2: L^{1,2}_{x_1,(x_j)_{j \neq 2},t} \to L^{2,\infty}_{x_1,(x_j)_{j \neq 2},t}$. 
In the next Lemma we show that 
$\square_k \cA_2: L^{1,2}_{x_2,(x_j)_{j \neq 1},t} \to L^{2,\infty}_{x_1,(x_j)_{j \neq 2},t}$.

\begin{lemma}
Let $q \geq 2$, $\frac{8}{d} < q \leq + \infty$, $k \in \Z^d$ 
with $|k_i| \sgeq c$ and $h,i \in \{1,\ldots,d\}$ with $h \neq i$, then 
\begin{align} \label{maxfest3h}
\| \square_k \, \cA_2 f \|_{L^{q,\infty}_{x_h;(x_j)_{j \neq h},t}} &\sleq c^{1+d/2} \la k_i \ra^{-3/2+1/q} \|\square_kf\|_{L^{1,2}_{x_i;(x_j)_{j \neq i},t}}, \; 0 < |t| \sleq c^2.
\end{align}
\end{lemma}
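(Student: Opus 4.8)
The plan is to follow the proof of the preceding Lemma essentially verbatim; the only genuinely new feature is that the maximal function is now measured in the direction $x_h$, transverse to the direction $x_i$ along which $\square_k f$ carries its $L^1$-decay, whereas the symbol $(c^2+P_2(|\xi|)-\tau)^{-1}$ must still be analysed with respect to $\xi_i$, since (as throughout this subsection) $|k_i|=|k|_\infty\sgeq c\gg K(c)$ (recall \eqref{threshold}). By symmetry assume $i=1$, $h=2$, write $\bar x=(x_2,\ldots,x_d)$, $\bar\xi=(\xi_2,\ldots,\xi_d)$, $\psi=\cA_2 f$, so that
\[
\square_k\psi=\cF^{-1}_{\tau,\xi}\,\frac{\sigma_k(\xi)}{c^2+P_2(|\xi|)-\tau}\,(\cF_{t,x}\square_k f)(\tau,\xi),
\]
and split $\square_k\psi=\square_k\psi_1+\square_k\psi_2+\square_k\psi_3$ according to the regions $\E_1,\E_2,\E_3$ of \eqref{maxfdec2}. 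On $\E_1$, expand the symbol in partial fractions in $\xi_1$ and insert the cutoff $\tilde\eta_{k_1}(ac^{1/2})$ together with its complement, producing the four terms $I,II,III,IV$ exactly as in the proof of the preceding Lemma.

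First I would dispose of the near-singular terms $II$ and $III$. After the change of variables $\xi_1=c^{1/2}a(c,\bar\xi,\tau)$ these reduce — modulo a Jacobian factor and a bounded convolution kernel in $x_1$ — to a frequency-localized free evolution $\square_k\cU_2(t)$ acting on a piece of $f$ and then integrated in the convolution variable $y_1$. Applying the free maximal function estimate \eqref{maxfest1h} in the direction $x_2$ (which is legitimate since it holds for every coordinate direction and $|k|\sgeq|k_1|\sgeq c\gg K(c)$), pulling the $y_1$-integration outside the norm by Minkowski's inequality, and bounding the resulting $L^2$-mass by the smoothing estimate \eqref{smoothest2} together with \eqref{danest}, one obtains $\sleq c\cdot c^{d/2}\,\la k_1\ra^{1/q-3/2}\,\|\square_k f\|_{L^{1,2}_{x_1;(x_j)_{j\neq1},t}}$. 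Since $|k_1|=|k|_\infty$ forces $\la k\ra\sim\la k_1\ra\sim\la k_i\ra$, this is the asserted bound \eqref{maxfest3h}; the power of $c$ in excess of the $c^{d/2}$ of the free estimate is precisely the one produced by \eqref{smoothest2}.

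It remains to handle the non-singular terms $I$, $IV$ and the contributions of $\square_k\psi_2,\square_k\psi_3$. On the supports of $I$ and $IV$ one has $|\xi_1-ac^{1/2}|\sgeq c^{1/2}\geq1$, and on $\E_2\cup\E_3$ the denominator $c^2+P_2(|\xi|)-\tau$ has no zero for $|\xi_1|\geq c^{1/2}$; in either case repeated integration by parts in $\xi_1$ turns the operator into convolution in $x_1$ against a kernel decaying rapidly in $x_1$, with operator norm $\sleq c^{1/2}|k_1|^{-3/2}$ (the extra $c^{1/2}$ coming from the Jacobian restores the same power of $c$ as above), and Young's and Minkowski's inequalities in $x_1$ then absorb $\square_k f$ into $\|\square_k f\|_{L^{1,2}_{x_1;(x_j)_{j\neq1},t}}$. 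For the remaining $x_2$-integration there is no decay to exploit, but $\widehat{\square_k\psi}$ is supported, in the $\xi_2$ variable, in an interval of length $\sim1$, so Bernstein's inequality bounds the $L^q_{x_2}$-norm by the $L^2_{x_2}$-norm at cost $\cO(1)$, after which a Cauchy--Schwarz in $\bar\xi$ closes the estimate; this contribution turns out to be smaller than the one from $II,III$. The one real difficulty is the bookkeeping — keeping track of the region decomposition, of the partial-fraction expansion, and of the uniformity in $c$ of the non-singular kernel bounds — but no idea beyond those already used for the preceding Lemma is needed.
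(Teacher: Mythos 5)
Your overall architecture matches the paper's: reduce to one pair of directions, decompose the dual variables according to the regions of \eqref{maxfdec2} (taken with respect to the variables complementary to $\xi_i$), expand the symbol in partial fractions in the singular direction $\xi_i$, and split into $I,II,III,IV$ with the cutoff $\tilde\eta_{k_i}(\cdot c^{1/2})$. Your treatment of $II$ and $III$ is also the paper's: after the change of variables $\xi_i=c^{1/2}a$ these become superpositions of free evolutions, the free maximal estimate \eqref{maxfest1h} is applied in the transverse direction $x_h$ (legitimate, as you say, since it holds for every coordinate direction), and the $L^2$ mass is controlled by \eqref{smoothest2} and \eqref{danest}, yielding $c^{1+d/2}\la k_i\ra^{-3/2+1/q}$.

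The gap is in your treatment of $IV$. First, a small point: your claim that $|\xi_i-ac^{1/2}|\sgeq c^{1/2}$ on the support of $I$ is false ($I$ carries the cutoff $\tilde\eta_{k_i}(ac^{1/2})$, so there $\xi_i$ and $ac^{1/2}$ are both within $O(1)$ of $k_i$); $I$ is harmless for a different reason, namely that its denominator $(2\tau_2+c)(|\xi|^2/c+\tau_2)$ has no zeros and is $\sgeq k_i^4/c$ there, which is how the paper bounds it. The serious problem is the claimed uniform kernel bound $c^{1/2}|k_i|^{-3/2}$ for $IV$ followed by Young in $x_i$ and Bernstein in $x_h$. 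In the transverse setting the $L^q$ of the output sits in $x_h$, where the kernel is a unit-cube Fourier multiplier and provides no decay; Bernstein converts $L^q_{x_h}$ to $L^2_{x_h}$ at cost $O(1)$ with no gain in $k$ and no $q$-dependence, so your final $\la k_i\ra$-power is whatever the multiplier norm gives, uniformly in $q$. But any such multiplier bound must pair an $L^2_\tau$ norm of the symbol against the $L^2_t$ content of $f$, and for the $IV$-symbol (which still contains the full, merely non-vanishing, $(c^2+P_2(|\xi|)-\tau)^{-1}$) the $\tau$-integration is the bottleneck: one only gets $\sim c\,|k_i|^{-1}$ (this is exactly \eqref{estmk3}--\eqref{estmk4} in the paper). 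Since $-3/2+1/q<-1$ for $q>2$, the bound $|k_i|^{-1}$ is strictly weaker than the target $\la k_i\ra^{-3/2+1/q}$, so your argument does not close for $q>2$. The paper's extra idea, which your sketch omits, is to view $IV$ as $\cM_k(f)$, the solution of an inhomogeneous equation, obtain the $q=\infty$ endpoint $c^{1+d/2}|k_i|^{-3/2}$ from the smoothing estimate \eqref{smoothestfin2} together with Bernstein (\eqref{estmk0}), and then interpolate with the $q=2$ multiplier bound $c|k_i|^{-1}$ to produce $\la k_i\ra^{-3/2+1/q}$ for all $q\geq2$ (\eqref{estmk5}). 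Without that interpolation (or some substitute supplying the $q$-dependent gain in $\la k_i\ra$), the lemma as stated is not proved.
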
 

\begin{proof}
It clearly suffices to consider the case $h=1$, $i=2$ and $k_2 \sgeq c$. 
The proof goes along the same line of that of \eqref{maxfest2h}, and we will 
only prove in detail the parts that are different. For convenience, we denote 
$\tilde\xi=(\xi_1,\xi_3,\ldots,\xi_d)$. We introduce the following regions 
\begin{align*}
\F_1 &= \{\tau-c^2 \leq -c^2/4 \}, \\
\F_2 &= \left\{-c^2/4 \leq \tau-c^2 \leq |\tilde\xi|^2 \left( -\frac{|\tilde\xi|^2}{c^2}+1 \right) \right\}, \\
\F_3 &= \left\{ \tau-c^2 \geq |\tilde\xi|^2 \left( -\frac{|\tilde\xi|^2}{c^2}+1 \right) \right\},
\end{align*}
and we make the following decomposition
\begin{align} \label{maxfdec3}
c^2+P_2(|\xi|)-\tau &= 
\begin{cases}
\left( \frac{|\xi|^2}{c} + \tau_2(c,\tau) \right) \left( -\frac{\xi_2}{c^{1/2}} + a \right) \left( \frac{\xi_2}{c^{1/2}} + a \right)  & (\tilde\xi,\tau) \in \F_1, \\
\left( \frac{|\xi|^2}{c} + \tau_2(c,\tau) \right) \left( -\frac{|\xi|^2}{c} + \tau_2(c,\tau)+c \right), & (\tilde\xi,\tau) \in \F_2, \\
- \left( \frac{|\xi|^2}{c} - \frac{c}{2} \right)^2 + \left(\frac{5}{4} c^2- \tau\right), & (\tilde\xi,\tau) \in \F_3, \\
\end{cases}
\end{align}
where $b=b(c,\tilde\xi,\tau):= ( \tau_2(c,\tau)- |\tilde\xi|^2/c + c)^{1/2}$, 
$\tau_2(c,\tau)= c \left( \sqrt{\frac{5}{4}-\frac{\tau}{c^2}} - \frac{1}{2} \right)$.
We denote 
\begin{align*}
\square_k \, \tilde\psi_i &= \cF^{-1}_{\tau,\xi} \frac{ \chi_{\F_i}(\tilde\xi,\tau) }{c^2+P_2(|\xi|)-\tau}(\cF_{t,x}\square_k \, f)(\tau,\xi), \; i=1,2,3.
\end{align*}
We estimate $\square_k\,\tilde\psi_1$, since by definition of the regions $\F_i$ the estimate of the other terms follow more easily, like in the last Lemma.

Set $\tilde\eta_{k_2}(\xi_2) = \sum_{|l| \leq 10} \eta_{k_2+l}(\xi_2)$. First we notice that 
\begin{align*}
\frac{ \chi_{\F_1}(\tilde\xi,\tau) }{c^2+P_2(|\xi|)-\tau} &= \frac{ \chi_{\F_1}(\tilde\xi,\tau) }{ (2\tau_2(c,\tau)+c) \left( \frac{|\xi|^2}{c} + \tau_2(c,\tau) \right) }  \\
&\; \; + \frac{ \chi_{\F_1}(\tilde\xi,\tau) }{ 2b(2\tau_2(c,\tau)+c) } \left( \frac{1}{ -\frac{\xi_1}{c^{1/2}} + b } + \frac{1}{ \frac{\xi_1}{c^{1/2}} + b } \right) \\
&=: \sum_{j=1}^3 B_j(c,\xi,\tau). 
\end{align*}
According to the above decomposition, we can rewrite $\square_k \, \psi_1$ as
\begin{align*}
& \square_k \, \tilde\psi_1 \\
&= \sum_{j=1}^3 \cF^{-1}_{\tau,\xi} \chi_{\F_1}(\tilde\xi,\tau) B_j(\xi,\tau) \tilde\eta_{k_2}(bc^{1/2}) (\cF_{t,x}\square_k \, f)(\tau,\xi) \\
&\; \; \; + \cF^{-1}_{\tau,\xi} \frac{ \chi_{\F_1}(\tilde\xi,\tau) (1-\tilde\eta_{k_2}(bc^{1/2})) }{c^2+P_2(|\xi|)-\tau}(\cF_{t,x}\square_k \, f)(\tau,\xi) \\
&=: I + II + III + IV.
\end{align*}

The estimates of $II$ and $III$ follow in the same way as for \eqref{maxfest2h} 
by exchanging the roles of $\xi_1$ and $\xi_2$. Now we estimate $I$: set 
\begin{align} \label{defm}
m(\xi,\tau) &= \frac{ \chi_{\E_1}(\tilde\xi,\tau)\tilde\sigma_{\bar k}(\tilde\xi) \tilde\eta_{k_2}(bc^{1/2} }{2b(2\tau_2(c,\tau)+c)},
\end{align}
and notice that $2\tau_2(c,\tau)+c \sgeq k_2^2$ in the support set of $m$; 
hence, for sufficiently large $c$, we have
\begin{align*}
m(\xi,\tau) &\sleq \frac{ \chi_{\F_1} \tilde\sigma_k(\xi) }{k_2^4},
\end{align*}
and therefore
\begin{align} \label{estm1}
\|m\|_{L^1_{\xi_2}L^2_{\xi_3,\ldots,\xi_d,\tau} L^{2q/(q-2)}_{\xi_1}} \sleq |k_2|^{-2}.
\end{align}
Now, since by Young's, H\"older's and Minkowski's inequalities we have 
\begin{align}
\|\cF^{-1}_{\xi,\tau}m(\xi,\tau)(\cF_{t,x}\square_k \, f)\|_{L^{q,\infty}_{x_1;\bar x,t}} &\sleq \|\cF^{-1}_{\xi,\tau}m(\xi,\tau)(\cF_{t,x}\square_k \, f)\|_{L^q_{x_1} L^1_{\tilde\xi,\tau}} \nonumber \\
&\sleq \|m(\xi,\tau)(\cF_{t,x}\square_k \, f)\|_{L^1_{\tilde\xi,\tau} L^{q'}_{\xi_1}} \nonumber \\
&\sleq \|m\|_{L^1_{\xi_2}L^2_{\xi_3,\ldots,\xi_d,\tau} L^{2q/(q-2)}_{\xi_1}} \|\cF_{t,x}\square_k \, f\|_{L^\infty_{\xi_2} L^2_{(\xi_j)_{j\neq2},\tau}} \nonumber \\
&\sleq \|m\|_{L^1_{\xi_2}L^2_{\xi_3,\ldots,\xi_d,\tau} L^{2q/(q-2)}_{\xi_1}} \|\square_k \, f\|_{L^1_{x_2} L^2_{(x_j)_{j\neq2},t}}, \label{estm2}
\end{align}
we can deduce that
\begin{align} \label{estIf}
\|I\|_{L^{q,\infty}_{x_1;\bar x,t}} &\sleq |k_2|^{-2} \|\square_k \, f\|_{ L^1_{x_1} L^2_{(x_j)_{j \neq 2},t} }.
\end{align}

Now we estimate $IV$: set 
\begin{align} \label{mkBkdef}
m_k(\xi,\tau): = \frac{ \chi_{\F_3}(\tilde\xi,\tau) \tilde\sigma_k(\xi) (1-\tilde\eta_{k_2}(b)) }{ c^2+P_2(|\xi|)-\tau }, \; &\; \cM_k(f) := \cF^{-1}_{\tau,\xi}m_k(\xi,\tau)(\cF_{t,x}f), \\
\end{align}
and notice that $\cM_k(f)$ is the solution of the inhomogeneous equation
\begin{align*}
-i\psi_t &= A_{c,2}\psi - \cF^{-1}_{\tau,\xi}m_k(\xi,\tau)(c^2+P_2(|\xi|)-\tau)(\cF_{t,x}f).
\end{align*}
Applying \eqref{smoothestfin2} (recall that $k_2 \sgeq c$), we have 
\begin{align}
\|\cM_k(f)\|_{L^\infty_{x,t}} &\sleq c^{3/2} \|\cM_k(f)\|_{L^\infty_t L^2_x} \nonumber \\
&\sleq c^{d/2} c |k_2|^{-3/2} \|f\|_{ L^1_{x_2} L^2_{(x_j)_{j \neq 2},t} } \nonumber \\
&= c^{\frac{d}{2} +1} |k_2|^{-3/2} \|f\|_{ L^1_{x_2} L^2_{(x_j)_{j \neq 2},t} }. \label{estmk0}
\end{align}
Next, for $(\xi,\tau) \in \text{supp}(m_k)$,
\begin{align} \label{estmk1}
|c^2+P_2(|\xi|)-\tau| &\stackrel{(\tilde\xi,\tau) \in \F_3}{\sgeq} c^{-1} \la k \ra^2 |k_2|.
\end{align}
By the definition of $b$ we have that for $\frac{|\tilde\xi|}{c} \sleq \frac{1}{2} (c+\tau_2(c,\tau))$
\begin{align} \label{estdiff1}
|c^2+P_2(|\xi|)-\tau| &\sgeq ( c+\tau_2(c,\tau) )^{3/2}, \; (\xi,\tau) \in \text{supp}(m_k), 
\end{align}
while for $\frac{|\tilde\xi|}{c} \sgeq \frac{1}{2} (c+\tau_2(c,\tau))$ 
we can exploit the fact that $|k|_\infty=|k_2| \sgeq c$ to obtain again that
\begin{align} \label{estdiff2}
|c^2+P_2(|\xi|)-\tau| &\sgeq ( c+\tau_2(c,\tau) )^{3/2}, \; (\xi,\tau) \in \text{supp}(m_k), 
\end{align}
and by combining \eqref{mkBkdef} with \eqref{estdiff1}-\eqref{estdiff2} we obtain
\begin{align} \label{estmk2}
m_k(\xi,\tau) \sleq c \frac{\chi_{\tau \geq \frac{3}{4}c^2}\tilde\sigma_k(\xi) }{(|k_2|^{2}+c+\tau_2(c,\tau))^{3/2} },
\end{align}
which gives
\begin{align} \label{estmk3}
\|m_k\|_{L^1_{\xi_2}L^2_{\xi_3,\ldots,\xi_d,\tau}L^\infty_{\xi_1}} &\sleq c |k_2|^{-1}.
\end{align}
Therefore, from\eqref{mkBkdef} and \eqref{estm2} we can deduce
\begin{align} \label{estmk4}
\|\cM_k(f)\|_{L^{2,\infty}_{x_1;\bar x,t}} &\sleq c |k_2|^{-1} \|f\|_{L^1_{x_2}L^2_{(x_j)_{j \neq 2},t}}.
\end{align}
For any $q \geq 2$ we obtain by interpolation between \eqref{estmk0} and 
\eqref{estmk4} 
\begin{align} \label{estmk5}
\|\cM_k(f)\|_{L^{q,\infty}_{x_1;\bar x,t}} &\sleq c^{1+d\left( \frac{1}{2}-\frac{1}{q} \right)} |k_2|^{-3/2+1/q} \|f\|_{L^1_{x_2}L^2_{(x_j)_{j \neq 2},t}},
\end{align}
and replacing $f$ by $\square_k \, f$ in \eqref{estmk5}, we finally obtain
\begin{align*}
\|IV\|_{L^{q,\infty}_{x_1;\bar x,t}} &\sleq c^{1+d\left( \frac{1}{2}-\frac{1}{q} \right)} |k_2|^{-3/2+1/q} \|f\|_{L^1_{x_2}L^2_{(x_j)_{j \neq 2},t}}.
\end{align*}

\end{proof}

If we collect \eqref{maxfest2h} and \eqref{maxfest3h}, we can deduce 

\begin{lemma}
Let $q \geq 2$, $\frac{8}{d} < q \leq + \infty$, $k \in \Z^d$ 
with $|k_i| \sgeq c$ and $h,i \in \{1,\ldots,d\}$, then 
\begin{align} \label{maxfest4h}
\| \square_k \, \d^2_{x_i}\cA_2 f \|_{L^{q,\infty}_{x_h;(x_j)_{j \neq h},t}} &\sleq c^{1+d/2} \la k_i \ra^{1/2+1/q} \|\square_kf\|_{L^{1,2}_{x_i;(x_j)_{j \neq i},t}}, \; 0 < |t| \sleq c^2.
\end{align}
\end{lemma}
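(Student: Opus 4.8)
The plan is to derive \eqref{maxfest4h} directly from the two maximal function estimates \eqref{maxfest2h} and \eqref{maxfest3h} already established, by transferring the two derivatives $\d^2_{x_i}$ onto the frequency-localized profile at the cost of a Bernstein-type factor $\la k_i \ra^2$. Concretely, since the $\d^\sigma_{x_i}$-version of \eqref{danest} holds for all $k\in\Z^d$, applying it with $\sigma=2$, $\psi=\cA_2 f$ and distinguished variable $x_h$ gives
\begin{align*}
\| \square_k \, \d^2_{x_i}\cA_2 f \|_{L^{q,\infty}_{x_h;(x_j)_{j \neq h},t}} &\sleq \la k_i \ra^{2}\, \| \square_k \, \cA_2 f \|_{L^{q,\infty}_{x_h;(x_j)_{j \neq h},t}},
\end{align*}
so that it only remains to estimate $\| \square_k \, \cA_2 f \|_{L^{q,\infty}_{x_h;(x_j)_{j \neq h},t}}$.

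For this I would split into the two cases $h=i$ and $h\neq i$. If $h=i$, note that the hypothesis $|k_i|\sgeq c$ is stronger than the hypothesis $|k_i|\sgeq K(c)^2$ of \eqref{maxfest2h} (recall $K(c)=\cO(\log c)$, while the threshold $c$ grows linearly), so \eqref{maxfest2h} applies and yields $\| \square_k \, \cA_2 f \|_{L^{q,\infty}_{x_i;(x_j)_{j\neq i},t}}\sleq c^{d/2}\la k_i\ra^{-3/2+1/q}\|\square_k f\|_{L^{1,2}_{x_i;(x_j)_{j\neq i},t}}$. If $h\neq i$, the hypothesis $|k_i|\sgeq c$ is exactly that of \eqref{maxfest3h}, which gives $\| \square_k \, \cA_2 f \|_{L^{q,\infty}_{x_h;(x_j)_{j\neq h},t}}\sleq c^{1+d/2}\la k_i\ra^{-3/2+1/q}\|\square_k f\|_{L^{1,2}_{x_i;(x_j)_{j\neq i},t}}$. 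Plugging either bound into the displayed inequality and using $\la k_i\ra^{2}\la k_i\ra^{-3/2+1/q}=\la k_i\ra^{1/2+1/q}$ together with $c\geq1$ (so that $c^{d/2}\leq c^{1+d/2}$) produces \eqref{maxfest4h} in both cases.

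There is essentially no obstacle here: all of the analytic content is already contained in the Lemmas behind \eqref{maxfest2h}, \eqref{maxfest3h} and \eqref{danest}. The only point worth a word of care is the bookkeeping of the frequency thresholds, namely verifying that the hypothesis $|k_i|\sgeq c$ of the present Lemma implies, for the (large) values of $c$ under consideration, the stronger-looking condition $|k_i|\sgeq K(c)^2$ of \eqref{maxfest2h}; this is immediate since $K(c)$ grows only logarithmically in $c$ whereas the threshold $c$ grows linearly.
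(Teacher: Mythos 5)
Your argument is correct and is exactly the route the paper takes: the text merely states that \eqref{maxfest4h} follows by ``collecting'' \eqref{maxfest2h} and \eqref{maxfest3h}, and the missing ingredient you supply --- pulling the two $x_i$-derivatives out of the frequency-localized profile via the $\d^\sigma_{x_i}$ form of \eqref{danest} at the cost of $\la k_i\ra^2$, then splitting into $h=i$ and $h\neq i$ --- is precisely how the paper uses \eqref{danest} in analogous steps elsewhere. Your check that $|k_i|\sgeq c$ implies the threshold $|k_i|\sgeq K(c)^2$ of \eqref{maxfest2h} (since $K(c)=\cO(\log c)$) is the right bookkeeping point and is handled correctly.
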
 

\begin{remark}
In the general case $r>2$ we have
\begin{enumerate}
\item Let $q \geq 2$, $\frac{4r}{d} < q \leq + \infty$ and $k \in \Z^d$ 
with $|k| \sgeq K(c)$, then 
\begin{align} \label{maxfest1hr}
\| \square_k \, \cU_r(t)\psi_0 \|_{L^{q,\infty}_{x_i;(x_j)_{j \neq i},t}} &\sleq c^{d\left(1-\frac{1}{r}\right)} \la k \ra^{1/q} \|\square_k\psi_0\|_{L^2}, \; 0 < |t| \sleq c^{2(r-1)}, \; \forall i=1,\ldots,d.
\end{align}

\item Let $q \geq 2$, $\frac{4}{d} < q \leq + \infty$ and $k \in \Z^d$ 
with $|k| \sleq K(c)$, then 
\begin{align} \label{maxfest1lr}
\| \square_k \, \cU_r(t)\psi_0 \|_{L^{q,\infty}_{x_i;(x_j)_{j \neq i},t}} &\sleq c \la k \ra^{1/q} \|\square_k\psi_0\|_{L^2}, \; \forall i=1,\ldots,d.
\end{align}

\item Let $q \geq 2$, $\frac{4r}{d} < q \leq + \infty$ and $k \in \Z^d$ 
with $|k_i| \sgeq K(c)^2$ and $i \in \{1,\ldots,d\}$, then 
\begin{align} \label{maxfest2hr}
\| \square_k \, \cA_r f \|_{L^{q,\infty}_{x_i;(x_j)_{j \neq i},t}} &\sleq c^{r-1+d\left(1-\frac{1}{r}\right)} \la k_i \ra^{-r+1/2+1/q} \|\square_kf\|_{L^{1,2}_{x_i;(x_j)_{j \neq i},t}}, \; 0 < |t| \sleq c^{2(r-1)}.
\end{align}

\item Let $q \geq 2$, $\frac{4r}{d} < q \leq + \infty$, $k \in \Z^d$ 
with $|k_i| \sgeq c$ and $h,i \in \{1,\ldots,d\}$ with $h \neq i$, then 
\begin{align} \label{maxfest3hr}
\| \square_k \, \cA_r f \|_{L^{q,\infty}_{x_h;(x_j)_{j \neq h},t}} &\sleq c^{r-1+d\left(1-\frac{1}{r}\right)} \la k_i \ra^{-r+1/2+1/q} \|\square_kf\|_{L^{1,2}_{x_i;(x_j)_{j \neq i},t}}, \; 0 < |t| \sleq c^{2(r-1)}.
\end{align}

\item Let $q \geq 2$, $\frac{4r}{d} < q \leq + \infty$, $k \in \Z^d$ 
with $|k_i| \sgeq c$ and $h,i \in \{1,\ldots,d\}$, then
\begin{align} \label{maxfest4hr}
\| \square_k \, \d^{2(r-1)}_{x_i}\cA_r f \|_{L^{q,\infty}_{x_h;(x_j)_{j \neq h},t}} &\sleq c^{r-1+d\left(1-\frac{1}{r}\right)} \la k_i \ra^{r-3/2+1/q} \|\square_kf\|_{L^{1,2}_{x_i;(x_j)_{j \neq i},t}}, \; 0 < |t| \sleq c^{2(r-1)}.
\end{align}

\end{enumerate}
\end{remark}

\subsubsection{Proof of the local well-posedness} \label{LWPproofsubsec}

In this subsection we use smoothing estimates, Strichartz estimates and 
maximal funtion estimates in order to prove Proposition \ref{LWPhighordschr}. 
In order to do so, it seems necessary to estimate norms in which partial 
derivatives and anisotropic Lebesgue spaces have different directions, 
for example $\|\d^2_{x_1} \square_k \, \cA f\|_{L^{2,\infty}_{x_2;(x_j)_{j \neq 2},t}}$ with 
$|k|_\infty = |k_3|$. As usual, we show results for the case $r=2$, and then 
we point out the modifications for the case $r>2$.

\begin{lemma}
Let $i,l,m \in \{1,\ldots,d\}$, $1 \leq p,q, \leq +\infty$. 
Assume that $k=(k_1,\ldots,k_d)$ with $|k|_\infty = |k_m| \sgeq c$, then
\begin{align} \label{estparder}
\| \square_k \, \d^2_{x_l}f\|_{L^{p,q}_{x_i;(x_j)_{j \neq i},t}} &\sleq \| \square_k \, \d^2_{x_m}f\|_{L^{p,q}_{x_i;(x_j)_{j \neq i},t}}.
\end{align}
\end{lemma}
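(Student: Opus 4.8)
The plan is to reduce \eqref{estparder} to a single Fourier multiplier estimate, in the spirit of \eqref{danest} (cf. the proof of Lemma 3.4 in \cite{wang2009global}). One may assume $l\neq m$ (the case $l=m$ being trivial) and that the right-hand side of \eqref{estparder} is finite. The heuristic is that $\square_k=\cF^{-1}\sigma_k\cF$ localizes the $x$-frequency to a cube of side $O(1)$ centered at $k$, and since $|k|_\infty=|k_m|\sgeq c$, on $\text{supp}(\sigma_k)$ one has $|\xi_m|\sgeq c$ — so $\xi_m$ is bounded away from $0$ once $c$ is large — while $|\xi_l|\sleq|k_l|+O(1)\leq|k_m|+O(1)\sleq|\xi_m|$; hence $\xi_l^2/\xi_m^2=O(1)$ there, together with all its derivatives. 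So first I would introduce, with $\tilde\sigma_k$ as in \eqref{decop} (recall $\tilde\sigma_k\sigma_k=\sigma_k$ by \eqref{decoprop}),
\begin{align*}
\mu_k(\xi):=\frac{\xi_l^2}{\xi_m^2}\,\tilde\sigma_k(\xi),
\end{align*}
a smooth function supported in an $O(1)$-cube around $k$, and use $\mu_k(\xi)\cdot(-\xi_m^2\sigma_k(\xi))=-\xi_l^2\sigma_k(\xi)$ together with $\widehat{\square_k\d^2_{x_l}f}=-\xi_l^2\sigma_k\,\cF f$ and $\widehat{\square_k\d^2_{x_m}f}=-\xi_m^2\sigma_k\,\cF f$ to write $\square_k\,\d^2_{x_l}f$ as (a dimensional constant times) the convolution in the $x$ variables of $\cF^{-1}_x\mu_k$ with $\square_k\,\d^2_{x_m}f$.

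The key — and essentially the only nontrivial — step is then to prove that $\|\cF^{-1}_x\mu_k\|_{L^1(\R^d)}\sleq1$ uniformly in $k$ with $|k|_\infty=|k_m|\sgeq c$ and in $c\geq c_0$ for a suitable absolute $c_0$. Writing $\mu_k(\xi)=g_k(\xi-k)$, a translation in frequency is a modulation in $x$ and so preserves the $L^1$ norm, which reduces the problem to bounding $\|\cF^{-1}_x g_k\|_{L^1(\R^d)}$ with $g_k$ of fixed compact support; since the support is fixed, this follows once one controls finitely many (say $\leq d+1$) $\xi$-derivatives of $g_k$ in $L^\infty$, uniformly. The main (minor) obstacle is precisely checking that these bounds are uniform in $k$ and $c$: this is where the hypothesis $|k|_\infty=|k_m|\sgeq c$ enters, since each $\d_{\xi_m}$ hitting $\xi_m^{-2}$ gains a factor $|\xi_m|^{-1}\sleq c^{-1}\sleq1$, each $\d_{\xi_l}$ hitting $\xi_l^2$ produces a factor $|\xi_l|\sleq|\xi_m|$ absorbed by the denominator, and derivatives of $\tilde\sigma_k$ are $O(1)$; one concludes $\|\d^\alpha_\xi g_k\|_{L^\infty}\sleq1$ for $|\alpha|\leq d+1$, hence $\|\cF^{-1}_x g_k\|_{L^1}\sleq1$.

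Finally, convolution in the space variables with an $L^1(\R^d)$ kernel is bounded on the anisotropic Lebesgue space $L^{p,q}_{x_i;(x_j)_{j\neq i},t}$, with operator norm at most the $L^1$ norm of the kernel — this follows from Young's inequality and Minkowski's integral inequality, with $t$ treated as a parameter. Combining this with the two previous steps yields
\begin{align*}
\|\square_k\,\d^2_{x_l}f\|_{L^{p,q}_{x_i;(x_j)_{j\neq i},t}}\sleq\|\cF^{-1}_x\mu_k\|_{L^1(\R^d)}\,\|\square_k\,\d^2_{x_m}f\|_{L^{p,q}_{x_i;(x_j)_{j\neq i},t}}\sleq\|\square_k\,\d^2_{x_m}f\|_{L^{p,q}_{x_i;(x_j)_{j\neq i},t}},
\end{align*}
which is \eqref{estparder}. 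The analogue needed later for $r>2$ goes through verbatim, with $\d^{2(r-1)}$ in place of $\d^2$ and $(\xi_l/\xi_m)^{2(r-1)}\tilde\sigma_k$ in place of $\mu_k$.
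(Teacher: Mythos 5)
Your argument is correct and is essentially the paper's own proof: both reduce \eqref{estparder} to writing $\square_k\,\d^2_{x_l}f$ as the spatial convolution of $\square_k\,\d^2_{x_m}f$ with the kernel of the multiplier $(\xi_l/\xi_m)^2$ localized near $k$, bound that kernel in $L^1$ uniformly in $k$ and $c$ using $|\xi_l|\sleq|\xi_m|$ and $|\xi_m|\sgeq c$ on the support, and conclude by Young's and Minkowski's inequalities in the anisotropic norms. The only (immaterial) difference is that the paper takes the multiplier to be two-dimensional in the variables $(\xi_l,\xi_m)$, using the one-dimensional cutoffs $\eta_{k_l+h_l}(\xi_l)\eta_{k_m+h_m}(\xi_m)$, while you use the full $d$-dimensional $\tilde\sigma_k$; your write-up in fact supplies the uniform derivative bounds that the paper leaves implicit.
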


\begin{proof}
\begin{align*}
\| \square_k \, \d^2_{x_l}f\|_{L^{p,q}_{x_i;(x_j)_{j \neq i},t}} &\sleq \sum_{|h_l|_\infty,|h_m|_\infty \leq 1} \left\| \cF^{-1}_{\xi_l,\xi_m} \left( \left( \frac{\xi_l}{\xi_m} \right)^2 \eta_{k_l+h_l}(\xi_l)\eta_{k_m+l_m}(\xi_m) \right) \right\|_{L^1(\R^2)} \\
&\times \| \square_k \, \d^2_{x_m}f\|_{L^{p,q}_{x_i;(x_j)_{j \neq i},t}} \\
&\sleq \| \square_k \, \d^2_{x_m}f\|_{L^{p,q}_{x_i;(x_j)_{j \neq i},t}}.
\end{align*}
\end{proof}

\begin{lemma}
\begin{enumerate}
\item Let $(a,b)$ be order-$2$ admissible, $i \in \{1,\ldots,d\}$, $q \geq 2$, 
$\frac{8}{d} < q <+\infty$ and $k \in \Z^d$ with $|k|_\infty \sgeq K(c)$, then
\begin{align} \label{strest2}
\| \square_k \; \d^\alpha_{x_i} \cA_2 f \|_{L^{q,\infty}_{x_i;(x_j)_{j \neq i},t}} \sleq c^{\frac{d}{2} + \frac{2}{a}}  \la |k|_\infty \ra^{\alpha+1/q} \|\square_k \, f\|_{L^{a'}_t L^{b'}_x}, \; \; 0 < |t| \sleq c^2.
\end{align}

\item Let $(a,b)$ be Schr\"odinger admissible, $i \in \{1,\ldots,d\}$, then
\begin{align}
\| \square_k \d^2_{x_i}\cA_2f \|_{L^a_t L^b_x} &\sleq c^{1+4/p} \la |k|_\infty \ra^{1/2} \|\square_k f \|_{L^{1,2}_{x_i;(x_j)_{j \neq i},t}}, \; 0 < |t| \sleq c^2 \label{auxstr1} \\
\| \square_k \d^2_{x_i}\cA_2f \|_{L^{\infty,2}_{x_i;(x_j)_{j \neq i},t}} &\sleq c^{1+4/p} \la |k|_\infty \ra^{1/2} \|\square_k f \|_{L^{a'}_t L^{b'}_x}, \; 0 < |t| \sleq c^2 \label{auxstr2}.
\end{align}
\end{enumerate}
\end{lemma}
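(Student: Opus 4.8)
The plan is to derive all three estimates \eqref{strest2}, \eqref{auxstr1}, \eqref{auxstr2} from one common scheme. In each case I would first use the Christ--Kiselev lemma to replace the retarded operator $\cA_2 f=\int_0^t\cU_2(t-\tau)f(\tau)\di\tau$ by the non-retarded composition $\cU_2(t)\circ\bigl[\int_0^T\cU_2(-\tau)\cdot\,\di\tau\bigr]$, and then estimate the two factors separately, each by (the dual of) one of the estimates already established: the homogeneous Strichartz estimate \eqref{strhighschrdec}, the maximal function estimate \eqref{maxfest1h}, or the local smoothing estimate \eqref{smoothest1}. The $x_i$-derivatives would be distributed between the two factors using the multiplier lemma \eqref{danest}, together with the rotation lemma \eqref{estparder} when the derivative direction has to be brought onto the frequency-maximal coordinate (as it must for the smoothing estimates, which are stated only for that coordinate).

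For \eqref{strest2}: since all operators in sight are Fourier multipliers, $\square_k\d^\alpha_{x_i}\cA_2 f=\d^\alpha_{x_i}\bigl(\square_k\cA_2 f\bigr)$, and \eqref{danest} gives $\|\d^\alpha_{x_i}\square_k\cA_2 f\|_{L^{q,\infty}_{x_i;(x_j)_{j\neq i},t}}\sleq\la k_i\ra^\alpha\|\square_k\cA_2 f\|_{L^{q,\infty}_{x_i;(x_j)_{j\neq i},t}}\sleq\la|k|_\infty\ra^\alpha\|\square_k\cA_2 f\|_{L^{q,\infty}_{x_i;(x_j)_{j\neq i},t}}$, so it suffices to treat the case $\alpha=0$. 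Now apply Christ--Kiselev to $\square_k\cA_2 f$ --- legitimate because the time exponent of the target $L^{q,\infty}_{x_i;(x_j)_{j\neq i},t}$ is $\infty$, strictly larger than $a'\le2$; this is exactly where $q<\infty$ and the admissibility of $(a,b)$ enter --- to reduce to $\square_k\cU_2(t)\bigl[\int_0^T\cU_2(-\tau)f(\tau)\di\tau\bigr]$. Then $\|\square_k\cU_2(t)g\|_{L^{q,\infty}_{x_i;(x_j)_{j\neq i},t}}\sleq c^{d/2}\la k\ra^{1/q}\|\square_k g\|_{L^2}$ by \eqref{maxfest1h}, while $\bigl\|\int_0^T\cU_2(-\tau)\square_k f(\tau)\di\tau\bigr\|_{L^2}\sleq c^{2/a}\|\square_k f\|_{L^{a'}_t L^{b'}_x}$ is the dual of \eqref{strhighschrdec} with $r=2$ and exponent pair $(a,b)$. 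Composing and restoring the factor $\la|k|_\infty\ra^\alpha$ produces the stated $c^{\,d/2+2/a}\la|k|_\infty\ra^{\alpha+1/q}$.

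For \eqref{auxstr1} and \eqref{auxstr2} I would run the same template, now factoring $\d^2_{x_i}=-D^{3/2}_{x_i}D^{1/2}_{x_i}$ on the Fourier support of $\square_k$ (after, if necessary, a rotation via \eqref{estparder} so that $i$ is the frequency-maximal direction, which is licit since $|k|_\infty\sgeq c\gg1$), so that $D^{3/2}_{x_i}$ is precisely the derivative absorbed by the local smoothing estimate \eqref{smoothest1}, while $D^{1/2}_{x_i}$ only costs $\la|k|_\infty\ra^{1/2}$ through \eqref{danest}. For \eqref{auxstr1}: after the Christ--Kiselev reduction I would bound $\cU_2(t)\colon L^2\to L^a_t L^b_x$ by \eqref{strhighschrdec} and the $L^2$-factor $\int_0^T\cU_2(-\tau)\square_k D^{3/2}_{x_i}\bigl(D^{1/2}_{x_i}f\bigr)(\tau)\di\tau$ by the dual of \eqref{smoothest1}, picking up $c\,\la|k|_\infty\ra^{1/2}\|\square_k f\|_{L^{1,2}_{x_i;(x_j)_{j\neq i},t}}$. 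Estimate \eqref{auxstr2} is the transpose statement: I would bound $\square_k D^{3/2}_{x_i}\cU_2(t)\colon L^2\to L^{\infty,2}_{x_i;(x_j)_{j\neq i},t}$ directly by \eqref{smoothest1} and the $L^2$-factor $D^{1/2}_{x_i}\int_0^T\cU_2(-\tau)\square_k f(\tau)\di\tau$ by the dual of \eqref{strhighschrdec}. The precise power $c^{1+4/p}$ in the statement is cleanest to obtain by skipping the Christ--Kiselev splitting altogether and invoking the retarded Strichartz estimate \eqref{retstrhighschrdec} together with \eqref{smoothest1}; it is not optimal, but it is all that is needed in the proof of Theorem \ref{NLKGtoNLSradthm}.

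The hard part will be the bookkeeping: reconciling the anisotropic Lebesgue norms $L^{p_1,p_2}_{x_i;(x_j)_{j\neq i},t}$ --- in particular keeping track of which variable carries the $L^1$, resp.\ the $L^\infty$, exponent --- with the isotropic space-time norms $L^a_t L^b_x$ of the Strichartz estimates, and checking that the accumulated powers of $c$ and of $\la|k|_\infty\ra$ collapse exactly to $c^{\,d/2+2/a}\la|k|_\infty\ra^{\alpha+1/q}$, resp.\ $c^{1+4/p}\la|k|_\infty\ra^{1/2}$. A secondary nuisance is that Christ--Kiselev degenerates at the endpoints (for instance $a=2$ in \eqref{auxstr2}, where the inner time exponent $2$ no longer strictly dominates $a'=2$); such borderline cases I would simply exclude, or treat directly via \eqref{smoothestfin1}. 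Finally, for $r>2$ the argument goes through verbatim, with \eqref{smoothest1} and \eqref{maxfest1h} replaced by their order-$r$ counterparts \eqref{smoothestr1} and \eqref{maxfest1hr}.
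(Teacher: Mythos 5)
Your proposal is correct and matches the paper's argument in substance: the paper packages the same composition of dual estimates as a bound on the bilinear form $\cL_k(f,\psi)=\int(\square_k\cA_2f,\psi(t))\,\di t$, estimated once via the maximal function bound \eqref{maxfest1h} and once via the Strichartz bound \eqref{retstrhighschrdec} (resp.\ via \eqref{smoothestfin2}, which the paper has already assembled from \eqref{smoothest1}, \eqref{smoothest2} and \eqref{danest} exactly as you re-derive it), rather than invoking Christ--Kiselev by name. The endpoint caveat you flag is present in the paper as well: the duality argument is stated to yield \eqref{strest2} only for $q>2$ or $a>2$, with the case $a=q=2$ deduced directly from \eqref{maxfest1h}, just as you propose.
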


\begin{proof}
Denote
\begin{align*}
\cL_k(f,\psi) &= \int \left( \square_k \, \int \cU_2(t-\tau)f(\tau)\di\tau, \psi(t) \right) \di t.
\end{align*}
By dualiy and the maximal function estimate \eqref{maxfest1h}
\begin{align*}
|\cL_k(f,\psi)| &\leq \| \square_k \, f\|_{L^{q'}_{x_1} L^1_{\bar x,t}} \sum_{|l|_\infty \leq 1} \left\| \square_{k+l} \int \cU_2(t-\tau)\psi(t) \di t \right\|_{L^q_{x_1} L^\infty_{\bar x,t}} \\
&\leq \| \square_k \, f\|_{L^{q'}_{x_1} L^1_{\bar x,t}} \sum_{|l|_\infty \leq 1} \int \| \square_{k+l} \cU_2(t-\tau)\psi(t) \di t \|_{L^q_{x_1} L^\infty_{\bar x,t}} \\
&\stackrel{\eqref{maxfest1h}}{\leq}  c^{d/2} \la k \ra^{1/q} \| \square_k \, f\|_{L^{q'}_{x_1} L^1_{\bar x,t}} \|\psi\|_{L^1_t L^2_x},
\end{align*}
so by duality we obtain
\begin{align} \label{auxest}
\left\| \square_k \, \int \cU_2(t-\tau)f(\tau)\di\tau \right\|_{L^\infty_tL^2_x} &\sleq c^{d/2} \la k \ra^{1/q} \| \square_k \, f\|_{L^{q'}_{x_1} L^1_{\bar x,t}}.
\end{align}
Therefore, by duality, Strichartz estimates \eqref{retstrhighschrdec} and 
\eqref{auxest}
\begin{align}
|\cL_k(f,\psi)| &\leq \left\| \square_k \, \int \cU_2(-\tau)f(\tau)\di\tau \right\|_{L^2_x} \left\| \square_k \, \int \cU_2(-t)\psi(t)\di t \right\|_{L^2_x} \nonumber \\
&\sleq c^{d/2} \la k \ra^{1/q}  \|f\|_{L^{q'}_{x_1} L^1_{\bar x,t}} \; c^{(1-1/r)2r/a} \|\square_k \, \psi\|_{L^{a'}_t L^{b'}_x},
\end{align}
which implies \eqref{strest2} for $q>2$ or $a>2$. 
In the case $a=q=2$, \eqref{strest2} can be directly deduced from \eqref{maxfest1h}.
Furthermore, by \eqref{retstrestfrdec}, \eqref{danest} and \eqref{smoothestfin2} we get
\begin{align}
\cL_k(\d^2_{x_i}f,\psi) &\sleq c^{1+4/p} \la|k|_\infty\ra^{1/2} \|\square_k f\|_{ L^{1,2}_{x_i;(x_j)_{j \neq i},t} } \; c^{4/p} \|\psi\|_{L^{a'}_t L^{b'}_x},
\end{align}
and we can deduce \eqref{auxstr1}; by exchanging $f$ and $\psi$, we get \eqref{auxstr2}.
\end{proof}

We now summarize the results we will use in order to prove the local 
well-posedness of \eqref{highordschr}: we omit the proof, it follows 
from the results of the previous subsections, together with \eqref{estparder}.

\begin{proposition} \label{auxprop}
Let $d \geq 2$, $8/d \leq p < + \infty$, $2 \leq q <+\infty$, $q > 8/d$, 
$k \in \Z^d$ with $|k|_\infty = |k_i| \sgeq c$, $h,i,l \in \{1,\ldots,d\}$. Then
\begin{align}
\left\| \square_k D_{x_i}^{3/2}\cU_2(t)\psi_0 \right\|_{ L^{\infty,2}_{x_i;(x_j)_{j\neq i},t} } &\sleq c \|\square_k \psi_0\|_{L^2}, \label{smoothest1def} \\
\| \square_k \, \cU_2(t)\psi_0 \|_{L^{q,\infty}_{x_i;(x_j)_{j \neq i},t}} &\sleq c^{d/2} \la k \ra^{1/q} \|\square_k\psi_0\|_{L^2}, \; 0 < |t| \sleq c^2, \label{maxfest1hdef} \\
\| \square_k \cU_2(t)\phi_0 \|_{L^\infty_t L^2_x \cap L^{2+p}_x} &\sleq c^{\frac{4}{p(p+2)}} \|\square_k\phi_0\|_{L^2}, \; 0 < |t| \sleq c^2 \label{strhighschrdecdef} \\
\| \square_k \d^2_{x_l} \cA_2 f \|_{ L^{\infty,2}_{x_i;(x_j)_{j\neq i},t} } &\sleq \|\square_k \, f\|_{ L^{1,2}_{x_i;(x_j)_{j\neq i},t} }, \label{smoothestfin1def} \\
\| \square_k \, \d^2_{x_l}\cA_2 f \|_{L^{q,\infty}_{x_h;(x_j)_{j \neq h},t}} &\sleq c^{1+d/2} \la k_i \ra^{1/2+1/q} \|\square_kf\|_{L^{1,2}_{x_i;(x_j)_{j \neq i},t}}, \; 0 < |t| \sleq c^2, \label{maxfest4hdef} \\
\| \square_k \d^2_{x_l}\cA_2f \|_{L^\infty_t L^2_x \cap L^{2+p}_{t,x}} &\sleq c^{1+\frac{4}{p(p+2)}} \la k_i \ra^{1/2} \|\square_k f \|_{L^{1,2}_{x_i;(x_j)_{j \neq i},t}}, \; 0 < |t| \sleq c^2 \label{auxstr1def} \\
\| \square_k \d^2_{x_l}\cA_2f \|_{L^{\infty,2}_{x_i;(x_j)_{j \neq i},t}} &\sleq c^{1+\frac{4}{p(p+2)}} \la k_i \ra^{1/2} \|\square_k f \|_{ L^{(2+p)/(1+p)}_{t,x} }, \; 0 < |t| \sleq c^2 \label{auxstr2def}, \\
\| \square_k \; \d^2_{x_l} \cA_2 f \|_{L^{q,\infty}_{x_i;(x_j)_{j \neq i},t}} &\sleq c^{\frac{d}{2} + \frac{2}{p+2} + \frac{4}{p(p+2)} }  \la k_i \ra^{2+1/q} \|\square_k \, f\|_{ L^{(2+p)/(1+p)}_{t,x} }, \; \; 0 < |t| \sleq c^2, \label{strest2def} \\
\| \square_k \; \cA_2 f \|_{L^\infty_tL^2_x \cap L^{2+p}_{t,x}} &\sleq c^{\frac{8}{p(p+2)}} \|\square_k \, f\|_{L^{(2+p)/(1+p)}_{t,x}}. \label{auxstr3def}
\end{align}
\end{proposition}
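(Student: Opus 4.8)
The plan is to recognize that Proposition \ref{auxprop} is a pure repackaging of the estimates assembled in Sections \ref{smoothsubsec}--\ref{maxfsubsec}: every displayed inequality is one of those estimates, possibly after (i) invoking \eqref{estparder} to replace the differentiated direction $x_l$ by the direction $x_i$ (or $x_h$) distinguished by the hypothesis $|k|_\infty=|k_i|$ --- this is legitimate precisely because $|k_i|\gtrsim c\geq 1$; (ii) passing between $D^\sigma_{x_i}$ and $\partial^\sigma_{x_i}$ by \eqref{danest}; and (iii) interpolating the relevant $L^2$-based bound against an admissible Strichartz estimate in order to reach the mixed norms $L^\infty_tL^2_x\cap L^{2+p}_{t,x}$ and their duals $L^{(2+p)/(1+p)}_{t,x}$. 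No new analytic ingredient enters.

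I would then argue line by line. Lines \eqref{smoothest1def} and \eqref{maxfest1hdef} are literally \eqref{smoothest1} and \eqref{maxfest1h}. For \eqref{strhighschrdecdef} I would interpolate the unitarity of $\cU_2(t)$ on $L^2_x$ (the $L^\infty_tL^2_x$ endpoint, weight $c^0$) against the frequency-localized Strichartz estimate \eqref{strhighschrdec} at a suitable order-$2$ admissible pair, the displayed power of $c$ being what the interpolation weight produces once the factor $c^{(1-1/r)\cdot 2r/a}$ of \eqref{strhighschrdec} is tracked with $r=2$. For \eqref{smoothestfin1def}, \eqref{maxfest4hdef}, \eqref{auxstr1def}, \eqref{auxstr2def}, and \eqref{strest2def} I would first apply \eqref{estparder} to reduce to the derivative acting along $x_i$; then \eqref{smoothestfin1def} is \eqref{smoothestfin1} and \eqref{maxfest4hdef} is \eqref{maxfest4h}, while \eqref{auxstr1def}--\eqref{auxstr2def} follow from the $L^\infty_tL^2_x$ bound \eqref{smoothestfin2} interpolated against the Strichartz bounds \eqref{auxstr1}--\eqref{auxstr2} to produce the $L^{2+p}_{t,x}$ component, and \eqref{strest2def} is the $\alpha=2$ instance of \eqref{strest2} --- noting that $\frac{2}{p+2}+\frac{4}{p(p+2)}=\frac{2}{p}$, so its $c$-exponent $c^{d/2+2/(p+2)+4/(p(p+2))}$ equals the $c^{d/2+2/a}$ of \eqref{strest2} with $a=p$, the dual input norm being taken in the $L^{2+p}_{t,x}$-duality, i.e. $L^{(2+p)/(1+p)}_{t,x}$. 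Finally \eqref{auxstr3def} is the retarded Strichartz estimate \eqref{retstrhighschrdec}/\eqref{retstrestfrdec} for the pair attached to $L^{2+p}_{t,x}$, interpolated with the inhomogeneous $L^\infty_tL^2_x$ bound produced by the duality ($\cL_k$) computation already used in the proof of \eqref{strest2}; this accounts for the factor $c^{8/(p(p+2))}=c^{2\cdot 4/(p(p+2))}$.

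The one genuinely delicate point --- and hence the step I expect to be the main obstacle --- is the bookkeeping of the $c$-powers and of the powers of $\la k_i \ra$ through the interpolations and the repeated uses of \eqref{estparder} and \eqref{danest}: one must check that in each line the admissible pair is chosen so that the interpolation weight is exactly $c^{4/(p(p+2))}$, $c^{1+4/(p(p+2))}$, $c^{d/2+2/(p+2)+4/(p(p+2))}$, or $c^{8/(p(p+2))}$ as stated, and that the derivative losses coming from \eqref{estparder} and \eqref{danest} are absorbed into the displayed powers of $\la k_i \ra$ and no more. Since each individual step has already been established, this is a routine (if somewhat tedious) verification whose outcome is exactly the list of inequalities above, and I would simply record it. The case $r>2$ is handled identically, replacing each input estimate by its $r>2$ counterpart (the estimates \eqref{smoothestr1}--\eqref{smoothestfinr2}, \eqref{maxfest1hr}--\eqref{maxfest4hr}, and \eqref{retstrhighschrdec}).
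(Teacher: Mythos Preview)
Your proposal is correct and matches the paper's own treatment: the paper explicitly omits the proof, stating that the proposition ``follows from the results of the previous subsections, together with \eqref{estparder}.'' Your line-by-line identification of each estimate with its source in Sections~\ref{smoothsubsec}--\ref{maxfsubsec}, modulo the direction-switching lemma \eqref{estparder} and interpolation against Strichartz pairs, is exactly the content that the paper leaves implicit.
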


For the case $r>2$ we have the following results

\begin{remark}
\begin{enumerate}
\item Let $(a,b)$ be order-$r$ admissible, $i \in \{1,\ldots,d\}$, $q \geq 2$, 
$\frac{4r}{d} < q <+\infty$ and $k \in \Z^d$ with $|k|_\infty \sgeq K(c)$, then
\begin{align} \label{strest2r}
\| \square_k \; \d^\alpha_{x_i} \cA_r f \|_{L^{q,\infty}_{x_i;(x_j)_{j \neq i},t}} \sleq c^{d\left(1-\frac{1}{r}\right) + \left(1-\frac{1}{r}\right)\frac{2r}{a} } \la |k|_\infty \ra^{\alpha+1/q} \|\square_k \, f\|_{L^{a'}_t L^{b'}_x}, \; \; 0 < |t| \sleq c^{2(r-1)}.
\end{align}

\item Let $(a,b)$ be Schr\"odinger admissible, $i \in \{1,\ldots,d\}$, then
\begin{align}
\| \square_k \d^{2(r-1)}_{x_i}\cA_rf \|_{L^a_t L^b_x} &\sleq c^{r-1+2r/a} \la |k|_\infty \ra^{r-3/2} \|\square_k f \|_{L^{1,2}_{x_i;(x_j)_{j \neq i},t}}, \; \; 0 < |t| \sleq c^{2(r-1)} \label{auxstr1r} \\
\| \square_k \d^{2(r-1)}_{x_i}\cA_2f \|_{L^{\infty,2}_{x_i;(x_j)_{j \neq i},t}} &\sleq c^{r-1+2r/a} \la |k|_\infty \ra^{r-3/2} \|\square_k f \|_{L^{a'}_t L^{b'}_x}, \; \; 0 < |t| \sleq c^{2(r-1)} \label{auxstr2r}.
\end{align}
\end{enumerate}
\end{remark}

\begin{proposition} \label{auxpropr}
Let $d \geq 2$, $4r/d \leq p < + \infty$, $2 \leq q <+\infty$, $q > 4r/d$, 
$k \in \Z^d$ with $|k|_\infty = |k_i| \sgeq c$, $h,i,l \in \{1,\ldots,d\}$. Then
\begin{align}
\left\| \square_k D_{x_i}^{r-1/2}\cU_r(t)\psi_0 \right\|_{ L^{\infty,2}_{x_i;(x_j)_{j\neq i},t} } &\sleq c^{r-1} \|\square_k \psi_0\|_{L^2}, \label{smoothest1rdef} \\
\| \square_k \, \cU_r(t)\psi_0 \|_{L^{q,\infty}_{x_i;(x_j)_{j \neq i},t}} &\sleq c^{d\left(1-\frac{1}{r}\right)} \la k \ra^{1/q} \|\square_k\psi_0\|_{L^2}, \; 0 < |t| \sleq c^{2(r-1)}, \label{maxfest1hrdef} \\
\| \square_k \cU_r(t)\phi_0 \|_{L^\infty_t L^2_x \cap L^{2(r-1)+p}_x} &\sleq c^{\frac{4(r-1)^2}{p(p+2(r-1))}} \|\square_k\phi_0\|_{L^2(\R^d)}, \; 0 < |t| \sleq c^{2(r-1)} \label{strhighschrdecrdef} \\
\| \square_k \d^{2(r-1)}_{x_l} \cA_r f \|_{ L^{\infty,2}_{x_i;(x_j)_{j\neq i},t} } &\sleq \|\square_k \, f\|_{ L^{1,2}_{x_i;(x_j)_{j\neq i},t} }, \label{smoothestfin1rdef} \\
\| \square_k \, \d^{2(r-1)}_{x_l}\cA_r f \|_{L^{q,\infty}_{x_h;(x_j)_{j \neq h},t}} &\sleq c^{r-1+d\left(1-\frac{1}{r}\right)} \la k_i \ra^{r-3/2+1/q} \|\square_kf\|_{L^{1,2}_{x_i;(x_j)_{j \neq i},t}}, \; 0 < |t| \sleq c^{2(r-1)}, \label{maxfest4hrdef} \\
\| \square_k \d^{2(r-1)}_{x_l}\cA_rf \|_{L^\infty_t L^2_x \cap L^{2(r-1)+p}_{t,x}} &\sleq c^{ r-1+\frac{4(r-1)^2}{p(p+2(r-1))} } \la k_i \ra^{r-3/2} \|\square_k f \|_{L^{1,2}_{x_i;(x_j)_{j \neq i},t}}, \; 0 < |t| \sleq c^{2(r-1)} \label{auxstr1rdef} \\
\| \square_k \d^{2(r-1)}_{x_l}\cA_rf \|_{L^{\infty,2}_{x_i;(x_j)_{j \neq i},t}} &\sleq c^{ r-1+\frac{4(r-1)^2}{p(p+2(r-1))} } \la k_i \ra^{r-3/2} \|\square_k f \|_{ L^{ \frac{2(r-1)+p}{2r-1+p} }_{t,x} }, \; 0 < |t| \sleq c^{2(r-1)} \label{auxstr2rdef}, \\
\| \square_k \; \d^{2(r-1)}_{x_l} \cA_r f \|_{L^{q,\infty}_{x_i;(x_j)_{j \neq i},t}} &\sleq c^{ \frac{d}{2} + \frac{2r}{p+2(r-1)} + \frac{4(r-1)^2}{p(p+2(r-1))} }  \la k_i \ra^{2(r-1)+1/q} \|\square_k \, f\|_{ L^{ \frac{2(r-1)+p}{2r-1+p} }_{t,x} }, \nonumber \\
&\; \; \; \; \; 0 < |t| \sleq c^{2(r-1)}, \label{strest2rdef} \\
\| \square_k \; \cA_r f \|_{L^\infty_tL^2_x \cap L^{2(r-1)+p}_{t,x}} &\sleq c^{\frac{8(r-1)^2}{p(p+2(r-1))}} \|\square_k \, f\|_{L^{ \frac{2(r-1)+p}{2r-1+p}  }_{t,x}}. \label{auxstrrdef}
\end{align}
\end{proposition}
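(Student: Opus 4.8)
The plan is to observe that Proposition~\ref{auxpropr} is the $r>2$ counterpart of Proposition~\ref{auxprop}, and to assemble it by running, line by line, the (omitted) proof of Proposition~\ref{auxprop} with the $r=2$ ingredient estimates replaced by their $r>2$ analogues, which have all been recorded in the remarks of Sections~\ref{tdecsubsec}--\ref{LWPproofsubsec}. Concretely, several of the asserted bounds are nothing but restatements: \eqref{smoothest1rdef} is \eqref{smoothestr1}, \eqref{smoothestfin1rdef} is \eqref{smoothestfinr1}, \eqref{maxfest1hrdef} is \eqref{maxfest1hr}, and \eqref{maxfest4hrdef} is obtained from \eqref{maxfest2hr} and \eqref{maxfest3hr} after transferring the $2(r-1)$ derivatives from the direction $x_l$ of the operator onto the direction $x_i$ of the anisotropic norm; this last step uses the $r>2$ version of the estimate \eqref{estparder}, which goes through verbatim since its proof only bounds the $L^1$ norm of the Fourier multiplier $(\xi_l/\xi_m)^{2(r-1)}$ restricted to $|k|_\infty=|k_m|\sgeq c$.

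For the Strichartz-type bounds with $\square_k$-decomposition, namely \eqref{strhighschrdecrdef} and \eqref{auxstrrdef}, the plan is to start from \eqref{strhighschrdec}, \eqref{retstrhighschrdec} and \eqref{retstrestfrdec}, all of which are stated for general $r\geq1$, and to reach the spaces $L^\infty_tL^2_x\cap L^{2(r-1)+p}_{t,x}$ by interpolation within the order-$r$ admissible Strichartz family, together with Bernstein on the frequency-localized pieces and H\"older in time over $[0,T]$, $T\sleq c^{2(r-1)}$; carrying the $c$-powers $c^{(1-1/r)2r/p}$ through these steps produces the exponents $c^{\frac{4(r-1)^2}{p(p+2(r-1))}}$ and $c^{\frac{8(r-1)^2}{p(p+2(r-1))}}$, which for $r=2$ collapse to $c^{\frac{4}{p(p+2)}}$ and $c^{\frac{8}{p(p+2)}}$, consistently with \eqref{strhighschrdecdef} and \eqref{auxstr3def}. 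The mixed estimates \eqref{auxstr1rdef}, \eqref{auxstr2rdef} and \eqref{strest2rdef}, in which the partial derivative and the anisotropic Lebesgue direction differ, are then obtained exactly as \eqref{auxstr1}, \eqref{auxstr2} and \eqref{strest2} in the case $r=2$: one runs the $TT^\ast$/duality argument based on the maximal function estimate \eqref{maxfest1hr} to produce the auxiliary bound $\|\square_k\int\cU_r(t-\tau)f(\tau)\di\tau\|_{L^\infty_tL^2_x}\sleq c^{d(1-1/r)}\la k\ra^{1/q}\|\square_k f\|_{L^{q'}_{x_i}L^1_{(x_j)_{j\neq i},t}}$, and combines it with the retarded Strichartz estimate \eqref{retstrhighschrdec}, the smoothing estimate \eqref{smoothestfinr2}, \eqref{danest} and the $r>2$ form of \eqref{estparder}; this yields \eqref{strest2r}--\eqref{auxstr2r}, hence the corresponding lines of the Proposition.

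The step I expect to be the only real obstacle is not analytic but organisational: keeping the $c$-powers exact through the chain of interpolations and $TT^\ast$ dualities, and making sure the various frequency thresholds --- $|k|_\infty\sgeq c$, $|k|_\infty\sgeq K(c)$ and $|k_i|\sgeq K(c)^2$ --- remain compatible when the above estimates are composed. The one point where $r>2$ genuinely differs from $r=2$, namely that the stationary-phase analysis of $\cU_r$ now involves a whole shell of critical radii lying between $1$ and $\cO(\epsilon^{-1/2})$ rather than the single sphere $|\xi|=R_1$, has already been absorbed into the medium-frequency decay \eqref{dispestrmed} and into the maximal function estimates \eqref{maxfest1hr}--\eqref{maxfest4hr} (see Remark~\ref{genrcase}); here one simply invokes those, so that no new stationary-phase work is required and the argument reduces to the bookkeeping just described.
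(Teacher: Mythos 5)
Your proposal matches the paper's own (largely implicit) argument: the paper states Proposition \ref{auxpropr} without proof, as the $r>2$ counterpart of Proposition \ref{auxprop}, to be assembled from the $r$-dependent smoothing, maximal-function and Strichartz estimates of the preceding subsections together with the derivative-exchange estimate \eqref{estparder}, which is precisely the bookkeeping you describe. Your identification of which lines are restatements (\eqref{smoothestr1}, \eqref{smoothestfinr1}, \eqref{maxfest1hr}, \eqref{maxfest2hr}--\eqref{maxfest3hr}) and which require the $TT^\ast$/duality combination with \eqref{retstrhighschrdec}, \eqref{retstrestfrdec} and \eqref{smoothestfinr2} is consistent with how the paper derives the $r=2$ case.
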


For convenience, we state some technical results related to nonlinear mapping 
estimates. For $i=1,\ldots,d$ and $N \in \N$ we set
\begin{align*}
\B_{i,1}^{(N)} &:= \{ (k^{(1)},\ldots,k^{(N)}) \in (\Z^d)^N : \max(|k^{(1)}_i|,\ldots,|k^{(N)}_i|) \sgeq c \}, \\
\B_{i,2}^{(N)} &:= \{ (k^{(1)},\ldots,k^{(N)}) \in (\Z^d)^N : \max(|k^{(1)}_i|,\ldots,|k^{(N)}_i|) \sleq c \}.
\end{align*}

\begin{lemma}
Let $s \geq 0$, $N \geq 3$, $i \in \{1,\ldots,d\}$, then
\begin{align}
&\left\| \sum_{\B_{i,1}^{(N)}} \square_{k^{(1)}} \, \psi_1 \cdots \square_{k^{(N)}} \, \psi_N \right\|_{ l^{1,s}_{\square,i,c}(L^{1,2}_{x_1,(x_j)_{j \neq 2},t}) } \nonumber \\
&\sleq \sum_{\alpha=1}^N \|\psi_\alpha\|_{ \cap_{h=1}^d l^{1,s}_{\square,h,c}(L^{\infty,2}_{x_h,(x_j)_{j \neq h},t}) } \prod_{ \substack{\beta=1,\ldots,d \\ \beta \neq \alpha} } \|\psi_\beta\|_{ \cap_{h=1}^d l^{1}_{\square}(L^{N-1\infty}_{x_h,(x_j)_{j \neq h},t}) } . \label{estnlin1}
\end{align}
\end{lemma}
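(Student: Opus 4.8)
The estimate \eqref{estnlin1} is a multilinear bound in the frequency-uniform (modulation) framework, and the plan is to follow the template for such estimates from \cite{wang2009global} and \cite{ruzhansky2016global}. Two structural facts drive the argument. First, \emph{almost orthogonality under products}: since each $\sigma_k$ is supported in a cube of side $O(1)$ centred at $k$, the function $\square_k\bigl(\square_{k^{(1)}}\psi_1\cdots\square_{k^{(N)}}\psi_N\bigr)$ vanishes unless $|k-(k^{(1)}+\cdots+k^{(N)})|_\infty\leq C(N)$; thus for each $k$ the inner sum effectively runs over tuples with $k^{(1)}+\cdots+k^{(N)}=k+O(1)$, and dually, once the $k^{(\alpha)}$ are fixed there are only $O(1)$ admissible $k$. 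Second, $\square_k$ factorises into one-dimensional Fourier multipliers with smooth bump symbols, so it is bounded, uniformly in $k$, on every anisotropic mixed space $L^{p_1,p_2}_{x_l;(x_j)_{j\neq l},t}$; in particular the outer $\square_k$ defining $\|\cdot\|_{l^{1,s}_{\square,i,c}(\cdot)}$ may be discarded up to a fixed constant.

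First I would expand the left-hand side using the first fact, reducing to the estimation, for each $k\in\Z^d_i$, of $\langle k\rangle^{s}\sum\|\square_{k^{(1)}}\psi_1\cdots\square_{k^{(N)}}\psi_N\|_{L^{1,2}_{x_i;(x_j)_{j\neq i},t}}$, the sum taken over tuples in $\B_{i,1}^{(N)}$ with $\sum_\alpha k^{(\alpha)}=k+O(1)$. Since $k=\sum_\alpha k^{(\alpha)}+O(1)$ and $s\geq0$ we have $\langle k\rangle^{s}\lesssim_N\max_\alpha\langle k^{(\alpha)}\rangle^{s}$, so I would split the tuple-sum according to the index $\gamma$ realising $\max_\alpha|k^{(\alpha)}|_\infty$ (ties broken by least index); this places the whole weight on a single factor $\psi_\gamma$ and produces the outer sum $\sum_{\alpha=1}^N$ on the right. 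On each such block the definition of $\B_{i,1}^{(N)}$ gives $\max_\beta|k^{(\beta)}_i|\gtrsim c$, whence $|k^{(\gamma)}|_\infty=\max_\beta|k^{(\beta)}|_\infty\geq\max_\beta|k^{(\beta)}_i|\gtrsim c$; letting $h$ be a coordinate with $|k^{(\gamma)}_h|=|k^{(\gamma)}|_\infty$ one gets $k^{(\gamma)}\in\Z^d_h$ with $|k^{(\gamma)}_h|>c$ (for $c$ large), so $\square_{k^{(\gamma)}}\psi_\gamma$ is seen by $\|\psi_\gamma\|_{l^{1,s}_{\square,h,c}(L^{\infty,2}_{x_h;(x_j)_{j\neq h},t})}\leq\|\psi_\gamma\|_{\cap_h l^{1,s}_{\square,h,c}(\cdot)}$. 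The remaining $N-1$ factors are generic and, through the intersection over directions in $\|\psi_\beta\|_{\cap_h l^{1}_\square(L^{N-1,\infty}_{x_h;(x_j)_{j\neq h},t})}$, are controlled in every anisotropic direction.

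It then remains to distribute the anisotropic norm of the product by (iterated) H\"older --- placing the distinguished factor into $L^{\infty,2}$ and each of the $N-1$ generic factors into an $L^{N-1,\infty}$ space, the exponents balancing as $\tfrac1\infty+(N-1)\tfrac1{N-1}=1$ and $\tfrac12+(N-1)\tfrac1\infty=\tfrac12$ --- and to reconcile the direction $h$ attached to the distinguished factor with the direction $i$ of the target norm, using the uniform boundedness and unit-cube Bernstein properties of $\square_k$ together with the change-of-anisotropic-direction devices already present in the paper (in the spirit of \eqref{estparder} and \eqref{danest}). Finally, for fixed $k^{(\gamma)}$ the residual sum over the generic frequencies $(k^{(\beta)})_{\beta\neq\gamma}$ and over $k\in\Z^d_i$ decouples: by the $O(1)$ overlap the $k$-sum costs only a constant, and the $\ell^{1}$-structure of $l^{1}_\square$ converts the product of sums into the product of the $l^{1}_\square$-norms of the $\psi_\beta$; summation over $h\in\{1,\dots,d\}$ and $\gamma\in\{1,\dots,N\}$ then yields the right-hand side.

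The step I expect to be the real obstacle is precisely this coordination: the distinguished factor must \emph{simultaneously} carry the full $\langle k\rangle^{s}$ weight, lie past the threshold $c$ in one of the restricted index sets $\Z^d_h$ (so that it is controlled by $\|\cdot\|_{l^{1,s}_{\square,h,c}}$ rather than only by $\|\cdot\|_{l^{1}_\square}$), and, after H\"older, leave exactly the anisotropic norm $L^{1,2}_{x_i;(x_j)_{j\neq i},t}$; keeping these requirements mutually compatible, uniformly in $c$ and throughout the case split over $\gamma$ and $h$, is the delicate matter and is where the argument of \cite{ruzhansky2016global} must be followed carefully. The remaining ingredients --- the orthogonality reduction, the weight peeling, and the final geometric-series summation --- are routine.
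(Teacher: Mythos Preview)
Your proposal is correct in approach and in fact supplies substantially more detail than the paper itself: the paper does not give a self-contained proof of this lemma but simply refers the reader to the proof of Lemma~3.1 in \cite{ruzhansky2016global}. The strategy you outline---almost-orthogonality of the $\square_k$ under products, peeling the weight $\langle k\rangle^s$ onto the factor of largest frequency, using the constraint $(k^{(\alpha)})\in\B^{(N)}_{i,1}$ to force that dominant frequency past the threshold $c$ in some direction $h$, and then applying anisotropic H\"older with exponents $(\infty,2)$ on the distinguished factor and $(N-1,\infty)$ on the others---is exactly the argument carried out in that reference, so there is no substantive difference to report.
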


\begin{proof}
See proof of Lemma 3.1 in \cite{ruzhansky2016global}.
\end{proof}

\begin{lemma}
Let $N \geq 1$ and $i \in \{1,\ldots,d\}$, and assume that 
$1 \leq p,q,p_1,q_1,\ldots,p_N,q_N \leq +\infty$ satisfy
\begin{align*}
\frac{1}{p} = \frac{1}{p_1} + \ldots + \frac{1}{p_N}, &\; \; \frac{1}{q} = \frac{1}{q_1} + \ldots + \frac{1}{q_N},
\end{align*}
then
\begin{align} \label{estnlin2}
\left\| \sum_{\B_{i,2}^{(N)}} \square_{k^{(1)}} \, \psi_1 \cdots \square_{k^{(N)}} \, \psi_N \right\|_{ l^1_{\square,i,c}(L^q_t L^p_x) } &\sleq c^d \, N^d \, \sum_{\B_{i,2}^{(N)}} \prod_{i=1}^N \|\square_{k^{(i)}} \, \psi_i\|_{L^{q_i}_t L^{p_i}_x}.
\end{align}
\end{lemma}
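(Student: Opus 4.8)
The plan is to exploit that, on the low-frequency region $\B_{i,2}^{(N)}$, the product $\square_{k^{(1)}}\psi_1\cdots\square_{k^{(N)}}\psi_N$ is frequency-localized (in the $\xi_i$-direction) to a set of size $\cO(Nc)$, so that only $\cO(N^dc^d)$ of the blocks $\square_k$ with $k\in\Z^d_i$ can see the full sum $F:=\sum_{\B_{i,2}^{(N)}}\square_{k^{(1)}}\psi_1\cdots\square_{k^{(N)}}\psi_N$. First I would record the frequency support of a generic summand: for a fixed tuple $(k^{(1)},\ldots,k^{(N)})\in\B_{i,2}^{(N)}$, the function $g:=\prod_{\alpha=1}^N\square_{k^{(\alpha)}}\psi_\alpha$ has $\widehat g$ supported in the Minkowski sum $\sum_{\alpha=1}^N\big(k^{(\alpha)}+[-3/4,3/4]^d\big)$; since $(k^{(1)},\ldots,k^{(N)})\in\B_{i,2}^{(N)}$ forces $|k^{(\alpha)}_i|\sleq c$ for every $\alpha$, the $\xi_i$-coordinate of any point of that support has modulus $\sleq Nc$. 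Summing over $\B_{i,2}^{(N)}$, the same bound holds for $\widehat F$, hence $\square_kF\equiv0$ unless $|k_i|$ is at most a fixed multiple of $Nc$.

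Since $\|\cdot\|_{l^1_{\square,i,c}(L^q_tL^p_x)}$ only runs over $k\in\Z^d_i=\{k:|k_i|=|k|_\infty>c\}$, the previous observation shows that the active index set $S:=\{k\in\Z^d_i:\square_kF\not\equiv0\}$ is contained in $\{k\in\Z^d:|k_\ell|\sleq Nc\ \forall\ell\}$, so $\#S\sleq(Nc)^d$. Combining this with the uniform boundedness of the frequency-uniform projectors on mixed Lebesgue spaces --- $\|\square_kg\|_{L^q_tL^p_x}\leq\|\cF^{-1}\sigma_k\|_{L^1_x}\|g\|_{L^q_tL^p_x}$ with $\|\cF^{-1}\sigma_k\|_{L^1_x}$ bounded uniformly in $k$ (a standard fact underlying the very definition of modulation spaces), $\square_k$ acting only in $x$ and hence commuting with $\|\cdot\|_{L^q_t}$ --- I obtain
\[ \|F\|_{l^1_{\square,i,c}(L^q_tL^p_x)}=\sum_{k\in S}\|\square_kF\|_{L^q_tL^p_x}\leq(\#S)\,\sup_{k}\|\square_kF\|_{L^q_tL^p_x}\sleq N^dc^d\,\|F\|_{L^q_tL^p_x}. \]

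It then remains to estimate $\|F\|_{L^q_tL^p_x}$ termwise: Minkowski's inequality gives $\|F\|_{L^q_tL^p_x}\leq\sum_{\B_{i,2}^{(N)}}\|\prod_\alpha\square_{k^{(\alpha)}}\psi_\alpha\|_{L^q_tL^p_x}$, and the generalized H\"older inequality in $x$ (using $1/p=\sum_\alpha1/p_\alpha$) followed by H\"older in $t$ (using $1/q=\sum_\alpha1/q_\alpha$) gives $\|\prod_\alpha\square_{k^{(\alpha)}}\psi_\alpha\|_{L^q_tL^p_x}\leq\prod_\alpha\|\square_{k^{(\alpha)}}\psi_\alpha\|_{L^{q_\alpha}_tL^{p_\alpha}_x}$; stringing the three steps together yields \eqref{estnlin2} (and, a posteriori, the absolute convergence of the series defining $F$ whenever the right-hand side of \eqref{estnlin2} is finite). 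I do not anticipate a genuine obstacle here; the only point requiring some care is the support/counting step, in particular converting the one-coordinate bound $|k_i|\sleq Nc$ into the full bound $|k|_\infty\sleq Nc$ by means of the definition of $\Z^d_i$ --- which is precisely what generates the $c^d$ (and $N^d$) loss in the constant.
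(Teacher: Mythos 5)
Your proof is correct and, in contrast with the paper, self-contained: for this lemma the paper gives no argument at all and simply refers to Lemma 3.3 of \cite{ruzhansky2016global}. Your route also differs somewhat from the standard one behind that citation. There, one interchanges the sum over tuples with the sum over $k\in\Z^d_i$ and uses, for each \emph{fixed} tuple $(k^{(1)},\ldots,k^{(N)})$, that $\square_k\bigl(\square_{k^{(1)}}\psi_1\cdots\square_{k^{(N)}}\psi_N\bigr)$ vanishes unless $|k-(k^{(1)}+\cdots+k^{(N)})|_\infty\sleq N$, so that only $\cO(N^d)$ blocks survive per tuple; generalized H\"older then closes the estimate. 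You instead count the \emph{total} number of active blocks $k\in\Z^d_i$ for the whole sum $F$, converting the one-coordinate support bound $|\xi_i|\sleq Nc$ into $|k|_\infty\sleq Nc$ via the constraint $|k_i|=|k|_\infty$ built into $\Z^d_i$, and then estimate each $\|\square_kF\|_{L^q_tL^p_x}$ by $\|F\|_{L^q_tL^p_x}$ using the uniform $L^1_x$ bound on $\cF^{-1}\sigma_k$, before applying Minkowski and H\"older termwise. Your version is slightly cruder --- the per-tuple orthogonality argument actually yields the estimate without the $c^d$ factor --- but it proves exactly the inequality as stated, and it has the merit of making transparent where the $c^dN^d$ constant originates. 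All the ingredients you invoke (Minkowski's inequality, generalized H\"older in $x$ and then in $t$ under the scaling relations on $(p_\alpha,q_\alpha)$, Young's inequality for the projectors $\square_k$) are applied correctly, so I see no gap.
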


\begin{proof}
See proof of Lemma 3.3 in \cite{ruzhansky2016global}.
\end{proof}

\begin{lemma}
Let $s \geq 0$, $N \geq 1$ and $i \in \{1,\ldots,d\}$, and assume that 
$1 \leq p,q,p_1,q_1,\ldots,p_N,q_N \leq +\infty$ satisfy
\begin{align*}
\frac{1}{p} = \frac{1}{p_1} + \ldots + \frac{1}{p_N}, &\; \; \frac{1}{q} = \frac{1}{q_1} + \ldots + \frac{1}{q_N},
\end{align*}
then
\begin{align} \label{estnlin3}
\| \psi_1 \cdots \psi_N \|_{l^{1,s}_{\square}(L^p_t L^q_x)} &\sleq N^d \prod_{i=1}^N \| \psi_i \|_{l^{1,s}_{\square}(L^{p_i}_t L^{q_i}_x)}.
\end{align}
\end{lemma}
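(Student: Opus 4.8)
The plan is to reduce \eqref{estnlin3} to an almost-orthogonality estimate for the frequency-uniform blocks, followed by H\"older's inequality in the iterated norm $L^p_tL^q_x$ and a Peetre-type redistribution of the weight $\la k\ra^s$. First I would decompose each factor, $\psi_\alpha=\sum_{k^{(\alpha)}\in\Z^d}\square_{k^{(\alpha)}}\psi_\alpha$, and use the triangle inequality in the defining sum of the $l^{1,s}_\square$-norm to bound the left-hand side by $\sum_{(k^{(1)},\ldots,k^{(N)})\in(\Z^d)^N}\big\|\prod_{\alpha}\square_{k^{(\alpha)}}\psi_\alpha\big\|_{l^{1,s}_\square(L^p_tL^q_x)}$. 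The key observation is that for a fixed multi-index the product $\prod_\alpha\square_{k^{(\alpha)}}\psi_\alpha$ has spatial Fourier transform supported in a cube centered at $k^{(1)}+\cdots+k^{(N)}$ of side length $\lesssim N$ (each $\square_{k^{(\alpha)}}\psi_\alpha$ is frequency-localized to $k^{(\alpha)}+[-2/3,2/3]^d$), so at most $\lesssim N^d$ of the blocks $\square_k$ act nontrivially on it; combined with the uniform bound $\|\square_k g\|_{L^p_tL^q_x}\lesssim\|g\|_{L^p_tL^q_x}$ (the multiplier $\sigma_k$ is, up to modulation, a fixed Schwartz bump) and the comparison $\la k\ra\sim\la k^{(1)}+\cdots+k^{(N)}\ra$ on that cube, this is precisely where the combinatorial factor $N^d$ is produced.

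Next I would apply H\"older's inequality in $L^p_tL^q_x$ --- first in the $x$-variable, then in the $t$-variable --- using the hypotheses $1/p=\sum_\alpha 1/p_\alpha$ and $1/q=\sum_\alpha 1/q_\alpha$, to obtain $\big\|\prod_\alpha\square_{k^{(\alpha)}}\psi_\alpha\big\|_{L^p_tL^q_x}\leq\prod_\alpha\|\square_{k^{(\alpha)}}\psi_\alpha\|_{L^{p_\alpha}_tL^{q_\alpha}_x}$. To deal with the weight, since $s\geq 0$ the elementary inequality $\la k^{(1)}+\cdots+k^{(N)}\ra^s\lesssim\sum_{\beta}\la k^{(\beta)}\ra^s$ lets me move the full weight onto a single factor; summing over the multi-index then factorizes, yielding $\|\psi_\beta\|_{l^{1,s}_\square(L^{p_\beta}_tL^{q_\beta}_x)}$ on the distinguished factor and, for $\alpha\neq\beta$, $\|\psi_\alpha\|_{l^1_\square(L^{p_\alpha}_tL^{q_\alpha}_x)}\leq\|\psi_\alpha\|_{l^{1,s}_\square(L^{p_\alpha}_tL^{q_\alpha}_x)}$ (the embedding $l^1_\square\hookrightarrow l^{1,s}_\square$ for $s\geq0$). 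Summing the $N$ resulting terms over $\beta$ and absorbing the extra polynomial powers of $N$ into the implied constant of $\sleq$ then gives \eqref{estnlin3}. This is the same mechanism as in Lemma 3.3 of \cite{ruzhansky2016global} and as in the companion estimates \eqref{estnlin1}--\eqref{estnlin2}, so one could alternatively just cite it.

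The main obstacle is combinatorial bookkeeping rather than analysis: one must check that the near-diagonal frequency count really is $O(N^d)$ and that the weight redistribution together with the final summation over $\beta$ cost at most a polynomial-in-$N$ factor, so that the bound holds with the advertised $N^d$ dependence (the remaining $N$-, $d$- and $s$-dependent constants being swallowed by $\sleq$). The analytic ingredients --- the support properties of $\square_k$, the uniform boundedness of $\square_k$ on mixed Lebesgue spaces, and the iterated H\"older inequality --- are entirely routine.
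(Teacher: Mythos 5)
Your argument is correct and is essentially the standard proof of the lemma that the paper simply cites (Lemma 8.2 of Wang's paper): almost-orthogonality of the blocks giving the $O(N^d)$ near-diagonal count, iterated H\"older, and Peetre redistribution of $\la k \ra^s$ onto a single factor, with the remaining $N$- and $s$-dependent constants absorbed into $\sleq$ (harmless here since $N$ is bounded in the application). The only slip is cosmetic: the embedding you invoke goes $l^{1,s}_\square \hookrightarrow l^{1}_\square$ for $s\geq 0$, not the other way around, though the inequality you actually use, $\|\psi_\alpha\|_{l^{1}_\square} \leq \|\psi_\alpha\|_{l^{1,s}_\square}$, is stated correctly.
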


\begin{proof}
See proof of Lemma 8.2 in \cite{wang2007global}.
\end{proof}

\begin{proof}[Proof (Proposition \ref{LWPhighordschr}, part $(i)$, case $r=2$)] 
Since the nonlinearity contains terms of the form $(\d_x^\alpha \psi)^\beta$ 
with $|\alpha| \leq 2$, $|\beta| \geq m+1$, we introduce the space 
\begin{align*}
D &:= \{ \psi \in \cS': \|\psi\|_D := \sum_{|\alpha| \leq 2} \sum_{l=1}^3 \sum_{i,j=1}^d \rho_l^{(i)}(\d_{x_j}^\alpha\psi) \sleq c^{-\delta_0} \},
\end{align*}
where
\begin{align*}
\rho_1^{(i)}(\psi) &:= \|\psi\|_{l^{1,s-r+1/2+1/m}_{\square,i,c}(L^{\infty,2}_{x_i;(x_j)_{j \neq i},t})}, \\
\rho_2^{(i)}(\psi) &:= \|\psi\|_{l^{1,s}_{\square}(L^{m,\infty}_{x_i;(x_j)_{j \neq i},t})}, \\
\rho_3^{(i)}(\psi) &:= \|\psi\|_{l^{1,s+1/m}_{\square}(L^\infty_t L^2_x \cap L^{2+m}_{t,x})}.
\end{align*}
and for some $\delta_0>0$ that we will choose later. \\

Since $\|\psi\|_D=\|\bar{\psi}\|_D$, without loss of generality we can assume 
that the nonlinearity contain only terms of the form
\begin{align*}
\psi^{\beta_0}(\d_x^{\alpha_1}\psi)^{\beta_1}(\d_x^{\alpha_2}\psi)^{\beta_2} &=: \Psi_1\ldots\Psi_R,
\end{align*}
where $R:=|\beta|=\beta_0+|\beta_1|+|\beta_2|$, $|\alpha_i|=i$ ($i=1,2$). \\

To prove the first part of Proposition \ref{LWPhighordschr} we will show that the map 
\begin{align*}
\cF &: D \to D, \\
&\psi(t) \mapsto \cU_2(t)\psi_0+i \cA_2P( (\d^\alpha_x\psi)_{|\alpha| \leq 2}, (\d^\alpha_x\bar{\psi})_{|\alpha| \leq 2} )
\end{align*}
is a contraction mapping. \\

First, we have that by Proposition \ref{auxprop} 
\begin{align*}
\|\cU_2(t)\psi_0\|_D &\sleq c^{ \frac{d}{2} + \frac{4}{m(m+2)} } \|\psi_0\|_{M^{s+3+1/m}_{2,1}}.
\end{align*}
Now, for the estimate of $\rho_1^{(i)}(\cA_2 \d^\alpha_{x_j}F)$ ($i,j=1,\ldots,d$) 
it suffices to estimate $\rho_1^{(1)}(\cA_2 \d^\alpha_{x_1}F)$: indeed, by 
\eqref{estparder}
\begin{align*}
\rho_1^{(1)}(\cA_2 \d^\alpha_{x_2}F) &\sleq \rho_1^{(1)}(\cA_2 \d^\alpha_{x_1}F).
\end{align*}
Using frequency-uniform decomposition, we write
\begin{align*}
\square_k(\Psi_1 \cdots \Psi_R) &= \sum_{\B^{(R)}_{1,1}} \square_k( \square_{k^{(1)}} \, \Psi_1 \cdots \square_{k^{(R)}} \, \Psi_R) + \sum_{\B^{(R)}_{1,2}} \square_k( \square_{k^{(1)}} \, \Psi_1 \cdots \square_{k^{(R)}} \, \Psi_R).
\end{align*}
By exploiting \eqref{smoothestfin1def} and \eqref{estnlin1} for the first sum 
and \eqref{auxstr2def} and \eqref{estnlin3} for the second sum we obtain
\begin{align*}
\rho_1^{(1)}(\cA_2 \, \d_{x_1}^{\alpha}(\Psi_1\cdots\Psi_R)) &\sleq \left\| \sum_{ \B^{(R)}_{1,1} } \square_{k^{(1)}} \, \Psi_1 \cdots \square_{k^{(R)}} \, \Psi_R \right\|_{l^{1,s-r+1/2+1/m}_{\square,1,c}(L^{1,2}_{x_1,(x_j)_{j \neq 1},t})} \\ 
&+ c^{1+\frac{4}{R^2-1}} \, \left\| \sum_{ \B^{(R)}_{1,1} } \square_{k^{(1)}} \, \Psi_1 \cdots \square_{k^{(R)}} \, \Psi_R \right\|_{ l^1_{\square,1,c}( L^{\frac{R+1}{R}}_{t,x} ) } \\
&\sleq c^{1+\frac{4}{R^2-1}+d} \, R^d \|\psi\|_D^R.
\end{align*}

Next, we estimate $\rho_2^{(1)}(\cA_2(\Psi_1 \cdots \Psi_R))$ and 
$\rho_3^{(1)}(\cA_2(\Psi_1 \cdots \Psi_R))$. 
By \eqref{auxstr3def} and \eqref{strest2def} we have 
\begin{align*}
\sum_{j=2}^3 \rho_j^{(1)}(\cA_2(\Psi_1 \cdots \Psi_R)) &\sleq c^{ \frac{d}{2} + \frac{2}{m+2} + \frac{8}{m(m+2)} } \|\Psi_1 \cdots \Psi_R\|_{l^{1,s+1/m}(L^{\frac{2+m}{1+m}}_{t,x})} \\
&\stackrel{\eqref{estnlin3}}{\sleq} c^{ \frac{d}{2} + \frac{2}{m+2} + \frac{8}{m(m+2)} } R^d \|\psi\|_D^R.
\end{align*}

Then we consider $\rho_2^{(1)}(\cA_2 \, \d^2_{x_1} (\Psi_1 \cdots \Psi_R))$: 
we have
\begin{align*}
\rho_2^{(1)}(\cA_2 \, \d^2_{x_1} (\Psi_1 \cdots \Psi_R)) &\sleq \left( \sum_{\substack{k \in \Z^d \\ |k|_\infty \sgeq c}} + \sum_{\substack{k \in \Z^d \\ |k|_\infty \sleq c}} \right) \| \square_k \, \cA_2 \, \d^2_{x_1} (\Psi_1 \cdots \Psi_R) \|_{L^{m,\infty}_{x_1;(x_j)_{j \neq 1},t}} \\
&=: III + IV.
\end{align*}
Again by \eqref{strest2def} and \eqref{estnlin3} we obtain 
\begin{align*}
IV &\sleq c^{ \frac{d}{2} + \frac{2}{m+2} + \frac{4}{m(m+2)} } \|\Psi_1 \cdots \Psi_R\|_{l^{1,s+1/m}(L^{\frac{2+m}{1+m}}_{t,x})} \\
&\sleq c^{ \frac{d}{2} + \frac{2}{m+2} + \frac{4}{m(m+2)} } R^d \|\psi\|_D^R.
\end{align*}
Furthermore, we have that
\begin{align*}
III &\sleq \left( \sum_{k \in \Z^d_1} + \cdots + \sum_{k \in \Z^d_d} \right) \| \square_k \, \cA_2 \, \d^2_{x_1} (\Psi_1 \cdots \Psi_R) \|_{L^{m,\infty}_{x_1; (x_j)_{j \neq 1},t}} \\
&=: G_1(\psi) + \cdots G_d(\psi).
\end{align*}
Using the frequency-uniform decomposition, \eqref{maxfest4hdef}, 
\eqref{estnlin1} and \eqref{estnlin2} we have that
\begin{align*}
G_i(\psi) &\sleq c^{ 1+ 3 \frac{d}{2} } R^d \|\psi\|_D^R, \; i=1,\ldots,d,
\end{align*}
therefore
\begin{align*}
III &\sleq c^{ 1+ 3 \frac{d}{2} } R^d \|\psi\|_D^R,
\end{align*}

Finally, we estimate $\rho_3^{(1)}( \cA_2 \, \d^2_{x_i}(\Psi_1 \cdots \Psi_R) )$. 
It suffices to consider the case $i=1$: by \eqref{retstrestfrdec} and \eqref{danest} we have 
\begin{align*}
\| \square_k \, \cA_2 \d^2_{x_1}f\|_{L^\infty_t L^2 \cap L^{2+m}_{t,x}} & \sleq c^{ \frac{4}{m(m+2)} } \la k_1 \ra^2 \| \square_k \, f \|_{L^{\frac{2+m}{1+m}}_{t,x}},
\end{align*}
and by \eqref{auxstr1def} and \eqref{strest2} we obtain
\begin{align*}
\rho_3^{(1)}( \cA_2 \, \d^2_{x_i}(\Psi_1 \cdots \Psi_R)) &\sleq c^{ 1 + \frac{8}{m(m+2)} + \frac{d}{2} } R^d \|\psi\|_D^R.
\end{align*}

Collecting all estimates, we have
\begin{align}
\| \cF(\psi) \|_D &\sleq c^{ \frac{d}{2} + \frac{4}{m(m+2)} } \|\psi_0\|_{M^{s+3+1/m}_{2,1}} + c^{ 1+ \frac{3d}{2} +\frac{2}{m+2} + \frac{8}{m(m+2)} } \sum_{m+1 \leq R < M} c^{\frac{4}{R^2-1}} R^d \|\psi\|_D^R.
\end{align}
and for $c \geq 1$ sufficiently large we can conclude 
by a standard contraction mapping argument (see for example the proof of 
Theorem 1.1 in \cite{cazenave1990cauchy}), by choosing 
\begin{align}
\delta &> \delta_0(d,m,2) := \max\left( \frac{d}{2}+\frac{4}{m(m+2)} , \frac{1}{m} + \frac{3d}{2m} + \frac{2}{m(m+2)} + \frac{8}{m^2(m+2)} + \frac{4}{m^3} \right). \label{delta02}
\end{align}

\end{proof}

\begin{remark}
By arguing in the same way for the general case $r>2$ we end up with the condition 
\begin{align}
&\delta > \delta_0(d,m,r) 
:= \max\left( d\left(1-\frac{1}{r}\right)+\frac{4r}{m^2( m+2(r-1) )} , \frac{r-1}{m} + \frac{3d}{2m} + \frac{2rm + 8(r-1)^2}{m^2( m+2(r-1) )} + \frac{4(r-1)^2}{m^3} \right). \label{delta0r}
\end{align}
\end{remark}

\begin{remark}
The quantity $\delta_0(d,l,r)$ defined in Corollary \ref{LWPBNFr} is actually 
the right-hand side of \eqref{delta0r} with $m$ replaced by $2(l-1)$.
\end{remark}

In order to prove the second part of Proposition \ref{LWPhighordschr} we will exploit another contraction mapping argument, like in the proof of Theorem 1 in \cite{hao2007well} (which in turn is based on the proof of Theorem 4.1 of \cite{kenig1993small}). In the following, we denote by a $(Q_\alpha)_{\alpha \in \Z^d}$ a fixed family of nonoverlapping cubes of size $R$ such that $\R^d = \bigcup_\alpha Q_\alpha$.

\begin{lemma}
Let $d \geq 2$ and $r \geq 2$, then the following estimates hold.
\begin{itemize}
\item \emph{(Local smoothing, homogeneous case)}
\begin{align}
\sup_{\alpha \in \Z^d} \left( \int_{Q_\alpha} \int_\R |D_x^{r-1/2} \cU_r(t)\psi_0(x)|^2 \di t \di x \right)^{1/2} &\sleq c^{r-1} R^{1/2} \|\psi_0\|_{L^2}, \label{homlocsmooth1} \\
\left\|D_x^r \int_I \cU_r(t-\tau)\psi(\tau,\cdot)\di\tau \right\|_2 &\sleq  c^{r-1} R^{1/2} \sum_{\alpha \in \Z^d} \left( \int_{Q_\alpha} \int_I |\psi(t,x)|^2 \di t \di x\right)^{1/2}; \label{homlocsmooth2}
\end{align}
\item \emph{(Local smoothing, inhomogeneous case)} the solution of the inhomogeneous Cauchy problem
\begin{align*}
-i\psi_t &= A_{c,r}\psi + f(t,x), \; t \in I, x \in \R^d,
\end{align*}
such that $\psi_0 \equiv 0$ satisfies
\begin{align} \label{inhomlocsmooth}
\sup_{\alpha \in \Z^d} \|D_x^{2(r-1)} \psi\|_{L^2_x(Q_\alpha); L^2_t(I)} &\sleq c^{2(r-1)} \, R T^{1/(4d)} \sum_{\alpha \in \Z^d} \|f\|_{L^2_x(Q_\alpha); L^2_t(I)}
\end{align}
\item \emph{(Maximal function estimate)} For any $s>d+\frac{1}{2}$ we have
\begin{align} \label{maxfun}
\left( \int_{\R^d} \sup_{|t| \sleq c^{2(r-1)}} |\cU_r(t)\psi_0(x)|^2 \di x \right)^{1/2} &\sleq c^{d\left(1-\frac{1}{r}\right)} \|\psi_0\|_{H^s}.
\end{align}
\end{itemize}
\end{lemma}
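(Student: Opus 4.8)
The plan is to treat the three groups of estimates separately, in each case first stripping off the harmless gauge factor $e^{ic^2t}$, so that $\cU_r(t)=e^{ic^2t}\tilde\cU_r(t)$ with $\tilde\cU_r(t)=\cF^{-1}e^{itP_r(|\xi|)}\cF$ and $|e^{ic^2t}|=1$ affects none of the norms involved, and then invoking the oscillatory-integral machinery already set up in Section \ref{tdecsubsec}: the three-zone decomposition $P_<+P_=+P_>$, the location of the critical points of $P_r$ at $R=0$ and on the sphere $|\xi|\sim c$ from Remark \ref{genrcase}, Van der Corput's lemma, and the formula \eqref{FTradial}--\eqref{FTradial2} for Fourier transforms of radial functions. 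All three estimates are $c$-quantitative versions of classical results (Kato--Kenig--Ponce--Vega local smoothing and the Kenig--Ponce--Vega maximal function estimate), and I would follow \cite{kenig1993small}, \cite{hao2007well} and \cite{kim2012global}, tracking the powers of $c$ through the dispersive bound \eqref{locdispest} and the frequency-localized smoothing estimates \eqref{smoothest1} and \eqref{smoothestr1}.

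For the homogeneous smoothing \eqref{homlocsmooth1} I would freeze all spatial variables except $x_i$, reduce to a one-dimensional oscillatory integral, and perform the change of variables $\theta=P_r(|\xi|)$ exactly as in the proof of Proposition \ref{smoothprop1}, turning the $L^2_t$ norm into a Plancherel identity in $\theta$ at the cost of the Jacobian $1/P_r'$. On $|\xi|\sleq c$ one has $P_r'(|\xi|)\sim\langle\xi\rangle$ (Schr\"odinger-like), so the smoothing gains $1/2$ derivative and the extra $D_x^{r-1}$ costs at most $c^{r-1}$; on $|\xi|\sgeq c$ one has $P_r'(|\xi|)\sim\langle\xi\rangle^{2r-1}/c^{2(r-1)}$, which is precisely \eqref{smoothestr1} after summing over the $\square_k$, and the $c^{r-1}$ comes out of this Jacobian. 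The neighbourhoods of $R=0$ and $|\xi|\sim c$ are handled by the $\phi_j$-dyadic decomposition and Van der Corput as in Section \ref{tdecsubsec}, and the factor $R^{1/2}$ is produced by Cauchy--Schwarz in the frozen coordinate over an interval of length $R$ against the $L^\infty_{x_i}$ version. The retarded estimate \eqref{homlocsmooth2} I would deduce from \eqref{homlocsmooth1} by a $TT^\ast$/duality argument, splitting $D_x^r=D_x^{1/2}D_x^{r-1/2}$ and placing $D_x^{r-1/2}$ on the free propagator.

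For the inhomogeneous smoothing \eqref{inhomlocsmooth}, write $\psi=i\int_0^t\cU_r(t-\tau)f(\tau)\di\tau$, split $D_x^{2(r-1)}=D_x^{r-1/2}D_x^{r-3/2}$, move $D_x^{r-3/2}$ onto $f$, and compose \eqref{homlocsmooth1} with its retarded counterpart \eqref{homlocsmooth2} --- passing from the full to the retarded Duhamel integral by the Christ--Kiselev lemma --- to get the powers $c^{r-1}\cdot c^{r-1}=c^{2(r-1)}$ and $R^{1/2}\cdot R^{1/2}=R$; alternatively one argues directly via the resolvent representation of Proposition \ref{smoothprop3}. The residual loss $T^{1/(4d)}$ is what one obtains by interpolating the scale-invariant smoothing estimate with the trivial bound $\|\psi\|_{L^\infty_tL^2_x}\sleq T\sup_\tau\|f(\tau)\|_{L^2_x}$ on the borderline exponents (the exponent $1/(4d)$ is not sharp, any fixed small positive power suffices for the application). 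Finally, for the maximal estimate \eqref{maxfun} I would decompose $\psi_0=\sum_{j\ge0}\phi_j(D)\psi_0$ and, on a block of frequency $\sim2^j$, combine the Sobolev embedding in time $\sup_{|t|\sleq T}|g(t,x)|^2\sleq T^{-1}\|g(\cdot,x)\|_{L^2_t}^2+\|g(\cdot,x)\|_{L^2_t}^{1-2\varepsilon}\,\||\d_t|^{1/2+\varepsilon}g(\cdot,x)\|_{L^2_t}^{1+2\varepsilon}$ with H\"older in $x$ over unit cubes $Q_\alpha$ and the local smoothing \eqref{homlocsmooth1}: the operator $|\d_t|^{1/2+\varepsilon}$ acts as the multiplier $|P_r(|\xi|)|^{1/2+\varepsilon}$, so after extracting $D_x^{r-1/2}$ from the free evolution it contributes only $\sim2^{j/2}$ at high frequency, and summing over $j$ in $\ell^1$ (costing an extra $d/2+\varepsilon$ derivatives, via $H^s\subset M^{s-d/2-\varepsilon}_{2,1}$ in Proposition \ref{modspaceprop}(6)) and then over the cubes $Q_\alpha$ to recover $L^2(\R^d)$ yields the bound with $s>d+1/2$; the power $c^{d(1-1/r)}$ is inherited from the frequency-localized maximal estimates \eqref{maxfest1hr}--\eqref{maxfest1lr} (equivalently from \eqref{locdispest}) applied to the high-frequency block.

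The main obstacle I expect is twofold. First, the stationary-phase analysis at the inner critical sphere $|\xi|\sim c$: there the phase $P_r$ degenerates and one must treat separately the genuine critical points of $P_r'$ and $P_r''$ (located between $1$ and $\cO(c)$ by Remark \ref{genrcase}), extracting the correct power of $t$ from Van der Corput while keeping the amplitude estimates uniform in $c$. Second, in \eqref{inhomlocsmooth} one must ensure that the only $T$-dependence surviving is the harmless $T^{1/(4d)}$ rather than a positive power that would blow up as $T\sim c^{2(r-1)}\to\infty$; this is exactly why one must work with the scale-invariant smoothing estimate rather than any energy bound, and why the cube-summation in the maximal-function step has to be organized so as not to reintroduce a factor of $T$.
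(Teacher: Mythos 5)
Your outline is correct and is, in substance, the argument the paper itself relies on: the paper's proof of this lemma is a one-line sketch asserting that the estimates follow by rescaling Lemmas 3--6 of \cite{hao2007well} (which are the Kenig--Ponce--Vega-type local smoothing, dual/retarded smoothing and maximal function estimates for the fourth-order Schr\"odinger group, following \cite{kenig1993small}) and, for $r>2$, by replacing $\cU_2,\cA_2$ with $\cU_r,\cA_r$. Everything you describe --- stripping the gauge factor, Plancherel in the phase variable $\theta=P_r(|\xi|)$ with the Jacobian $1/P_r'$ producing the factor $c^{r-1}$, Van der Corput near the degenerate spheres located by Remark \ref{genrcase}, Cauchy--Schwarz over cubes for the $R^{1/2}$, duality and Christ--Kiselev for \eqref{homlocsmooth2}--\eqref{inhomlocsmooth}, and Sobolev-in-time plus dyadic summation for \eqref{maxfun} --- is precisely the content of those cited lemmas with the $c$-dependence tracked, so you have supplied the detail the paper delegates to the citation.
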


\begin{proof}[Proof (sketch)]
The proof in the case $r=2$ can be obtained simply by rescaling Lemma 3, Lemma 4, Lemma 5 and Lemma 6 of \cite{hao2007well}. The proof in the case $r>2$ can be obtained by considering the operator $\cU_r(t)$ and $\cA_r(t)$ instead of $\cU_2(t)$ and $\cA_2(t)$.
\end{proof}

\begin{proof}[Proof (Proposition \ref{LWPhighordschr}, part $(ii)$, case $r=2$)] 
We will prove the result only for $s=s_0$, since the general case follows from commutator estimates. For simplicity, we only deal with the case 
\begin{align*}
P( (\d^\alpha_x\psi)_{|\alpha| \leq 2}, (\d^\alpha_x\bar{\psi})_{|\alpha| \leq 2} ) &= \d_{x_j}^2\psi \, \d_{x_k}^2\psi \, \d_{x_m}^2\psi.
\end{align*}
More precisely, we fix a positive constant $\nu<1/3$, and we define the space $Z_I^\delta$ of all function $\phi:I \times \R^d \to \C$ such that the following three conditions hold
\begin{align} 
\|\phi\|_{L^\infty(I) H^{s_0}} &\leq c^{-\delta}, \label{ZIdelta1} \\
\sum_{|\beta|=s_0+1/2} \sup_{\alpha \in \Z^d} \left( \int_I \int_{Q_\alpha} |\d^\beta_x\phi(t,x)|^2 \di x \di t \right)^{1/2} &\leq T^\nu, \label{ZIdelta2} \\
\left( \sup_{t \in I} \sup_{x \in Q_\alpha} |D^2_x\phi(t,x)|^2 \right)^{1/2} &\leq c^{-\delta}. \label{ZIdelta3}
\end{align}
We want to show that the map 
\begin{align*}
\cF &: Z_I^\delta \to Z_I^\delta, \\
&\psi(t) \mapsto \cU_2(t)\psi_0+i \cA_2P( (\d^\alpha_x\psi)_{|\alpha| \leq 2}, (\d^\alpha_x\bar{\psi})_{|\alpha| \leq 2} )
\end{align*}
is a contraction mapping. \\

We can observe that for any $\beta \in \Z^d$ with $|\beta|=s_0-\frac{3}{2}$
\begin{align*}
\d_x^\beta(\d_{x_j}^2\psi \, \d_{x_k}^2\psi \, \d_{x_m}^2\psi) &= \d_x^\beta\d_{x_j}^2\psi \, \d_{x_k}^2\psi \, \d_{x_m}^2\psi + \d_{x_j}^2\psi \, \d_x^\beta\d_{x_k}^2\psi \, \d_{x_m}^2\psi + \d_{x_j}^2\psi \, \d_{x_k}^2\psi \, \d_x^\beta\d_{x_m}^2\psi \nonumber \\
&\; \; \; + R((\d_x^\gamma\psi)_{2\leq|\gamma|\leq s_0-1/2}).
\end{align*}
Now, for any $\psi \in Z_I^\delta$ we have
\begin{align}
&\sum_{|\beta|=s_0+1/2} \sup_{\alpha \in \Z^d} \left( \int_I \int_{Q_\alpha} |\d^\beta_x\psi(t,x)|^2 \di x \di t \right)^{1/2} \nonumber \\
&\sleq \sum_{|\beta|=s_0+1/2} \sup_{\alpha \in \Z^d} \left( \int_I \int_{Q_\alpha} |\cU_2(t)\d^\beta_x\psi_0(x)|^2 \di x \di t \right)^{1/2} \nonumber \\
&\; \; \; + \sum_{|\beta|=s_0+1/2} \sup_{\alpha \in \Z^d} \left( \int_I \int_{Q_\alpha} \left| \int_0^t \cU_2(t-\tau) \d_x^\beta(\d_{x_j}^2\psi \, \d_{x_k}^2\psi \, \d_{x_m}^2\psi) \di\tau \right|^2 \di x \di t \right)^{1/2} \nonumber \\
&\stackrel{\eqref{homlocsmooth1},\eqref{homlocsmooth2}}{\sleq} c T^{1/3} \|\psi_0\|_{H^{s_0}} + c^2 T^{1/(4d)} \sum_{|\beta_0|=s_0-3/2} \sum_{j,k,m=1}^d \sum_{\alpha \in \Z^d} \| \d_x^\beta\d_{x_j}^2\psi \, \d_{x_k}^2\psi \, \d_{x_m}^2\psi \|_{L^2_x(Q_\alpha; L^2_t(I))} \nonumber \\
&\; \; \; + c^2 \int_0^T \|D_x^{1/2} R((\d_x^\gamma\psi)_{2\leq|\gamma|\leq s_0-1/2}) \|_{L^2} \di t \nonumber \\
&\sleq c T^{1/3} \|\psi_0\|_{H^{s_0}} + c^2 T^{1/(4d)} \sum_{|\beta_0|=s_0+1/2} \sup_{\alpha \in \Z^d} \left( \int_I \int_{Q_\alpha} |\d_x^\beta\psi|^2 \di x \di t \right)^{1/2} \left( \sum_{\alpha \in \Z^d} \sup_{t \in I} \sup_{x \in Q_\alpha} |D_x^2\psi|^2 \right) \nonumber \\
&\; \; \; + c^2 T \sup_{t \in I} \|\psi\|_{H^{s_0}}^3 \nonumber \\
&\sleq c^{1-\delta} T^{1/3} + c^2 T^{1/(4d)} T^\nu c^{-2\delta} +  c^2 T c^{-3\delta} \nonumber \\
&\leq T^\nu, \label{ZIdeltaterm2}
\end{align}
where in the last inequality we have chosen $\delta \gg 1$ such that
\begin{align} \label{ZIdeltaterm2cond}
c^{1-\delta} T^{-\nu+1/3} + c^{2(1-\delta)} T^{1/(4d)} +  c^{2-3\delta} T^{1-\nu} &\sleq 1, \; \; T =\cO(c^{2(r-1)}). 
\end{align}

Next, we have that for any $\psi \in Z_I^\delta$
\begin{align}
\|\psi\|_{L^\infty(I)H^{s_0}} &\leq \|\psi_0\|_{H^{s_0}} + \sup_{t \in I} \int_0^t \|\cU_2(t-\tau) d_{x_j}^2\psi(\tau) \, \d_{x_k}^2\psi(\tau) \, \d_{x_m}^2\psi(\tau) \|_{L^2} \di \tau \nonumber \\
&\; \; \; + \sup_{t \in I} \left\| D_x^{3/2} \int_0^t \cU_2(t-\tau) D_x^{s_0-3/2}d_{x_j}^2\psi(\tau) \, \d_{x_k}^2\psi(\tau) \, \d_{x_m}^2\psi(\tau) \di \tau  \right\|_{L^2} \nonumber \\
&\stackrel{\eqref{homlocsmooth2}}{\sleq} \|\psi_0\|_{H^{s_0}} + T \sup_{t \in I} \| \d_{x_j}^2\psi(t) \, \d_{x_k}^2\psi(t) \, \d_{x_m}^2\psi(t) \|_{L^2} \nonumber \\
&\; \; \; + c^{r-1} \sum_{\alpha \in \Z^d} \left( \int_{Q_\alpha} \int_I |D_x^{s_0-3/2}( \d_{x_j}^2\psi(t) \, \d_{x_k}^2\psi(t) \, \d_{x_m}^2\psi(t) )|^2\di t \di x \right)^{1/2} \nonumber \\
&\sleq \|\psi_0\|_{H^{s_0}} + T \sup_{t \in I} \|\psi\|^3_{H^{\frac{d}{3}+2}} \nonumber \\
&\; \; \; + c \sum_{j,k,m=1}^d \sum_{\alpha \in \Z^d} \left( \int_{Q_\alpha} \int_I |D_x^{s_0-3/2}\d_{x_j}^2\psi(t) \, \d_{x_k}^2\psi(t) \, \d_{x_m}^2\psi(t) |^2\di t \di x \right)^{1/2} \nonumber \\
&\; \; \; + c \sum_{\alpha \in \Z^d} \left( \int_{Q_\alpha} \int_I |R(D_x^\gamma\psi)_{2\leq|\gamma|\leq s_0-1/2}|^2 \di t \di x \right)^{1/2} \nonumber \\
&\sleq \|\psi_0\|_{H^{s_0}} + T \|\psi\|^3_{L^\infty(I)H^{\frac{d}{3}+2}} \nonumber \\
&\; \; \;+ c \sum_{|\beta|=s_0+1/2} \sup_{\alpha \in \Z^d} \left( \int_I \int_{Q_\alpha} |\d_x^\beta\psi|^2 \di x \di t \right)^{1/2} \; \sum_{\alpha \in \Z^d} \sup_{t \in I} \sup_{x \in Q_{\alpha}} |D_x^2\psi|^2 \nonumber \\
&\; \; \; + c T^{1/2} \|\psi\|^3_{L^\infty(I) H^{s_0}} \nonumber \\
&\sleq \|\psi_0\|_{H^{s_0}}  +  (T+cT^{1/2}) c^{-3\delta} + cT^\nu c^{-2\delta} \nonumber \\
&\sleq c^{-\delta}, \label{ZIdeltaterm1}
\end{align}
where in the last inequality we have chosen $\delta \gg 1$ such that
\begin{align} \label{ZIdeltaterm1cond}
(T+cT^{1/2}) c^{-3\delta} + T^\nu c^{1-2\delta}  &\sleq \frac{1}{2}, \; \; T =\cO(c^{2(r-1)}). 
\end{align}

Then, we have that for any $\psi \in Z_I^\delta$
\begin{align}
&\left( \sum_{\alpha \in \Z^d} \sup_{t \in I} \sup_{x \in Q_\alpha} |D^2_x\psi(t,x)|^2 \right)^{1/2} \nonumber \\
&\stackrel{T \sleq c^{2(r-1)},\eqref{maxfun} }{\sleq} \|\psi_0\|_{H^{s_0}} + c^{d\left(1-\frac{1}{r}\right)} T \|\psi\|^3_{L^\infty(I)H^{s_0}} \nonumber \\
&\sleq \|\psi_0\|_{H^{s_0}} + c^{d\left(1-\frac{1}{r}\right)} T c^{-3\delta} \nonumber \\
&\sleq c^{-\delta}, \label{ZIdeltaterm3}
\end{align}
where in the last inequality we have chosen $\delta \gg 1$ such that
\begin{align} \label{ZIdeltaterm3cond}
c^{d\left(1-\frac{1}{r}\right)-2\delta} T   &\sleq 1, \; \; T =\cO(c^{2(r-1)}).
\end{align}

Finally, if for any $\phi \in Z_I^\delta$ we set $\Lambda_T(\phi)$ as the maximum between the three following quantities,
\begin{equation*}
\sum_{\alpha \in \Z^d} \sup_{t \in I} \sup_{x \in Q_\alpha} |D^2_x\psi(t,x)|^2,
\end{equation*}
\begin{equation*}
\|\phi\|_{L^\infty(I)H^{s_0}}, 
\end{equation*}
\begin{equation*}
c^{-\delta} T^{-\nu} \sum_{|\beta|=s_0+1/2} \sup_{\alpha \in \Z^d} \left( \int_I \int_{Q_\alpha} |\d^\beta_x\phi(t,x)|^2 \di x \di t \right)^{1/2},
\end{equation*}
we can observe that for any $\phi_1,\phi_2 \in Z_I^\delta$
\begin{align*}
\Lambda_T( \cF(\phi_1)-\cF(\phi_2) ) &\leq K T^\nu c^{-2\delta} \Lambda_T(\phi_1-\phi_2),
\end{align*}
where $K$ is a positive constant which does not depend on $c$. Hence, if we choose $\delta \gg 1$ such that \eqref{ZIdeltaterm1cond}, \eqref{ZIdeltaterm2cond}, \eqref{ZIdeltaterm3cond} and 
\begin{align} \label{ZIdeltacontrcond}
K T^\nu c^{-2\delta} &\leq \frac{1}{2}
\end{align}
hold true, we can conclude.
\end{proof}

\section{Long time approximation of radiation solutions} \label{longtappr}

Now we want to exploit the result of the previous section 
in order to deduce some consequences about the dynamics of the NLKG 
equation \eqref{dsa} on $M=\R^d$, $d \geq 2$, in the nonrelativistic limit. \\

Consider the \emph{simplified system}, that is the Hamiltonian $H_r$ 
in the notations of Theorem \ref{normformgavthm}, where we neglect the 
remainder:
\begin{align*}
H_{simp} &:= h_0+\epsilon(h_1+ \la F_1 \ra)+ \sum_{j=2}^{r} \epsilon^j(h_j+Z_j).
\end{align*}
We recall that in the case of the NLKG the simplified system is 
actually the NLS (given by $h_0+\epsilon(h_1+ \la F_1 \ra)$), 
plus higher-order normalized corrections. Now let $\psi_r$ be a solution of  
\begin{align} \label{simpleq}
-i \,\dot \psi_r \, &= \, X_{H_{simp}}(\psi_r),
\end{align}
then $\psi_a(t,x):=\cT^{(r)}(\psi_r(c^2t,x))$ solves
\begin{align} \label{appreq}
\dot \psi_a&= i c\jap{\grad}_c \psi_a + 
\frac{\lambda}{2l} \left( \frac{c}{\jap{\grad}_c} \right)^{1/2} \, 
\left[ \left( \frac{c}{\jap{\grad}_c} \right)^{1/2} \frac{\psi_a+\bar\psi_a}{\sqrt{2}} \right]^{2l-1}
- \frac{1}{c^{2r}} X_{ \cT^{(r)*}\cR^{(r)} }(\psi_a,\bar\psi_a),
\end{align}
that is, the NLKG plus a remainder of order $c^{-2r}$ (in the following 
we will refer to equation \eqref{appreq} as \emph{approximate equation}, 
and to $\psi_a$ as the \emph{approximate solution} of the original NLKG). 
We point out that the original NLKG and the approximate equation differ 
only by a remainder of order $c^{-2r}$, which is evaluated on the approximate 
solution. This fact is extremely important: indeed, if one can prove the 
smoothness of the approximate solution (which often is easier to check 
than the smoothness of the solution of the original equation), 
then the contribution of the remainder may be considered small 
in the nonrelativistic limit. This property is rather general, 
and has been already applied in the framework of normal form theory 
(see for example \cite{bambusi2002nonlinear}). \\
\indent Now let $\psi$ be a solution of the NLKG equation \eqref{dsa} 
with initial datum $\psi_0$, and let $\delta:=\psi-\psi_a$ be the error 
between the solution of the approximate equation and the original one. 
One can check that $\delta$ fulfills 
\begin{align*}
\dot \delta &= i c \jap{\grad}_c \delta +
[ P(\psi_a+\delta,\bar\psi_a+\bar\delta)-P(\psi_a,\bar\psi_a) ]+
\frac{1}{c^{2r}} X_{ \cT^{(r)*}\cR^{(r)} }(\psi_a(t),\bar\psi_a(t) ),
\end{align*}
where 
\begin{align} \label{nonlin}
P(\psi,\bar\psi)&= \frac{\lambda}{2l} \left( \frac{c}{\jap{\grad}_c} \right)^{1/2}
\left[ \left( \frac{c}{\jap{\grad}_c} \right)^{1/2} \frac{\psi+\bar\psi}{\sqrt{2}} \right]^{2l-1}. 
\end{align}
Thus we get
\begin{align}
\dot \delta &= i \,c \jap{\grad}_c\delta + dP(\psi_a(t))\delta + \cO(\delta^2) + \cO\left(\frac{1}{ c^{2r} }\right); \nonumber \\
\delta(t)&= e^{itc\jap{\grad}_c}\delta_0 + 
\int_0^{t}e^{i(t-s)c\jap{\grad}_c}dP(\psi_a(s))\delta(s)\di s +
\cO(\delta^2)+\cO\left(\frac{1}{c^{2r}}\right). \label{erreqrem}
\end{align}
By applying Gronwall inequality to \eqref{erreqrem} we can obtain an approximation result which is valid only locally uniformly in time, namely up to times of 
order $\cO(1)$ (see Theorem 2.3 of \cite{pasquali2017dynamics}). \\

Observe that the evolution of the error $\delta$ between the approximate 
solution $\psi_a$, namely the solution of \eqref{appreq}, and the original 
solution $\psi$ of \eqref{dsa} is described by
\begin{align} \label{erreq}
\dot\delta(t)&= i \, c\jap{\grad}_c\delta(t) + dP(\psi_a(t))\delta(t); \\
\delta(t) &= e^{itc\jap{\grad}_c}\delta_0+\int_0^{t}e^{i(t-s)c\jap{\grad}_c}dP(\psi_a(s))\delta(s)\di s,
\end{align}
up to a remainder which is small, if we assume the smoothness of $\psi_a$. \\

Now we study the evolution of the error for long (that means, $c$-dependent) 
time intervals. 

We pursue such a program by a perturbative argument, considering a small 
radiation solution $\psi_r=\eta_{rad,r}$ of the normalized system 
\eqref{simpleq} that exists up to times of order $\cO(c^{2(r-1)})$, $r>1$. \\

As an application of Proposition \ref{strlin}, we consider the following 
case. Fix $r>1$, let $\sigma>0$ and let $\psi_r = \eta_{rad}$ be a radiation 
solution of \eqref{simpleq}, namely such that 
\begin{align} \label{hyprad}
\eta_{rad,0}:=\eta_{rad}(0) &\in H^{k+k_0+\sigma+d/2}(\R^d),
\end{align}
where $k_0>0$ and $k \gg 1$ are the ones in Theorem \ref{normformgavthm}.

Let $\delta(t)$ be a solution of \eqref{erreq}; then by Duhamel formula
\begin{align}
\delta(t) &:= \cU(t,0) \delta_0 = e^{it c\jap{\grad}_c}\delta_0 
+\int_0^t e^{i(t-s) c\jap{\grad}_c} \di P(\psi_a(s)) \cU(s,0) \delta_0 \di s. \label{duhamerr}
\end{align}
Now fix $T\sleq c^{2(r-1)}$; we want to estimate the local-in-time norm 
in the space $L^\infty([0,T])H^{k}(\R^d)$ of the error $\delta(t)$. 

By \eqref{strestkg} we can estimate the first term. We can estimate the 
second term by \eqref{retstrkg}: hence for any $(p,q)$ 
Schr\"odinger-admissible exponents

\begin{align*}
& \left\| \int_0^{t}e^{i(t-s) c\jap{\grad}_c}dP(\psi_a(s)) \delta(s) \di s \right\|_{L^\infty_t([0,T]) H^{k}_x} \\
&\sleq c^{\frac{1}{q}-\frac{1}{p}-\frac{1}{2}} \| \jap{\grad}_c^{\frac{1}{p}-\frac{1}{q}+\frac{1}{2}} \di P(\psi_a(t)) \delta(t) \|_{L^{p'}_t([0,T]) W^{k,q'}_x} \\
&\sleq c^{\frac{1}{q}-\frac{1}{p}-\frac{1}{2}} \| \jap{\grad}_c^{\frac{1}{p}-\frac{1}{q}+\frac{1}{2}}  \di P(\eta_{rad}(c^2t)) \delta(t) \|_{L^{p'}_t([0,T]) W^{k,q'}_x}  \\
&\; \; \; + c^{\frac{1}{q}-\frac{1}{p}-\frac{1}{2}} \| \jap{\grad}_c^{\frac{1}{p}-\frac{1}{q}+\frac{1}{2}}  [\di P(\psi_a(t))-\di P(\eta_{rad}(c^2t)) ] \delta(t) \|_{L^{p'}_t([0,T]) W^{k,q'}_x} \\
&=: I_p + II_p,
\end{align*}

but recalling \eqref{nonlin} one has that

\begin{align*}
I_p &\sleq \frac{|\lambda|}{2^{l-1/2}(2l)(2l-1)} c^{\frac{1}{q}-\frac{1}{p}+\frac{1}{2}}
\left\| \jap{\grad}_c^{\frac{1}{p}-\frac{1}{q}-\frac{1}{2}} 
\left[ \left( \frac{c}{\jap{\grad}_c} \right)^{1/2} (\eta_{rad}+\bar\eta_{rad}) \right]^{2(l-1)}
\, \delta(t) \right\|_{L^{p'}_t([0,T]) W^{k,q'}_x}.
\end{align*}

Now fix a real number $\rho \in ]0,1[$, and choose
\begin{align}
p &= 2+\rho, \label{pexp} \\
q=\frac{2dp}{dp-4} &= \frac{4d+2d\rho}{2d+d\rho-4} = 2+\frac{8}{2d-4+d\rho}, \label{qexp}
\end{align}
we get (since $\|(c/\jap{\grad}_c)^{\frac{1}{q}-\frac{1}{p}-\frac{1}{2}}\|_{L^{q'} \to L^{q'}} \leq 1$)
\begin{align*}
I_{2} &\leq \frac{|\lambda|}{2^{l-1/2}(2l)(2l-1)} 
\left\| \left[ \left( \frac{c}{\jap{\grad}_c} \right)^{1/2} (\eta_{rad}(c^2 t)+\bar\eta_{rad}(c^2 t)) \right]^{2(l-1)}
\, \delta(t) \right\|_{L^{ 2-\frac{\rho}{1+\rho} }_t([0,T]) W^{k,q'}_x}.
\end{align*}

Now, since by H\"older inequality
\begin{align*}
&\left\| \left[ \left( \frac{c}{\jap{\grad}_c} \right)^{1/2} (\eta_{rad}(c^2 t)+\bar\eta_{rad}(c^2 t)) \right]^{2(l-1)}
\, \delta(t) \right\|_{L^{ 2-\frac{\rho}{1+\rho}  }_t([0,T]) W^{k,q'}_x} \\
&\leq \left\| \left[ \left( \frac{c}{\jap{\grad}_c} \right)^{1/2} (\eta_{rad}(c^2 t)+\bar\eta_{rad}(c^2 t)) \right]^{2(l-1)} \right\|_{L_t^{ 2-\frac{\rho}{1+\rho} }([0,T]) W^{k,d(1+\rho/2)}_x} 
\, \|\delta(t)\|_{L^\infty_t([0,T]) H^{k}_x},
\end{align*}
and by Sobolev product theorem 
(recall that $l \geq 2$, and that $k \gg 1$) we can deduce that
\begin{align*}
& \left\| \left[ \left( \frac{c}{\jap{\grad}_c} \right)^{1/2} (\eta_{rad}(c^2 t)+\bar\eta_{rad}(c^2 t)) \right]^{2(l-1)} \right\|_{L_t^{ 2-\frac{\rho}{1+\rho} }([0,T]) W^{k,d(1+\rho/2)}_x} \\
&\leq  \left[ \int_0^T 
\left\| \left[ \left( \frac{c}{\jap{\grad}_c} \right)^{1/2} (\eta_{rad}(c^2 t)+\bar\eta_{rad}(c^2 t)) \right] \right\|_{W^{k,d(1+\rho/2)}_x }^{2(l-1)\left( 2-\frac{\rho}{1+\rho} \right)} \di t \right]^{\frac{1}{\left( 2-\frac{\rho}{1+\rho} \right)}} \\
&\leq \left\|  \eta_{rad}(c^2 t)+\bar\eta_{rad}(c^2 t) \right\|^{2(l-1)}_{L^{2(l-1)\left( 2-\frac{\rho}{1+\rho} \right)}_t([0,T]) W^{k,d(1+\rho/2)}_x} ,
\end{align*}
but since by Proposition \ref{modspaceprop} we have that for any $\sigma>0$
\begin{align*}
L^{2(l-1)\left( 2-\frac{\rho}{1+\rho} \right)}_t([0,T]) W^{k,d(1+\rho/2)}_x &\supseteq L^{2(l-1)\left( 2-\frac{\rho}{1+\rho} \right)}_t([0,T]) M^k_{d(1+\rho/2),1,x} \\
&\supseteq L^{2(l-1)\left( 2-\frac{\rho}{1+\rho} \right)}_t([0,T]) M^k_{2,1,x} \\
&\supseteq L^{2(l-1)\left( 2-\frac{\rho}{1+\rho} \right)}_t([0,T]) H^{k+\sigma+d/2}_x \\
&\supseteq L^{\infty}_t([0,T]) H^{k+\sigma+d/2}_x,
\end{align*}
we have that 
\begin{align} \label{estrad}
\| \eta_{rad} \|^{2(l-1)}_{L^{2(l-1)\left( 2-\frac{\rho}{1+\rho} \right)}_t([0,T]) W^{k,d(1+\rho/2)}_x} &\sleq T^{\frac{1+\rho}{2+\rho}} \| \eta_{rad} \|^{2(l-1)}_{ L^\infty_t([0,T])H^{k+\sigma+d/2}_x },
\end{align}
but by Corollary \ref{LWPBNFr} the right-hand side of \eqref{estrad} 
is finite and does not depend on $c \geq 1$ for 
\begin{align}
\|\eta_{rad,0} \|_{H^{k+k_0+\sigma+d/2}_x} &\sleq c^{-\alpha}, \label{extrahyp1} \\
\alpha &> \max\left( \delta_0(d,l,r), \delta_1(d,l,r) , \frac{r-1}{l-1}  \right) := \alpha^\ast(d,l,r) . \label{extrahyp2}
\end{align}
where $c \geq c_0$ is sufficiently large, and where 
$\delta_0(d,l,r)$  and $\delta_1(d,l,r)$ are defined in Corollary \ref{LWPBNFr}. \\

Furthermore, via \eqref{CTthm} one can show that there exists
$c_{r,k}>0$ sufficiently large such that for $c \geq c_{r,k}$ 
the term $II_{2}$ can be bounded by $\frac{1}{c^2} \; I_2$. \\

This means that we can estimate the $L^\infty([0,T])H^{k}$ norm of the error 
only for a small (with respect to $c$) radiation solution, 
which is the statement of Proposition \ref{NLKGtoNLSrad}.

To summarize, we get the following result.

\begin{proposition} \label{NLKGtoNLSrad}
Consider \eqref{NLKGhamnew} on $\R^d$, $d \geq 2$. 
Let $r>1$, and fix $k_1 \gg 1$. 
Assume that $l \geq 2$ and $r < \frac{d}{2}(l-1)$. 
Then $\exists$ $k_0=k_0(r)>0$ such that for any $k \geq k_1$ 
and for any $\sigma>0$ the following holds: 
consider the solution $\eta_{rad}$ of \eqref{simpleq} 
with initial datum $\eta_{rad,0} \in H^{k+k_0+\sigma+d/2}(\R^d)$, and 
call $\delta$ the difference between the solution of 
the approximate equation \eqref{appreq} and the original solution of 
the Hamilton equation for \eqref{NLKGhamnew}. 
Assume that $\delta_0:=\delta(0)$ satisfies
\[ \|\delta_0\|_{H^k_x} \sleq \frac{1}{c^2}. \]
Then there exist $\alpha^\ast:=\alpha^\ast(d,l,r)>0$ 
and there exists  $c^\ast:=c^\ast(r,k) > 1$, 
such that for any $\alpha> \alpha^\ast$ and for any $c > c^\ast$, 
if $\eta_{rad,0}$ satisfies
\begin{align*}
\|\eta_{rad,0}\|_{H^{k+k_0+\sigma+d/2}} &\sleq c^{-\alpha},
\end{align*}
then
\begin{align*}
\sup_{t\in [0,T]} \|\delta(t)\|_{H^k_x} &\sleq \frac{1}{c^2}, \; \; T \sleq c^{2(r-1)}.
\end{align*}
\end{proposition}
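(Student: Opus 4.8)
The plan is to work with the Duhamel representation \eqref{duhamerr} of the error, corrected by the remainder coming from \eqref{appreq}: since $\psi=\psi_a+\delta$ solves the NLKG only up to the term $\tfrac{1}{c^{2r}}X_{\cT^{(r)*}\cR^{(r)}}(\psi_a)$, one has
\[
\delta(t)=e^{itc\jap{\grad}_c}\delta_0+\int_0^t e^{i(t-s)c\jap{\grad}_c}\,\di P(\psi_a(s))\,\delta(s)\,\di s+\cO(\delta^2)+\frac{1}{c^{2r}}\int_0^t e^{i(t-s)c\jap{\grad}_c}X_{\cT^{(r)*}\cR^{(r)}}(\psi_a(s))\,\di s .
\]
First I would dispose of the two easy pieces. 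The free evolution is bounded in $L^\infty_t([0,T])H^k_x$ by $\|\delta_0\|_{H^k_x}\sleq c^{-2}$, using the energy estimate of Proposition \ref{strlin} (the case $p=\infty$, $q=2$, lifted to $W^{k,2}_x$ since $\jap{\grad}$ and $\jap{\grad}_c$ commute). The remainder integral is controlled by the same energy estimate together with the uniform bound $\sup_s\|X_{\cR^{(r)}}(\psi_a(s))\|_{H^k}\leq C_k$ from \eqref{Remthm} --- legitimate because $\eta_{rad}\in H^{k+k_0+\sigma+d/2}$ and $\cT^{(r)}$ is close to the identity by \eqref{CTthm} --- which yields a bound $\sleq c^{-2r}\,T\sleq c^{-2}$ on $T\sleq c^{2(r-1)}$.

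The core of the proof is the estimate of the Duhamel term $\int_0^t e^{i(t-s)c\jap{\grad}_c}\di P(\psi_a(s))\delta(s)\,\di s$, and I would carry it out exactly along the lines of the computation preceding this statement. One applies the retarded Strichartz estimate \eqref{retstrkg} with a Schr\"odinger-admissible pair $(p,q)$, splits $\di P(\psi_a)=\di P(\eta_{rad}(c^2\cdot))+[\di P(\psi_a)-\di P(\eta_{rad}(c^2\cdot))]$ into the contributions $I_p$ and $II_p$, and bounds $II_p$ by $c^{-2}I_p$ for large $c$ via \eqref{CTthm}. In $I_p$ one recognizes $\di P$ as multiplication by $\big[(c/\jap{\grad}_c)^{1/2}(\eta_{rad}+\bar\eta_{rad})\big]^{2(l-1)}$, up to a constant and one further factor $(c/\jap{\grad}_c)^{1/2}$; with the choice $p=2+\rho$, $q=\tfrac{2dp}{dp-4}$ for a small $\rho\in(0,1)$ the powers of $\jap{\grad}_c$ generated by \eqref{retstrkg} combine with the accompanying power of $c$ to leave a bounded Fourier multiplier on $L^{q'}$, so no net power of $c$ survives at this stage. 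H\"older in time then peels off $\|\delta\|_{L^\infty_t([0,T])H^k_x}$, producing a time factor $T^{(1+\rho)/(2+\rho)}$ and the potential norm $\|\eta_{rad}\|_{L^{2(l-1)(2-\rho/(1+\rho))}_t([0,T])W^{k,d(1+\rho/2)}_x}^{2(l-1)}$.

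That potential norm I would estimate by the Sobolev product theorem (using $l\geq2$ and $k\gg1$) and the embeddings $W^{k,d(1+\rho/2)}_x\supseteq M^k_{d(1+\rho/2),1,x}\supseteq M^k_{2,1,x}\supseteq H^{k+\sigma+d/2}_x$ from Proposition \ref{modspaceprop}, reducing it to $\|\eta_{rad}\|_{L^\infty_t([0,T])H^{k+\sigma+d/2}_x}$; by Corollary \ref{LWPBNFr} this quantity is finite and bounded uniformly in $c\geq c_0$ whenever $\|\eta_{rad,0}\|_{H^{k+k_0+\sigma+d/2}}\sleq c^{-\alpha}$, $r<\tfrac{d}{2}(l-1)$, and $\alpha>\max(\delta_0(d,l,r),\delta_1(d,l,r))$. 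Collecting the factor $T^{(1+\rho)/(2+\rho)}$ with $T\sleq c^{2(r-1)}$, together with $\|\eta_{rad,0}\|^{2(l-1)}\sleq c^{-2(l-1)\alpha}$, one arrives at an inequality of the shape
\[
\sup_{t\in[0,T]}\|\delta(t)\|_{H^k_x}\sleq \frac{1}{c^2}+\theta(c)\sup_{t\in[0,T]}\|\delta(t)\|_{H^k_x},
\]
where $\theta(c)$ is a nonnegative power of $c$ times $c^{-2(l-1)\alpha}$; choosing $\rho$ suitably and then taking $\alpha$ large forces $\theta(c)<\tfrac12$, and the resulting threshold is exactly $\alpha^\ast(d,l,r)$ of \eqref{extrahyp2}, which in particular requires $\alpha>\tfrac{r-1}{l-1}$. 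The $\cO(\delta^2)$ and higher terms are handled by the same Strichartz estimates and contribute only $\cO(c^{-4})$ once the a priori bound $\sup\|\delta\|_{H^k}\sleq c^{-2}$ is inserted, so they are absorbed; a standard continuity (bootstrap) argument in $T$ then gives the conclusion for all $T\sleq c^{2(r-1)}$ and $c>c^\ast$.

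I expect the main obstacle to be precisely this accounting of powers of $c$. The retarded Strichartz estimate is lossy in $c$, the time window $[0,c^{2(r-1)}]$ is long, and H\"older in time only partially compensates, so one must choose the exponent $\rho$ (equivalently the admissible pair $(p,q)$) carefully and then verify that the smallness $c^{-\alpha}$ of the radiation solution, amplified to $c^{-2(l-1)\alpha}$ by the degree $2l-1\geq3$ of the nonlinearity, genuinely beats all the accumulated positive powers of $c$. The uniform-in-$c$ long-time well-posedness of $\eta_{rad}$ provided by Corollary \ref{LWPBNFr} --- and hence the restriction $r<\tfrac{d}{2}(l-1)$ --- is indispensable for closing this loop.
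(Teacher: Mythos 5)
Your proposal follows essentially the same route as the paper's proof: the Duhamel representation of the error, the retarded Strichartz estimate \eqref{retstrkg} with the admissible pair $p=2+\rho$, $q=2dp/(dp-4)$, the splitting into $I_p+II_p$ with $II_p\sleq c^{-2}I_p$ via \eqref{CTthm}, H\"older in time plus the Sobolev product theorem and the modulation-space embeddings of Proposition \ref{modspaceprop}, and Corollary \ref{LWPBNFr} to bound the radiation norm uniformly in $c$ under the smallness hypothesis $\|\eta_{rad,0}\|\sleq c^{-\alpha}$ with $\alpha>\alpha^\ast(d,l,r)$. Your explicit accounting of the $\cO(\delta^2)$ and $\cO(c^{-2r})$ remainder contributions and the closing continuity argument is simply a more careful write-up of steps the paper leaves implicit.
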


By exploiting \eqref{CTthm} and Proposition \ref{NLKGtoNLSrad}, 
we obtain Theorem \ref{NLKGtoNLSradthm}.

\bibliography{P_NLKG_2017_part2}

\begin{thebibliography}{HKNR18}

\bibitem[BCP02]{bambusi2002nonlinear}
Dario Bambusi, Andrea Carati, and Antonio Ponno.
\newblock {The nonlinear Schr{\"o}dinger equation as a resonant normal form}.
\newblock {\em Discrete and Continuous Dynamical Systems Series B},
  2(1):109--128, 2002.

\bibitem[BGT04]{burq2004strichartz}
Nicolas Burq, Pierre G{\'e}rard, and Nikolay Tzvetkov.
\newblock {Strichartz inequalities and the nonlinear Schr{\"o}dinger equation
  on compact manifolds}.
\newblock {\em American Journal of Mathematics}, 126(3):569--605, 2004.

\bibitem[BMS04]{bechouche2004nonrelativistic}
Philippe Bechouche, Norbert~J Mauser, and Sigmund Selberg.
\newblock {Nonrelativistic limit of Klein-Gordon-Maxwell to
  Schr{\"o}dinger-Poisson}.
\newblock {\em American Journal of Mathematics}, 126(1):31--64, 2004.

\bibitem[Bou10]{bouclet2010littlewood}
Jean-Marc Bouclet.
\newblock {Littlewood-Paley decompositions on manifolds with ends}.
\newblock {\em Bull. Soc. Math. France}, 138(1):1--37, 2010.

\bibitem[BP06]{bambusi2006metastability}
Dario Bambusi and Antonio Ponno.
\newblock {On metastability in FPU}.
\newblock {\em Communications in Mathematical Physics}, 264(2):539--561, 2006.

\bibitem[CLM15]{carles2015higher}
R{\'e}mi Carles, Wolfgang Lucha, and Emmanuel Moulay.
\newblock {Higher-order Schr{\"o}dinger and Hartree--Fock equations}.
\newblock {\em Journal of Mathematical Physics}, 56(12):122301, 2015.

\bibitem[CM12]{carles2012higher}
R{\'e}mi Carles and Emmanuel Moulay.
\newblock {Higher order Schr{\"o}dinger equations}.
\newblock {\em Journal of Physics A: Mathematical and Theoretical},
  45(39):395304, 2012.

\bibitem[CO06]{cho2006semirelativistic}
Yonggeun Cho and Tohru Ozawa.
\newblock {On the semirelativistic Hartree-type equation}.
\newblock {\em SIAM Journal on Mathematical Analysis}, 38(4):1060--1074, 2006.

\bibitem[CS16]{choi2016nonrelativistic}
Woocheol Choi and Jinmyoung Seok.
\newblock {Nonrelativistic limit of standing waves for pseudo-relativistic
  nonlinear Schr{\"o}dinger equations}.
\newblock {\em Journal of Mathematical Physics}, 57(2):021510, 2016.

\bibitem[CW90]{cazenave1990cauchy}
Thierry Cazenave and Fred~B Weissler.
\newblock {The Cauchy problem for the critical nonlinear Schr{\"o}dinger
  equation in $H^s$}.
\newblock {\em Nonlinear Analysis: Theory, Methods \& Applications},
  14(10):807--836, 1990.

\bibitem[DF08]{d2008strichartz}
Piero D'Ancona and Luca Fanelli.
\newblock {Strichartz and smoothing estimates for dispersive equations with
  magnetic potentials}.
\newblock {\em Communications in Partial Differential Equations},
  33(6):1082--1112, 2008.

\bibitem[FS14]{faou2014asymptotic}
Erwan Faou and Katharina Schratz.
\newblock {Asymptotic preserving schemes for the Klein--Gordon equation in the
  non-relativistic limit regime}.
\newblock {\em Numerische Mathematik}, 126(3):441--469, 2014.

\bibitem[Fuj16]{fujiwara1916obere}
Matsusabur{\^o} Fujiwara.
\newblock { {\"U}ber die obere Schranke des absoluten Betrages der Wurzeln
  einer algebraischen Gleichung}.
\newblock {\em Tohoku Mathematical Journal, First Series}, 10:167--171, 1916.

\bibitem[HHW07]{hao2007well}
Chengchun Hao, Ling Hsiao, and Baoxiang Wang.
\newblock {Well-posedness of Cauchy problem for the fourth order nonlinear
  Schr{\"o}dinger equations in multi-dimensional spaces}.
\newblock {\em Journal of Mathematical Analysis and Applications},
  328(1):58--83, 2007.

\bibitem[HKNR18]{han2017long}
Daniel Han-Kwan, Toan~T Nguyen, and Frederic Rousset.
\newblock {Long time estimates for the Vlasov-Maxwell system in the
  non-relativistic limit}.
\newblock {\em Communications in mathematical physics}, 363(2):389--434, 2018.

\bibitem[KAY12]{kim2012global}
JinMyong Kim, Anton Arnold, and Xiaohua Yao.
\newblock {Global estimates of fundamental solutions for higher-order
  Schr{\"o}dinger equations}.
\newblock {\em Monatshefte f{\"u}r Mathematik}, 168(2):253--266, 2012.

\bibitem[KPV93]{kenig1993small}
Carlos~E. Kenig, Gustavo Ponce, and Luis Vega.
\newblock {Small solutions to nonlinear Schr{\"o}dinger equations}.
\newblock {\em Annales de l'Institut Henri Poincare (C) Non Linear Analysis},
  10(3):255--288, 1993.

\bibitem[KS11]{kobayashi2011inclusion}
Masaharu Kobayashi and Mitsuru Sugimoto.
\newblock {The inclusion relation between Sobolev and modulation spaces}.
\newblock {\em Journal of Functional Analysis}, 260(11):3189--3208, 2011.

\bibitem[LZ16]{lu2016partially}
Yong Lu and Zhifei Zhang.
\newblock {Partially strong transparency conditions and a singular localization
  method in geometric optics}.
\newblock {\em Archive for Rational Mechanics and Analysis}, 222(1):245--283,
  2016.

\bibitem[Mac01]{machihara2001nonrelativistic}
Shuji Machihara.
\newblock {The nonrelativistic limit of the nonlinear Klein-Gordon equation}.
\newblock {\em FUNKCIALAJ EKVACIOJ SERIO INTERNACIA}, 44(2):243--252, 2001.

\bibitem[MN03]{masmoudi2003nonrelativistic}
Nader Masmoudi and Kenji Nakanishi.
\newblock {Nonrelativistic limit from Maxwell-Klein-Gordon and Maxwell-Dirac to
  Poisson-Schr{\"o}dinger}.
\newblock {\em International Mathematics Research Notices}, 2003(13):697--734,
  2003.

\bibitem[MN08]{masmoudi2008energy}
Nader Masmoudi and Kenji Nakanishi.
\newblock {Energy convergence for singular limits of Zakharov type systems}.
\newblock {\em Inventiones mathematicae}, 172(3):535--583, 2008.

\bibitem[MN10]{masmoudi2010klein}
Nader Masmoudi and Kenji Nakanishi.
\newblock {From the Klein--Gordon--Zakharov system to a singular nonlinear
  Schr{\"o}dinger system}.
\newblock {\em Annales de l'Institut Henri Poincare - Analyse Non Linéaire},
  27(4):1073--1096, 2010.

\bibitem[Naj90]{najman1990nonrelativistic}
Branko Najman.
\newblock {The nonrelativistic limit of the nonlinear Klein-Gordon equation}.
\newblock {\em Nonlinear Analysis: Theory, Methods \& Applications},
  15(3):217--228, 1990.

\bibitem[Pas17]{pasquali2017dynamics}
Stefano Pasquali.
\newblock {Dynamics of the nonlinear Klein-Gordon equation in the
  nonrelativistic limit, I}.
\newblock {\em arXiv preprint arXiv:1703.01609}, 2017.

\bibitem[RSW12]{ruzhansky2012modulation}
Michael Ruzhansky, Mitsuru Sugimoto, and Baoxiang Wang.
\newblock {Modulation spaces and nonlinear evolution equations}.
\newblock {\em Progress in Mathematics}, 301:267--283, 2012.

\bibitem[RWZ16]{ruzhansky2016global}
Michael Ruzhansky, Baoxiang Wang, and Hua Zhang.
\newblock {Global well-posedness and scattering for the fourth order nonlinear
  Schr{\"o}dinger equations with small data in modulation and Sobolev spaces}.
\newblock {\em Journal de Math{\'e}matiques Pures et Appliqu{\'e}es},
  105(1):31--65, 2016.

\bibitem[Tay11]{taylor2011partial}
ME~Taylor.
\newblock {Partial differential equations III. Nonlinear equations. Second.
  vol. 117}.
\newblock {\em Applied Mathematical Sciences. Springer, New York, pp. xxii},
  715, 2011.

\bibitem[Tsu84]{tsutsumi1984nonrelativistic}
Masayoshi Tsutsumi.
\newblock {Nonrelativistic approximation of nonlinear Klein-Gordon equations in
  two space dimensions}.
\newblock {\em Nonlinear Analysis: Theory, Methods \& Applications},
  8(6):637--643, 1984.

\bibitem[WH07]{wang2007global}
Baoxiang Wang and Henryk Hudzik.
\newblock {The global Cauchy problem for the NLS and NLKG with small rough
  data}.
\newblock {\em Journal of Differential Equations}, 232(1):36--73, 2007.

\bibitem[WHH09]{wang2009global}
Baoxiang Wang, Lijia Han, and Chunyan Huang.
\newblock {Global well-posedness and scattering for the derivative nonlinear
  Schr{\"o}dinger equation with small rough data}.
\newblock {\em Annales de l'Institut Henri Poincaré - Analyse Non Linéaire},
  26(6):2253--2281, 2009.

\end{thebibliography}
\bibliographystyle{alpha}

\end{document}